\documentclass[11pt]{article}

\usepackage[a4paper,left=24mm,right=24mm,top=23mm,bottom=27mm,headheight=15pt]{geometry}
\usepackage[T1]{fontenc}
\usepackage[utf8]{inputenc}
\usepackage{lmodern}
\usepackage{microtype}
\usepackage{amsmath}
\usepackage{amssymb}
\usepackage{amsthm}
\usepackage{mathrsfs}
\usepackage{graphicx}
\usepackage{enumitem}
\usepackage{booktabs}
\usepackage{siunitx}
\usepackage{tabularx}
\usepackage{array}
\usepackage{xcolor}
\usepackage{caption}
\usepackage{titlesec}
\usepackage{fancyhdr}
\usepackage[hidelinks]{hyperref}

\definecolor{metadata}{gray}{0.30}

\hypersetup{
  pdftitle={Stability and Dual Valuation of Contingent Claims under Rockafellian Perturbations},
  pdfauthor={Wolfgang Breytmann, Julio Deride, and Nicolas Hernandez},
  pdfkeywords={contingent claims, Rockafellian relaxation, epi-convergence, stochastic optimization, convex duality, stability analysis}
}

\setlist[enumerate]{leftmargin=.5in}
\setlist[itemize]{leftmargin=.5in}
\titleformat{\section}
  {\normalfont\large\bfseries}
  {\thesection}{0.75em}{}
\titleformat{\subsection}
  {\normalfont\normalsize\bfseries}
  {\thesubsection}{0.75em}{}
\titleformat{\subsubsection}
  {\normalfont\normalsize\bfseries}
  {\thesubsubsection}{0.75em}{}
\titlespacing*{\section}{0pt}{3.0ex plus 1ex minus .2ex}{1.4ex}
\titlespacing*{\subsection}{0pt}{2.4ex plus .8ex minus .2ex}{1.0ex}
\titlespacing*{\subsubsection}{0pt}{2.0ex plus .6ex minus .2ex}{0.8ex}

\theoremstyle{plain}
\newtheorem{theorem}{Theorem}[section]
\newtheorem{proposition}[theorem]{Proposition}
\newtheorem{lemma}[theorem]{Lemma}
\theoremstyle{definition}
\newtheorem{definition}[theorem]{Definition}
\theoremstyle{remark}
\newtheorem{remark}[theorem]{Remark}

\DeclareMathOperator*{\argmax}{arg\,max}
\DeclareMathOperator*{\argmin}{arg\,min}

\def\F{\mathcal{F}}
\def\R{\mathbb R}
\def\N{\mathbb N}

\def\to{\rightarrow}
\def\.{\cdot}

\def\<{\left<}
\def\>{\right>}
\newcommand{\nup}{^{\nu}}

\newcounter{case}

\begin{document}
\thispagestyle{empty}

\begin{center}
\vspace*{-1.2em}
{\LARGE\bfseries
\begin{minipage}{0.94\textwidth}
\centering
Stability and Dual Valuation of Contingent Claims\\[0.18em]
under Rockafellian Perturbations
\end{minipage}
}

\vspace{1.1em}
\rule{\textwidth}{0.55pt}
\vspace{1.0em}

\begin{tabularx}{\textwidth}{@{}>{\centering\arraybackslash}X >{\centering\arraybackslash}X >{\centering\arraybackslash}X@{}}
{\large\bfseries Wolfgang Breytmann}\textsuperscript{1}
& {\large\bfseries Julio Deride}\textsuperscript{2}
& {\large\bfseries Nicol\'as Hern\'andez}\textsuperscript{1} \\[0.45em]
\href{mailto:wolfgang.breytmann@usm.cl}{\texttt{wolfgang.breytmann@usm.cl}}
& \href{mailto:julio.deride@uai.cl}{\texttt{julio.deride@uai.cl}}
& \href{mailto:nicolas.hernandezs@usm.cl}{\texttt{nicolas.hernandezs@usm.cl}}
\end{tabularx}

\vspace{0.85em}
{\small
\textsuperscript{1} Departamento de Matem\'atica, Universidad T\'ecnica Federico Santa Mar\'ia, Valpara\'iso, Chile\\[0.2em]
\textsuperscript{2} Facultad de Ingenier\'ia y Ciencias, Universidad Adolfo Ib\'a\~nez, Santiago, Chile
}

\vspace{0.8em}
{\small\color{metadata}\today}
\end{center}

\vspace{0.9em}

\noindent
\begin{minipage}{\textwidth}
\rule{\textwidth}{0.35pt}
\vspace{0.55em}

\noindent{\bfseries Abstract.}
We study the stability of solutions to the discrete-time contingent-claim
problem over a finite investment horizon when uncertainty is modeled by
random variables with finite discrete support. Our main contribution is
to use Rockafellian perturbations as a framework for this stability
analysis: we construct perturbations of the underlying probability
distribution, of the contingent claim, and of both jointly, and we establish
epi-convergence of the corresponding approximating Rockafellians for the
primal problem. The associated hypo-convergent approximations yield
stable dual problems which, in turn, imply convergence of the dual
variables, interpreted as shadow prices. This analysis reveals a
connection between the duality gap and the value of perfect information
and it provides conditions under which strong duality holds. We also
construct examples in which epi-convergence fails due to critical
scenarios with vanishing probabilities but unbounded impacts, illustrating
the boundary between well-behaved and ill-conditioned contingent-claim
problems.

\vspace{0.65em}
\noindent{\bfseries Keywords.} Contingent claims, Rockafellian relaxation, epi-convergence, stochastic optimization, convex duality, stability analysis.

\vspace{0.35em}
\noindent{\bfseries MSC codes.} 90C15, 90C31, 49J53, 91G20.

\vspace{0.65em}
\noindent{\footnotesize\itshape Funding. W. Breytmann and N. Hern\'andez were partially supported by the Agencia Nacional de Investigaci\'on y Desarrollo (ANID) through FONDECYT Grant No.~11240944.}

\vspace{0.55em}
\rule{\textwidth}{0.35pt}
\end{minipage}

\vspace{1.15em}

\section{Introduction}

    Contingent claims are financial contracts whose terminal payoffs depend on the uncertain realization of future market states.
    The pricing of contingent claims plays a central role in modern financial economics and mathematical finance; see, for example,  \cite{karatzas1998methods,pennanen2012introduction}. Central to this field is the Fundamental Theorem of Asset Pricing (FTAP), which establishes a formal equivalence between the absence of arbitrage and the existence of an equivalent probability measure under which discounted asset prices exhibit martingale or supermartingale behavior (for instance, \cite{pulido2014fundamental,delbaen1999fundamental,bouchard2015arbitrage}). In this context, a classical problem in mathematical finance is that of a writer of a contingent claim seeking an optimal investment strategy to hedge future liabilities. This problem is naturally formulated within the framework of stochastic programming and convex duality \cite{king2002duality,pennanen2011convex,pinar2010expected,pinar2010gain,tian2006continuous}.
    However, classical models must often be extended to accommodate market frictions such as partial information, portfolio constraints, and short-selling prohibitions  \cite{Dahl,Dalang--Morton--Willinger}. For instance, regulatory measures or inherent market structures often prohibit short selling to stabilize markets during periods of financial distress, such as the temporary short-selling bans implemented across several European countries during the sovereign debt crisis of 2011 \cite{beber2021short}.
    Under no-short-selling  constraints, the condition of No Free Lunch with Vanishing Risk (NFLVR) is preserved only if there exists a measure under which the prices of restricted assets are supermartingales \cite{pulido2014fundamental,follmer1997optional}. This extension requires special duality frameworks to characterize  the dual solutions in incomplete or restricted markets \cite{Dahl,pennanen2011dual}.
    Despite the above, it is well known that stochastic optimization problems are inherently unstable. Stochastic optimization models are often ill-conditioned relative to the underlying uncertainty. Small errors in the probability distribution or minor shifts in the support of random variables can induce disproportionately large jumps in optimal solutions \cite{royset2024rockafellian,romisch2003stability}. This vulnerability is particularly acute when models are built from empirical data that may be corrupted by outliers or measurement errors \cite{antil2024rockafellian,antil2026risk,royset2024rockafellian}. In such contexts, traditional distributionally robust optimization focuses on pessimistic worst-case scenarios within an ambiguity set to provide performance guarantees \cite{antil2026risk,glasserman2014robust}.

    Rather than adopting a distributionally robust optimization framework, we employ the framework of Rockafellian perturbations \cite{royset2021good,rockafellar1998variational}. A Rockafellian function embeds the original optimization problem into a family of perturbed problems, anchored at a zero perturbation vector \cite{royset2021optimization,rockafellar1998variational,royset2024rockafellian}. While robust approaches restrict the feasible set to guard against model risk, this work utilizes an optimistic approach based on problem relaxation \cite{antil2026risk,antil2024rockafellian}. This means considering the best-case scenarios under the perturbation defined by the Rockafellian. This distributionally optimistic perspective allows the model to \emph{keep in play} decisions that might be optimal under the true, uncorrupted distribution, by relaxing constraints rather than tightening them, effectively mitigating the influence of adversarial data or endogenous outliers \cite{antil2026risk,royset2024rockafellian,deride2024approximations}.
    The stability of these optimistic formulations is established through variational analysis, primarily the concept of epi-convergence \cite{rockafellar1998variational,feinberg2022epi,royset2021optimization}. Epi-convergence ensures that, under tightness conditions, the minima and minimizers of approximating problems approach those of the actual problem \cite{royset2021optimization,royset2018approximations}. In this paper, we study the stability of solutions for the discrete-time contingent-claim problem over a finite horizon with finite discrete support. We extend this analysis to both primal and dual problems: the primal stability is examined through epi-convergence of Rockafellians, while the dual problem utilizes hypo-convergence to account for the maximization  \cite{deride2024approximations}. This dual approach is essential for quantifying shadow prices of information \cite{pennanen2018convex} and for studying the convergence behavior of the corresponding solutions.

    Our main contribution is methodological. We introduce the framework of Rockafellian perturbations into the contingent-claim pricing problem under portfolio constraints, and we establish a comprehensive stability analysis for both primal and dual formulations. We construct Rockafellian perturbations associated with changes in the underlying probability distribution, perturbations of the claim constraints, and joint perturbations of both components. For each setting, we establish the epi-convergence of the corresponding sequence of approximating Rockafellians.

    On the dual side, we derive explicit dual formulations, prove hypo-convergence of the associated dual functions, and study the stability of optimal dual solutions. This analysis provides an economic interpretation of the dual variables as shadow prices and reveals a connection between the duality gap and the value of perfect information through a wait-and-see perspective. In addition, we identify conditions under which strong duality holds and characterize situations where duality gaps arise.

    Beyond the positive stability results, we also investigate the limitations of the Rockafellian framework. In particular, we construct examples of unstable models in which epi-convergence fails, illustrating the boundary between well-behaved and ill-conditioned contingent-claim problems.

\subsection{Preliminaries}

We adopt the notation and terminology of \cite{royset2021optimization}. We work in the $n$-dimensional space $\mathbb{R}^n$, and write $x_j$ for the $j$-th component of a vector $x \in \mathbb{R}^n$. The usual inner product is denoted by $\langle \cdot, \cdot \rangle$. For any $m\in\N$, the simplex is denoted by $\Delta_{m-1} := \left \{p \in \R^m_+ : \sum_{j=1}^m p_j =1 \right \}$. The extended real line is denoted by $\bar{\mathbb{R}} := \mathbb{R} \cup \{-\infty, +\infty\}$. We adopt the standard conventions $\infty - \infty = \infty$ and $0 \cdot \infty = 0$. For a set $C \subset \mathbb{R}^n$, the \emph{indicator function} $\iota_C : \mathbb{R}^n \to \bar{\mathbb{R}}$ is defined by $\iota_C(x) = 0$ if $x \in C$ and $\iota_C(x) = \infty$ otherwise.

For a function $g : \mathbb{R}^n \to \bar{\mathbb{R}}$, its \emph{effective domain} and \emph{epigraph} are defined, respectively by
$\operatorname{dom} g := \{x \in \mathbb{R}^n \mid g(x) < \infty\}$, and $\operatorname{epi} g := \{(x,\alpha) \in \mathbb{R}^n \times \mathbb{R} \mid g(x) \le \alpha\}$.

We say that $g$ is \emph{proper} if $\operatorname{dom} g \neq \emptyset$ and $g(x) > -\infty$ for all $x \in \mathbb{R}^n$. We say that $g$ is \emph{lower semicontinuous (lsc)} if $\operatorname{epi} g$ is closed in $\mathbb{R}^n \times \mathbb{R}$. Equivalently, $g$ is lsc if and only if all its sublevel sets, $\{x \in \mathbb{R}^n : g(x) \le \alpha\}$, are closed for every $\alpha \in \mathbb{R}$. Another equivalent characterization is that $g$ is lsc if and only if, for every $x \in \mathbb{R}^n$ and every sequence $(x_k)_k \subset \mathbb{R}^n$ with $x_k \to x$, one has $ g(x) \le \liminf_{k \to \infty} g(x_k)$. We say that $g$ is convex if $\operatorname{epi} g$ is a convex set and $g$ is strongly convex, with modulus $\beta$, if the function $g(\cdot)-\frac{\beta}{2} \|\cdot\|^2$ is convex.

Consider a sequence of functions $\{g^\nu: \R^n \to \bar{\R},\nu\in\N\}$ and a function $g:\R^n\to\bar{\R}$.  We say that $g^\nu$ {\em epi-converges} to $g$, written  $g^\nu \xrightarrow{e} g$, if at every $x\in\R^n$, the following two conditions hold:
\begin{enumerate}
    \item \label{epi:liminf}
    For every $x \in \mathbb{R}^n$ and $(x^\nu)_{\nu \in \mathbb{N}} \subset \mathbb{R}^n$ such that $x^\nu \to x$, one has
   $\liminf_{\nu \to \infty} g^\nu(x^\nu) \geq g(x).$
    \item \label{epi:limsup}
    For every $x \in \mathbb{R}^n$, there exists $(x^\nu)_{\nu \in \mathbb{N}} \subset \mathbb{R}^n$ such that $x^\nu \to x$ and $\limsup_{\nu \to \infty} g^\nu(x^\nu) \leq g(x)$.
\end{enumerate}

We say that $g^\nu$ \emph{hypo-converges} to $g$, denoted by $g^\nu \xrightarrow{h} g$, if $-g^\nu \xrightarrow{e} -g$.

A sequence of functions $\{g^\nu : \mathbb{R}^n \to \overline{\mathbb{R}}\}$ is called \emph{tight} if for every $\varepsilon > 0$ there exist a compact set $B_\varepsilon \subset \mathbb{R}^n$ and an index $\nu_\varepsilon \in \mathbb{N}$ such that
\[
\inf_{x \in B_\varepsilon} g^\nu(x)
\le
\inf_{x \in \mathbb{R}^n} g^\nu(x) + \varepsilon,
\quad \text{for all } \nu \ge \nu_\varepsilon.
\]
Let $g : \mathbb{R}^n \to \overline{\mathbb{R}}$. The \emph{conjugate} and \emph{biconjugate} of $g$ are the functions $g^*, g^{**} : \mathbb{R}^n \to \overline{\mathbb{R}}$ defined by $g^*(y) := \sup_{x \in \mathbb{R}^n} \{ \langle x, y \rangle - g(x) \}$, and
$g^{**}(x) := \sup_{y \in \mathbb{R}^n} \{ \langle x, y \rangle - g^*(y) \}$.

We formulate our optimization problems using the concept of a Rockafellian function. Given a function $\varphi : \mathbb{R}^n \to \overline{\mathbb{R}}$, to be minimized, a function $f : \mathbb{R}^m \times \mathbb{R}^n \to \overline{\mathbb{R}}$ is called a \emph{Rockafellian} (for $\varphi$) if
$f(0,x) = \varphi(x)$, for all $x \in \mathbb{R}^n$.

For a fixed vector $y \in \mathbb{R}^m$, the \emph{tilted Rockafellian} associated with $f$ is defined by $f_y(u,x) := f(u,x) - \langle y, u \rangle$.  Similarly, for a sequence $\{y^\nu\}_{\nu \in \mathbb{N}} \subset \mathbb{R}^m$ and a sequence of functions $\{f^\nu\}$, the \emph{approximating tilted Rockafellian} is defined by $f^\nu_{y^\nu}(u,x) := f^\nu(u,x) - \langle y^\nu, u \rangle$.

The \emph{Lagrangian} $l : \mathbb{R}^n \times \mathbb{R}^m \to \overline{\mathbb{R}}$ associated with $f$ is defined by
\begin{equation}
    l(x,y) := -\bigl(f(\cdot,x)\bigr)^*(y)
    = \inf_{u \in \mathbb{R}^m} \{ f(u,x) - \langle y,u \rangle \}.
    \label{lagrangian}
\end{equation}
For a fixed multiplier $y \in \mathbb{R}^m$, the \emph{Rockafellian relaxation} of the problem $\min_{x \in \mathbb{R}^n} \varphi(x)$ is given by
\[
\min_{x \in \mathbb{R}^n} \, l(x,y).
\]
Optimization over the multiplier $y$ yields the dual problem. The \emph{dual objective function} $\psi : \mathbb{R}^m \to \overline{\mathbb{R}}$ is defined by
\[
\psi(y) := -f^*(y,0)
= \inf_{(u,x) \in \mathbb{R}^m \times \mathbb{R}^n} \{ f(u,x) - \langle y,u \rangle \},
\]
and the corresponding \emph{dual problem} is $\sup_{y \in \mathbb{R}^m} \psi(y)$.

When the function $f : \mathbb{R}^m \times \mathbb{R}^n \to \overline{\mathbb{R}}$ is convex, so is the following auxiliary function $\phi : \mathbb{R}^m \to \overline{\mathbb{R}}$
\[
\phi(u) := \inf_{x \in \mathbb{R}^n} f(u,x).
\]
Moreover, in terms of the biconjugate, we have
\[
\phi^{**}(0) = \liminf_{u \to 0} \phi(u) = \sup_{y \in \mathbb{R}^m} \psi(y).
\]

\section{Market model and investment problem}

We consider the optimization problem of an investor who seeks to choose a trading strategy that maximizes the expected terminal value of the portfolio, subject to the constraint of hedging the cash flows of a given contingent claim.

Let $(\Omega,\mathcal{A},\mathbb{P})$ be a probability space and let $T>0$ be a finite time horizon. We work in discrete time and consider an $\mathbb{R}^d$-valued stochastic process $\xi := \{\xi_t\}_{t=0}^{T+1}$, where $\xi_0$ is deterministic and, for each $t \ge 1$, $\xi_t : \Omega \to \mathbb{R}^d$ has finite support. The uncertainty in the model is described by the realizations of the process $\xi$, which we refer to as the environment process. This means that we endow the probability space with the filtration generated by $\xi$. We denote it by $\{\F_t\}_{t=0}^{T+1}$.

For $t\in\{0,\ldots,T+1\}$, we define the history of the environment process up to time $t$ by
\[
  \xi_{0:t}:= (\xi_0,\ldots,\xi_t) \in \mathbb{R}^{(t+1)d}.
\]

We denote by $\{S_t\}_{t=0}^{T+1}$ the process of market prices of the $k+1$ basic securities, that is,
\[
S_t(\xi_{0:t})=(S_t^0,S_t^1(\xi_{0:t}),...,S_t^k(\xi_{0:t})),
\]
where $S_t^0$ denotes the price of a risk--free asset, and for $i\in\{1,\ldots,k\}$, $S_t^i(\xi_{0:t})$ is the price of the $i$-th risky asset at time $t$. We assume that all asset prices are strictly positive. We also assume that the market is arbitrage-free, meaning that there exists no self-financing trading strategy with zero initial wealth that yields a nonnegative terminal wealth almost surely and a strictly positive terminal wealth with positive probability.

For each $t \in \{0,...,T\}$, let $G_t : \mathbb{R}^{(t+1)d} \to \mathbb{R}$ denote the payoff of the contingent claim at time $t$. The associated payoff process is therefore $G:=\{G_t(\xi_{0:t})\}_{t=0}^T$. We assume that $G_0 > 0$, representing the initial capital, which may consist of both borrowed funds and capital contributed by, and that $G_t < 0$ for $t \geq 1$ corresponding to the cash outflows required to meet the claim payments. The randomness of the contingent claim is entirely determined by the history of the environment process.

The writer sets up a portfolio to meet the contingent claim with some investment strategy. For every $t\in\{0,\dots,T\}$, let $X_t$ be the portfolio allocation at time $t$, which is a function of the environment process. Therefore, $X$ is adapted to the filtration generated by $\xi$. By letting $n_t\in\N$ denote the size of the support of $\xi_{0:t}$, we can represent $X_t$ as an element of $\R^{(k+1)n_t}$. We then denote $X:= (X_0,X_1,...,X_T)$ to the trading strategy over the whole time period. The writer seeks to maximize the expected terminal wealth of the portfolio by solving the following optimization problem, in dimension $n:=(k+1)N_T$, where $N_T:=\sum_{t=0}^T n_t$.
\begin{align}
\max_{X\in \R^n} \ &\mathbb{E}\langle S_{T+1}(\xi_{0:T+1}),X_T(\xi_{0:T}) \rangle \label{max_problem} \\
\text{s.t. } & \langle S_0,X_0\rangle \leq G_0, \notag  \\
& \langle S_t(\xi_{0:t}),X_t(\xi_{0:t}) -X_{t-1}(\xi_{0:t-1}) \rangle \leq G_t(\xi_{0:t}), \ \ \forall t\in\{1,...,T\}  \ \ \text{a.s.} \notag \\
&X_t(\xi_{0:t}) \geq 0, \ \forall t\in\{0,...,T\} \ \ \text{a.s.} \notag
\end{align}
We impose the No–short-selling constraint for technical reasons, namely to preserve important properties of the Rockafellian under perturbations of the probability measure, such as tightness, and to avoid degeneracy in the dual problem (see Remark~\ref{rem:positivity}). The investment strategy $X$ is thus a vector in the following feasible region, which is closed and convex.
\begin{align*}
\mathcal{X}:=\{  X\in \R^n :  ~~& \langle S_0,X_0\rangle \leq G_0, ~ X_t(\xi_{0:t}) \geq 0, \ \forall t=0,...,T \ \ \text{a.s.}, \\
 & \langle S_t(\xi_{0:t}),X_t(\xi_{0:t}) -X_{t-1}(\xi_{0:t-1}) \rangle \leq G_t(\xi_{0:t}), \ \ \forall t=1,...,T  \ \ \text{a.s.} \}.
\end{align*}
The last decision $X_T$ guarantees that a terminal wealth is computed from the portfolio position after paying the claim.

While standard portfolio hedging models in the literature typically liquidate the terminal portfolio at time T concurrent with the final claim payment, our framework extends the horizon to $T+1$. This choice reflects a scenario where the writer treats the contingent claim as an intermediate liability within a longer-term investment strategy. Because no claim payments occur at time $T+1$, the allocation $X_T$ chosen at time $T$ is completely unburdened by hedging constraints. This allows the writer to optimize the final period's risk-return profile purely for terminal wealth maximization and take more aggressive positions.

\begin{remark}\label{rem:compactness-X}
Observe that the feasible set $\mathcal X$ may be assumed to be compact without loss of generality, and we shall do so from now on. Indeed, at time $t=0$, for every asset $i$ such that $S_0^i>0$, the constraint $\langle S_0, X_0\rangle \leq G_0$ implies that $X_0^i\in[0,M_0^i]$ with $M_0^i:=G_0/S_0^i$. Proceeding inductively, at any $t>0$ and for any asset such that $S_t^i(\xi_{0:t})>0$, the constraint  $\langle S_t(\xi_{0:t}), X_t - X_{t-1}\rangle \leq G_t(\xi_{0:t})$ yields
\[
0\leq X_t^i(\xi_{0:t}) \leq \frac{G_t(\xi_{0:t}) + \langle S_t(\xi_{0:t}), M_{t-1}(\xi_{0:t-1})\rangle}{S_t^i(\xi_{0:t})} =: M_t^i(\xi_{0:t}).
\]
\end{remark}

\section{Rockafellian Perturbations and stability} \label{sec:3}

In the context of the optimal portfolio selection problem \eqref{max_problem}, we introduce its primal objective function. To simplify the notation, we no longer write the environment process $\xi$ and remind the reader that $S$ and $X$ are adapted to it. Written in minimization form, we define $\varphi:\R^n\rightarrow\overline\R$ by $\varphi(X) := -\mathbb{E} [\langle S_{T+1},X_T \rangle] + \iota_{\mathcal{X}}(X)$. We are then interested in the following optimization problem
    \begin{equation}
        \min_{X\in \R^n} \varphi (X)
        . \tag{P} \label{P}
    \end{equation}
Given that the environment process can follow finitely many trajectories, we can represent the uncertainty of the model through a tree graph, as illustrated in Figure~\ref{fig:initial-tree}. We add a node at layer $t$ for every realization of $\xi_t$ and an edge for every possible time-transition. Observe that an allocation decision is made at every non-terminal node.

\begin{figure}[t]
\centering
\includegraphics[width=.75\textwidth]{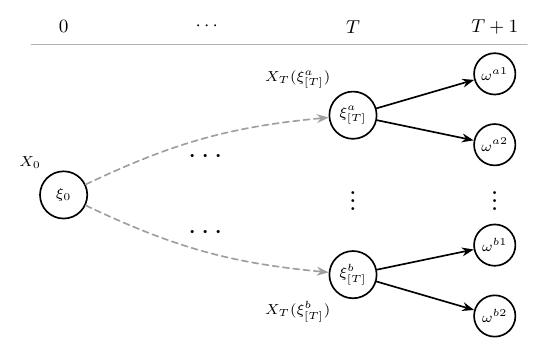}
\caption{Representative scenario tree for the environmental process. Each node at time $t$ corresponds to an information state, and allocation decisions are made at non-terminal nodes.}
\label{fig:initial-tree}
\end{figure}

To examine the stability of the model, we extend the tree just described. We introduce additional scenarios at time $T+1$, each representing an unforeseen `critical' event. In these scenarios the value of some, possibly all, risky assets at time $T$ is multiplied by a positive factor. To be more precise, for each added scenario $j$ originating from the parent node $j^-$, the asset $i\in\{1,\dots,k\}$ is scaled by a factor $\kappa_j^i > 0$.

Since, by construction, the terminal nodes of a scenario tree are associated with probabilities, the introduction of the new scenarios requires a consistent extension of the tree. To preserve the probabilistic structure, the newly added scenarios are assigned zero probability. They are thus part of the scenario structure, but they do not alter the original probabilities in the model. Naturally, when we perturb the problem, we will allow these nodes to have positive probabilities.

The augmented scenario tree is labeled in a top-down manner at time $T+1$. In Figure~\ref{fig:labeled-tree} we depict a situation in which one critical scenario is added originating from node $1^-$ and two critical scenarios are added originating from node $m^-$.

Let $m \in \N$ denote the total number of terminal nodes after augmentation, and let $p \in \Delta_{m-1}$ be the probability vector associated with these nodes. We denote by $I$ the set of indices corresponding to the original terminal nodes and by $\tilde I$ the set of indices associated with the newly added scenarios. Then, as already explained, we impose $p_j =0, \ \forall j \in \tilde I$ so that the original optimization problem \eqref{P} is not affected by the mere augmentation of the tree. The augmented tree is labeled top-down at time $T+1$, as depicted in Figure~\ref{fig:labeled-tree} for a case with one critical scenario added at node $1^-$ and two critical scenarios added at node $m^-$.

\begin{figure}[t]
\centering
\includegraphics[width=.75\textwidth]{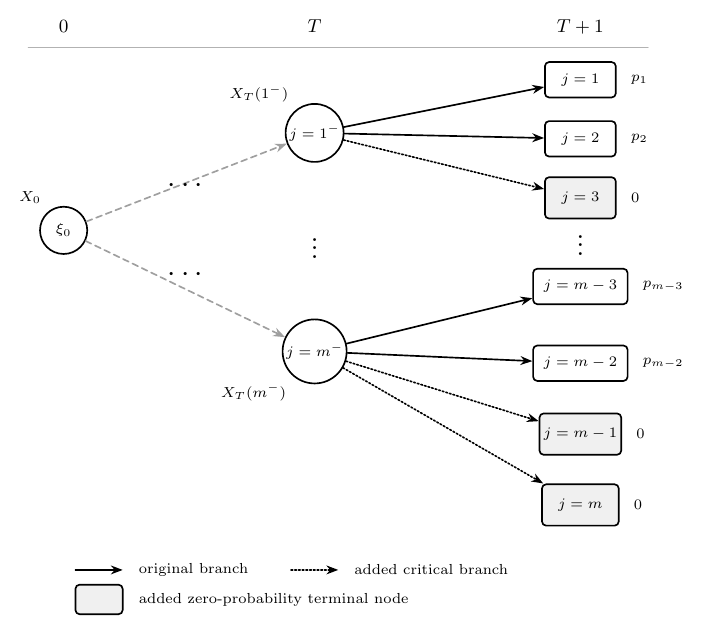}
\caption{Augmented scenario tree used in the stability analysis. Solid branches correspond to scenarios already present in the original model. Dotted branches represent critical scenarios added at time $T+1$ and assigned probability zero in the unperturbed model.}
\label{fig:labeled-tree}
\end{figure}

In the approximating scheme, however, the probabilities of the scenarios are allowed to change. We denote by $p\nup$, with $\nu\in\N$, an approximate probability vector, which may assign positive probabilities to the scenarios in $\tilde I$. In Section~\ref{sec:3.1}, we study stability properties via epi-convergence. Our motivation is that, in practice, the probability vector $p$ can be unknown; instead, one may only have access to an approximate distribution with an associated vector $p\nup$.

\begin{remark}
The probability vector $p^{\nu}$ may not accurately represent the true scenario distribution encoded by $p$. In particular, $p^{\nu}$ may assign positive probability to scenarios that are not originally present in the model (scenarios in $\tilde I$), while others that belong to the support of $p$ (scenarios in $I$) may have zero probability under $p^{\nu}$.
\end{remark}

\begin{remark}\label{rem:definition-p}
Formally, the vector $p\in\Delta_{m-1}$ is defined for every $i\in\{1,\dots,m\}$ by
\[
p_i := \mathbb{P}\big(\text{the realization of the environment process at time } T+1 \text{ corresponds to node } i\big).
\]
We identify the vector $p$ with the probability measure $\mathbb{P}$. Similarly, in the approximation schemes, each vector $p^{\nu}$ is identified with a probability measure $\mathbb{P}^{\nu}$. For notational convenience, expectations with respect to $\mathbb{P}$ and $\mathbb{P}^{\nu}$ are denoted by $\mathbb{E}^{p}$ and $\mathbb{E}^{p^{\nu}}$, respectively.
\end{remark}

\subsection{Rockafellian perturbation of the probability distribution}
\label{sec:3.1}

We start by proposing the Rockafellian (for $\varphi$) $f_1:\R^m \times \R^n \rightarrow \overline{\R}$, defined as follows
    \begin{equation}
     f_1(u,X) := -\sum_{j=1}^m (p_j+u_j) \langle S_{T+1}(j), X_T(j^-) \rangle + \iota_{\mathcal{X}}(X) + \frac{1}{2} \theta \|u\|^2_2 + \iota_{[0,+\infty[^{m}} (p+u), \label{R1}
    \end{equation}
where $p$ is the vector defined in Remark~\ref{rem:definition-p} and $\theta$ is a positive regularization parameter. The vector $u$ represents a perturbation in the probability distribution, so that the leftmost term is a perturbation of the expectation in \eqref{P}. Indeed, when $u=0$, we recover $\mathbb{E}^p[\langle S_{T+1},X_T \rangle]$.

Now let $p\nup \in \Delta_{m-1}$ be an approximation of $p$. We define the approximating objective function $\varphi\nup: \R^n \rightarrow \overline{\R}$ by
\begin{equation} \label{AP}
       \varphi^\nu (X) :=  -\mathbb{E}^{p^\nu}[\langle S_{T+1},X_T \rangle] + \iota_{\mathcal{X}}(X),
\end{equation}
and next the approximating Rockafellian (for $\varphi^\nu$) $f_1^{\nu}:\R^m\times\R^n \rightarrow \overline{\R} $ as
\begin{equation}
        f_1^{\nu}(u,X) :=-\sum_{j=1}^m (p_j \nup+u_j) \langle S_{T+1}(j), X_T(j^-) \rangle + \iota_{\mathcal{X}}(X) + \frac{1}{2} \theta^{\nu} \|u\|_2^2 + \iota_{[0,+\infty[^m} (p^\nu+u),
        \label{AR1}
    \end{equation}
where $\{\theta\nup\}_{\nu \in \N}$ is a family of positive parameters that determines the shape of the regularization term. We will also study a variation in which the perturbation must be negative, so we define
\begin{equation}\label{R1.1}
     \tilde f_1(u,X) := f_1(u,X) + \iota_{]-\infty,0]^{m}} (u),
\end{equation}
and its approximation
\begin{equation}\label{AR1.1}
     \tilde f_1^{\nu}(u,X) := f_1\nup(u,X)  + \iota_{]-\infty,0]^{m}} (u).
\end{equation}

\begin{proposition}\label{prop:epi1}
Let $\theta^{\nu} \to \theta$ and $p^{\nu} \to p$. Then the sequence of approximating Rockafellians $f_1^{\nu}$ epi-converges to $f_1$. Moreover, the sequence $\tilde f_1^{\nu}$ epi-converges to $\tilde f_1$.
\end{proposition}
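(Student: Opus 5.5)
We verify the two conditions in the definition of epi-convergence directly, working in the joint variable $(u,X)\in\R^m\times\R^n$. We split $f_1^\nu$ into a continuous part and indicator parts:
\[
f_1^\nu(u,X)=h^\nu(u,X)+\iota_{\mathcal X}(X)+\iota_{[0,+\infty[^m}(p^\nu+u),
\]
where
\[
h^\nu(u,X):=-\sum_{j=1}^m (p_j^\nu+u_j)\langle S_{T+1}(j),X_T(j^-)\rangle+\tfrac12\theta^\nu\|u\|_2^2 .
\]
The function $h$ is defined in the same way with $p,\theta$ in place of $p^\nu,\theta^\nu$. Since $h^\nu$ is a polynomial in $(u,X)$ whose coefficients depend continuously on $(p^\nu,\theta^\nu)$, it converges to $h$ continuously: if $(u^\nu,X^\nu)\to(u,X)$, then $h^\nu(u^\nu,X^\nu)\to h(u,X)$. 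This follows from $p^\nu\to p$, $\theta^\nu\to\theta$, and the joint continuity of $(p,\theta,u,X)\mapsto h$.

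\textbf{Liminf condition.} Let $(u^\nu,X^\nu)\to(u,X)$. If $X\notin\mathcal X$, then $X^\nu\notin\mathcal X$ for all large $\nu$ because $\mathcal X$ is closed, so $f_1^\nu(u^\nu,X^\nu)=\infty$ eventually. If $p+u\notin[0,+\infty[^m$, then $p^\nu+u^\nu\to p+u$ lies outside the closed orthant for all large $\nu$, and again $f_1^\nu=\infty$ eventually. Otherwise $f_1(u,X)=h(u,X)$, and $f_1^\nu(u^\nu,X^\nu)\ge h^\nu(u^\nu,X^\nu)\to h(u,X)$. In every case $\liminf_\nu f_1^\nu(u^\nu,X^\nu)\ge f_1(u,X)$.

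\textbf{Limsup condition.} This is the only step requiring a construction, because the constraint $p^\nu+u\ge0$ moves with $\nu$. Fix $(u,X)$ with $f_1(u,X)<\infty$; if $f_1(u,X)=\infty$ there is nothing to prove. Then $X\in\mathcal X$ and $p+u\ge0$. Take $X^\nu:=X$ and $u^\nu:=u+p-p^\nu$. Then $p^\nu+u^\nu=p+u\ge0$ exactly, and $u^\nu\to u$ since $p^\nu\to p$. All indicator terms therefore vanish, and
\[
f_1^\nu(u^\nu,X)=h^\nu(u^\nu,X)\to h(u,X)=f_1(u,X)
\]
by continuous convergence. This gives the recovery sequence.

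\textbf{The variant $\tilde f_1^\nu$.} The liminf argument is unchanged, since $]-\infty,0]^m$ is closed and the extra indicator is $\nu$-independent. The limsup step is the main obstacle: the shift $u^\nu=u+p-p^\nu$ may violate $u^\nu\le0$ when $p^\nu_j<p_j$ and $u_j=0$. In that case we use the componentwise choice
\[
u^\nu_j:=\min\{0,\;u_j+p_j-p^\nu_j\}.
\]
This gives $u^\nu\le0$ and $p^\nu+u^\nu\ge\min\{p^\nu,\;p+u\}\ge0$, since $p^\nu\in\Delta_{m-1}$ is nonnegative. Moreover $u^\nu\to\min\{0,u\}=u$ because $u\le0$. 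Hence $\tilde f_1^\nu(u^\nu,X)=h^\nu(u^\nu,X)\to\tilde f_1(u,X)$. The same componentwise formula in fact also works for $f_1^\nu$, so one construction covers both statements.
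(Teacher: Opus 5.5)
Your proof is correct. For $f_1^\nu$ it is the paper's proof: lower semicontinuity of the indicators plus convergence of the finite part for the liminf, and the recovery sequence $X^\nu:=X$, $u^\nu:=u+p-p^\nu$ for the limsup. Putting the finite part into $h^\nu$ and using continuous convergence is slightly cleaner than the paper's version. The paper's liminf computation leaves the term $\tfrac12\theta^\nu\|u^\nu\|_2^2$ implicit, and your case split avoids adding liminfs of extended-valued terms.

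For $\tilde f_1^\nu$ you take a different route. The paper does not argue directly; it invokes \cite[Proposition~3.6(b)]{deride2024approximations} with $g_0=\iota_{\mathcal X}$ and $g_i(X)=\langle -S_{T+1}(i),X_T(i^-)\rangle$. You instead check both conditions by hand with the clipped sequence $u_j^\nu=\min\{0,u_j+p_j-p_j^\nu\}$. This is exactly the construction the paper itself uses later for $f_3^\nu$ in Proposition~\ref{prop:epi3}. Your version is self-contained and shows why clipping is needed: the plain shift can leave $]-\infty,0]^m$ when $u_j=0$ and $p_j^\nu<p_j$. The citation is shorter and ties the result to the general framework the paper later uses for hypo-convergence of the dual functions.

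One correction: delete your closing sentence, because it is false. $f_1$ imposes no sign condition on $u$. If $u_j>0$, for instance mass placed on a critical scenario $j\in\tilde I$ with $p_j=0$, then $u_j^\nu=\min\{0,u_j+p_j-p_j^\nu\}\to\min\{0,u_j\}=0\neq u_j$. So the clipped sequence is not a recovery sequence for $f_1^\nu$ at $(u,X)$. This does not affect your proof, since you already used the unclipped shift for $f_1^\nu$. If you want a single construction for both statements, take $u_j^\nu:=\max\{u_j,-p_j^\nu\}$. It gives $p^\nu+u^\nu\ge 0$ and $u^\nu\to\max\{u,-p\}=u$ (as $u\ge -p$), and it satisfies $u^\nu\le 0$ whenever $u\le 0$.
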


\begin{proof}
(i) We show that Conditions \ref{epi:liminf} and \ref{epi:limsup} for epi-convergence hold. Let $(u,X)\in\R^m\times\R^n$ and $(u^\nu,X^\nu)\subset \R^m\times\R^n$ be such that $(u^\nu,X^\nu) \rightarrow (u,X)$. Since the sets $\mathcal{X}$ and $[0,+\infty[^m$ are closed, their indicator functions are lower semicontinuous. Then we have
$$
\liminf_{\nu \to\infty} ~ \iota_{[0,+\infty[^{m}}(p^\nu+u\nup) \geq \iota_{[0,+\infty[^{m}}(p+u), \quad \liminf_{\nu\to\infty} ~\iota_{\mathcal{X}}(X^\nu) \geq \iota_{\mathcal{X}}(X).
$$
Next, from the convergence of the sequences, we obtain
\begin{align*}
    \liminf_{\nu\to\infty} ~ \sum_{j=1}^m (p_j^{\nu}+u_j^{\nu}) \langle -S_{T+1}(j), X_T^{\nu}(j^-) \rangle &=
\sum_{j=1}^m \lim_{\nu\to\infty} ~ (p_j^{\nu}+u_j^\nu) \langle -S_{T+1}(j), X_T\nup(j^-) \rangle \\
    &= \sum_{j=1}^m (p_j+u_j) \langle-S_{T+1}(j), X_T(j^-)\rangle
\end{align*}
Combining all the facts above, we find that $\liminf_{\nu \to \infty} f_1^\nu(u^\nu,X^\nu) \geq f_1(u,X).$

Consider now $(u,X) \in \R^m \times \R^n$. Without loss of generality, assume that $p+u \in [0,+\infty[^m$ and $X \in \mathcal{X}$ so that $f_1(u,X)<\infty$. For every $\nu\in\N$ define $X\nup := X$ and $ u\nup := p+u-p\nup$ and note that $u\nup \rightarrow u$. Then
\begin{align*}
    \limsup_{\nu \to\infty} f_1\nup(u\nup,X\nup) &= \lim_{\nu \to\infty} \left ( \sum_{j=1}^m (p_j+u_j) \langle -S_{T+1}(j), X_T(j^-)\rangle + \frac{1}{2} \theta\nup \sum_{j=1}^{m} (u_j\nup)^2 \right ) = f_1(u,X).
\end{align*}
We thus conclude that $f_1^{\nu}$ epi-converges to $f_1$.

(ii) The epi-convergence of $\tilde{f}^\nu_1$ follows directly from \cite[Proposition 3.6(b)]{deride2024approximations}, by choosing the functions $g_0(X)=\iota_{\mathcal{X}}(X)$ and $g_i(X)=\langle -S_{T+1}(i), X_T(i^-) \rangle$.
\end{proof}

\begin{remark}
Observe that the previous epi-convergence is not affected by the market prices in the critical scenarios. By this we mean that for any values of the scale factors $\kappa_j^i$, epi-convergence holds. However, if we perturb those factors and the sequence $(\kappa^i_j)^\nu$ grows faster than the rate at which $p\nup$ converges to $p$, then epi-convergence can no longer be guaranteed. This fact is discussed in more detail in Subsection~\ref{sec:counterproposition}.
\end{remark}

\subsection{Rockafellian in claims}
\label{sec:3.2}
We now introduce a Rockafellian perturbation in the set of constraints $\mathcal{X}$ of Problem \eqref{P} and proceed to study similar stability properties as in the previous subsection.

For given functions $u_t : \mathbb{R}^{(t+1)d} \to \mathbb{R}$, with $t \in \{0,...,T\}$, we denote the associated stochastic process by $u:=\{u_t(\xi_{0:t})\}_{t=0}^T$. We define next the perturbed set as
\begin{align*}
\mathcal{X}(u):=\{  X\in \R^n :  ~~& \langle S_0,X_0\rangle \leq G_0+u_0, ~ X_t(\xi_{0:t}) \geq 0, \ \forall t=0,...,T \ \ \text{a.s.}, \\
 & \langle S_t(\xi_{0:t}),X_t(\xi_{0:t}) -X_{t-1}(\xi_{0:t-1}) \rangle \leq G_t(\xi_{0:t})+ u_t(\xi_{0:t}), \ \ \forall t=1,...,T  \ \ \text{a.s.} \}.
\end{align*}
We define the Rockafellian (for $\varphi$) $f_2: \R^{N_T}\times\R^n \rightarrow \overline{\R}$ by
\begin{equation}
    f_2(u,X)  := -\mathbb{E}^p \langle S_{T+1},X_T\rangle + \iota_{\mathcal{X}(u)}(X).
    \label{R2}
\end{equation}
We recall the vector $p$ defined in Remark~\ref{rem:definition-p} and, for an approximation $p^\nu$ of $p$, the function $\varphi\nup$ defined in \eqref{AP}. We define now the approximating Rockafellian (for $\varphi\nup$) $f_2\nup: \R^{N_T} \times \R^n \rightarrow \overline{\R}$ by

\begin{equation}
    f_2\nup (u,X) = -\mathbb{E}^{p\nup} \langle S_{T+1},X_T\rangle + \iota_{\mathcal{X}(u)}(X).
    \label{AR2}
\end{equation}
The next result is analogous to Proposition~\ref{prop:epi1}.

\begin{proposition}\label{prop:epi2}
    Let $p\nup \to  p$, then the sequence of approximating Rockafellians $f_2\nup$ epi-converges to $f_2$.
\end{proposition}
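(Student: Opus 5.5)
Following the proof of Proposition~\ref{prop:epi1}, I would verify Conditions~\ref{epi:liminf} and~\ref{epi:limsup} directly. The key observation is that $f_2$ and $f_2^\nu$ share the same indicator term $\iota_{\mathcal{X}(u)}(X)$. The approximation enters only through the linear objective, whose coefficients $p^\nu_j S_{T+1}(j)$ converge to $p_j S_{T+1}(j)$. So the argument reduces to two facts: the set-valued indicator is lower semicontinuous jointly in $(u,X)$, and a finite sum of bilinear terms is continuous.

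For the liminf condition, take $(u^\nu,X^\nu)\to(u,X)$. The first step is to show that the joint constraint set
\[
\mathcal{G}:=\{(u,X): X\in\mathcal{X}(u)\}
\]
is closed in $\R^{N_T}\times\R^n$. It is defined by finitely many non-strict inequalities, each affine in the pair $(u,X)$: the budget constraints $\langle S_t, X_t-X_{t-1}\rangle - u_t \le G_t$ and the sign constraints $X_t\ge 0$, one for each node. Since $\iota_{\mathcal{X}(u)}(X)=\iota_{\mathcal{G}}(u,X)$, lower semicontinuity gives
\[
\liminf_\nu \iota_{\mathcal{X}(u^\nu)}(X^\nu)\ge \iota_{\mathcal{X}(u)}(X).
\]
For the objective, the finite sum
\[
-\mathbb{E}^{p^\nu}\langle S_{T+1},X_T^\nu\rangle=-\sum_{j=1}^m p^\nu_j\langle S_{T+1}(j),X^\nu_T(j^-)\rangle
\]
converges to $-\mathbb{E}^{p}\langle S_{T+1},X_T\rangle$ because $p^\nu\to p$ and $X^\nu\to X$. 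Adding the two gives $\liminf_\nu f_2^\nu(u^\nu,X^\nu)\ge f_2(u,X)$. Here the convention $\infty-\infty=\infty$ is not needed, since the linear part is always finite.

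For the limsup condition, fix $(u,X)$. If $f_2(u,X)=\infty$ there is nothing to prove. Otherwise $X\in\mathcal{X}(u)$, and I take the constant recovery sequence $(u^\nu,X^\nu):=(u,X)$. Unlike Proposition~\ref{prop:epi1}, no correction of $u$ is needed, because the constraint set does not depend on $\nu$. Then $f_2^\nu(u,X)=-\sum_j p^\nu_j\langle S_{T+1}(j),X_T(j^-)\rangle\to f_2(u,X)$. This is the required inequality, in fact with equality.

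The only step needing real care is the closedness of $\mathcal{G}$, that is, the joint lower semicontinuity of $(u,X)\mapsto\iota_{\mathcal{X}(u)}(X)$. Lower semicontinuity in $X$ alone for fixed $u$ would not suffice, since $u^\nu$ varies. This is settled by writing $\mathcal{G}$ explicitly as a finite intersection of closed half-spaces in the joint variable. An alternative is to note that the constant-in-$\nu$ Rockafellian splits as a continuous linear perturbation plus a fixed lsc function, and to invoke the standard sum rule for epi-convergence under continuous convergence.
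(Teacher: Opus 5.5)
Your proposal is correct and follows essentially the same route as the paper. The paper also obtains the liminf inequality by rewriting $\iota_{\mathcal{X}(u)}(X)$ as the indicator of the jointly closed set $\tilde{\mathcal{X}}$ in $(X,u)$ and using convergence of the finite linear sum, and it uses the constant recovery sequence $(u^\nu,X^\nu)=(u,X)$ for the limsup.
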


\begin{proof}
We show that Conditions \ref{epi:liminf} and \ref{epi:limsup} for epi-convergence hold. Let $(u,X)\in\R^{N_T} \times\R^n$ and $(u^\nu,X^\nu)\subset \R^{N_T}\times\R^n$ be such that $(u^\nu,X^\nu) \rightarrow (u,X)$, then $\liminf_{\nu \in \N} \iota_{\mathcal{X}(u\nup)}(X\nup) \geq \iota_{\mathcal{X}(u)}(X)$. This can be seen directly by replacing the term $\iota_{\mathcal{X}(u)}(X)$ by $\iota_{\tilde{\mathcal{X}}}(X,u)$ in the Rockafellian \eqref{R2}, where
\begin{align*}
\tilde{\mathcal{X}}:=\{  (X,u)\in \R^{n+N_T} :  ~& \langle S_0,X_0\rangle \leq G_0+u_0, ~ X_t(\xi_{0:t}) \geq 0, ~ \forall t=0,...,T ~ \text{a.s.}, \\
 & \langle S_t(\xi_{0:t}),X_t(\xi_{0:t}) -X_{t-1}(\xi_{0:t-1}) \rangle \leq G_t(\xi_{0:t})+ u_t(\xi_{0:t}), ~ \forall t=1,...,T  ~ \text{a.s.} \}.
\end{align*}
Then, the property follows from the lower semicontinuity of the indicator function.
Next, note that
    \begin{align*}
        \liminf_{\nu\to\infty} -\mathbb{E}^{p\nup} \langle S_{T+1},X_T\rangle
        = \lim_{\nu\to\infty} \sum_{j=1}^m p_j\nup \langle -S_{T+1}(j),X_T\nup(j^-)\rangle
        &= \sum_{j=1}^m p_j \langle -S_{T+1}(j),X_T(j^-)\rangle,
    \end{align*}
which allows us to conclude that $\liminf_{\nu\to\infty} f_2\nup(u\nup,X\nup)\geq  f_2(u,x)$.

Consider now $(u,X) \in \R^{N_T} \times \R^n$. Without loss of generality, assume that $X \in \mathcal{X}(u)$ so that $f_2(X,u)<\infty$. For every $\nu\in\N$ define $X\nup := X$ and $ u\nup := u$, so then
\begin{align*}
    \limsup_{\nu \to\infty} f_2\nup(u\nup,X\nup)     & = \lim_{\nu\to\infty} \sum_{j=1}^m p_j\nup \langle -S_{\scriptscriptstyle T+1}(j),X_T(j^-)\rangle
     = \sum_{j=1}^m p_j \langle -S_{\scriptscriptstyle T+1}(j),X_T(j^-)\rangle
    = f_2(u,X). ~
\end{align*}
\end{proof}

\subsection{Rockafellian perturbation of the probability and claim}
We now study the effect of perturbing both the probability measure and the feasibility set, combining thus the Rockafellians \eqref{R1.1}  and \eqref{R2} from the previous subsections. The goal is to analyze the effect on the contingent claim of introducing perturbations in the probability measure and see if the stability that both Rockafellians have is maintained. We thus consider the Rockafellian (for $\varphi$) $f_3: \R^m\times\R^{N_T} \times \R^n \rightarrow\overline{\R} $ defined as
\begin{equation}
    f_3(u,v,X) := -\sum_{j=1}^m (p_j+u_j) \langle S_{\scriptscriptstyle T+1}(j), X_T(j^-)\rangle + \iota_{\mathcal{X}(v)}(X) + \frac{1}{2} \theta \|u\|^2_2 + \iota_{[0,+\infty[^{m}} (p+u) + \iota_{]-\infty,0]^m}(u),
    \label{R3}
\end{equation}
and the approximating Rockafellian (for $\varphi^\nu$) $f_3\nup: \R^m\times\R^{N_T} \times \R^n \rightarrow\overline{\R} $ defined by
\begin{equation}
    f_3\nup(u,v,X) := -\sum_{j=1}^m (p_j\nup+u_j) \langle S_{T+1}(j), X_T(j^-)\rangle + \iota_{\mathcal{X}(v)}(X) + \frac{1}{2} \theta\nup \|u\|^2_2 + \iota_{[0,+\infty[^{m}} (p\nup+u) + \iota_{]-\infty,0]^m}(u).
    \label{AR3}
\end{equation}

The next result is analogous to Propositions \ref{prop:epi1} and \ref{prop:epi2}.

\begin{proposition}\label{prop:epi3}
    If $p\nup \rightarrow p$ and $\theta \nup \rightarrow \theta$, then the sequence of approximating Rockafellians $f_3\nup$ epi-converges to the Rockafellian $f_3$.
\end{proposition}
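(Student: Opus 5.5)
We verify the two conditions for epi-convergence directly, combining the arguments of Propositions~\ref{prop:epi1} and~\ref{prop:epi2}. Write
\[
f_3^\nu(u,v,X) = h^\nu(u,X) + \iota_{\tilde{\mathcal X}}(X,v) + \tfrac12\theta^\nu\|u\|_2^2 + \iota_{[0,+\infty[^m}(p^\nu+u) + \iota_{]-\infty,0]^m}(u),
\]
where $h^\nu(u,X) := -\sum_{j=1}^m (p_j^\nu+u_j)\langle S_{T+1}(j),X_T(j^-)\rangle$ and $\tilde{\mathcal X}$ is the closed set from the proof of Proposition~\ref{prop:epi2}. Define $h$ analogously with $p$ in place of $p^\nu$.

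\emph{Liminf condition.} Let $(u^\nu,v^\nu,X^\nu)\to(u,v,X)$. The function $h^\nu(u^\nu,X^\nu)$ is a finite sum of products of convergent real sequences, so it converges to $h(u,X)$. Likewise $\tfrac12\theta^\nu\|u^\nu\|^2\to\tfrac12\theta\|u\|^2$. The three indicator terms are lower semicontinuous because $\tilde{\mathcal X}$, $[0,+\infty[^m$ and $]-\infty,0]^m$ are closed. Moreover $(X^\nu,v^\nu)\to(X,v)$ and $p^\nu+u^\nu\to p+u$. Hence the liminf of each indicator term dominates its limit value. Since all terms are bounded below along the sequence (the finite terms converge and the indicators are nonnegative), the liminf of the sum is at least the sum of the liminfs. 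This gives $\liminf_\nu f_3^\nu(u^\nu,v^\nu,X^\nu)\ge f_3(u,v,X)$.

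\emph{Limsup condition (the main obstacle).} Take $(u,v,X)$ with $f_3(u,v,X)<\infty$, so that $X\in\mathcal X(v)$, $p+u\ge0$ and $u\le0$. Set $X^\nu:=X$ and $v^\nu:=v$. The recovery sequence for $u$ must now satisfy two requirements at once: $p^\nu+u^\nu\ge0$ and $u^\nu\le0$. The shift used in Proposition~\ref{prop:epi1}, namely $u^\nu=p+u-p^\nu$, can violate $u^\nu\le0$ at components where $p^\nu_j<p_j$. We therefore take componentwise
\[
u_j^\nu := \max\{\,p_j+u_j-p^\nu_j,\; -p^\nu_j\,\}\wedge 0,
\]
which is the projection of $p+u-p^\nu$ onto the box $[-p^\nu_j,0]$. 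This box is nonempty because $p^\nu_j\ge0$. By construction both constraints hold, so all indicator terms vanish. Since $p+u-p^\nu\to p+u\in[-p,0]$ and the box endpoints converge, we get $u^\nu\to u$. Note that $u_j\in[-p_j,0]$ follows from $p+u\ge0$ and $u\le0$. Hence $p^\nu+u^\nu\to p+u$, and the bilinear term and the quadratic term converge to their limits, giving $\lim_\nu f_3^\nu(u^\nu,v,X)=f_3(u,v,X)$. If $f_3(u,v,X)=\infty$, the constant sequence works trivially.

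Alternatively, the $u$-part could be handled by appealing to \cite[Proposition 3.6(b)]{deride2024approximations} as in Proposition~\ref{prop:epi1}(ii). However, the explicit projection construction above is self-contained, and it is the only genuinely new point beyond Propositions~\ref{prop:epi1} and~\ref{prop:epi2}.
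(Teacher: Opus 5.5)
Your proof is correct and is essentially the paper's argument: the paper also takes $X^\nu=X$, $v^\nu=v$ and $u_j^\nu=\min\{0,\,p_j+u_j-p_j^\nu\}$, and since $p_j+u_j\ge 0$ the lower clamp at $-p_j^\nu$ in your projection is never active, so your recovery sequence coincides exactly with the paper's, and your liminf step is the same lower-semicontinuity-plus-continuity argument. One small slip: $p+u-p^\nu\to u$ (which lies in $[-p,0]$), not $p+u$ as you wrote.
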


\begin{proof}
We show that Conditions \ref{epi:liminf} and \ref{epi:limsup} for epi-convergence hold. Let $(u,v,X)\in\R^m\times\R^{N_T}\times\R^n$ and $(u^\nu,v^\nu,X^\nu)\subset \R^m\times \R^{N_T}\times\R^n $ be such that $(u^\nu,v^\nu,X^\nu) \rightarrow (u,v,X)$. Since the sets $]-\infty,0]^m$ and $[0,+\infty[^m$ are closed, the functions  $\iota_{]-\infty,0]^m}(u)$ and $\iota_{[0,+\infty[^{m}}(p+u)$ are lsc and, by the same argument used in Proposition 3.1, it follows that $\iota_{\mathcal{X}(v)}(X)$ is lsc.

From the convergence of the sequences, we have
\begin{align*}
    \liminf_{\nu \to\infty} \sum_{j=1}^m (p_j^{\nu}+u_j^{\nu}) \langle -S_{T+1}(j), X_T^{\nu}(j^-) \rangle
    & = \sum_{j=1}^m \lim_{\nu \to\infty}  (p_j^{\nu}+u_j^\nu) \langle -S_{T+1}(j), X_T\nup(j^-) \rangle \\
    &= \sum_{j=1}^m (p_j+u_j) \langle-S_{T+1}(j), X_T(j^-)\rangle,
\end{align*}
from which we conclude that $\liminf_{\nu \to\infty} f_3^{\nu}(u^\nu,v^\nu,X^\nu) \geq f_3(u,v,X)$.

Now let $(u,v,X) \in \R^m \times \R ^{N_T} \times \R^n $. Without loss of generality, we assume that $X\in \mathcal{X}(v)$, $p+u \in [0,+\infty[^m$ and $u \leq 0$ so that $f_3(u,v,X)<\infty$. For every $\nu\in\N$, we define $X\nup :=  X$, $v\nup :=  v$ and $ u_j\nup := \min \{0,p_j+u_j-p_j\nup\}$ for every $j\in\{1,\dots,m\}$. Note that $u \nup \leq 0$, $u\nup + p\nup \geq 0$  and $ u\nup \rightarrow u$. Then
\begin{align*}
    \limsup_{\nu\to\infty} f_3\nup (u\nup,v\nup,X\nup) & = \iota_{\mathcal{X}(v)}(X) + \sum_{j=1}^m \lim_{\nu\to\infty} (p_j\nup + \min \{0, p_j+u_j-p\nup_j\}) \langle-S_{T+1}(j), X_T(j^-)\rangle \\
    & \hspace{.5cm} + \lim_{\nu\to\infty} \frac{1}{2} \theta \nup \sum_{j=1}^m (\min \{0, p_j+u_j-p\nup_j\}) ^2 \\
    & = f_3(u,v,X). \quad
\end{align*}
\end{proof}

\subsection{Failure of epi-convergence under perturbations of critical scenarios} \label{sec:counterproposition}

While probability convergence often implies solution proximity, this property can fail if catastrophic events carry vanishingly small probabilities but arbitrarily large impacts (see the example in \cite{antil2024rockafellian}). We formalize this instability below, noting that it does not invalidate our previous results since it relies on a different perturbation mechanism than Section~\ref{sec:3.1}. Recall that $\tilde{I}$ denotes the set of catastrophic nodes, while $I$ denotes the set of original nodes.

\begin{proposition}\label{prop:no epi}
    Suppose $p^\nu \to p$, and the asset prices in the critical scenarios are scaled by $(\kappa_j^i)^\nu$ such that $\liminf_{\nu \in \N} p_j^\nu (\kappa_j^i)^\nu > 0$ for some $j\in \tilde I$ some risky asset $i$. Then, in such setting with critical events, any approximating Rockafellian fails to epi-converge to a Rockafellian of the original problem.
\end{proposition}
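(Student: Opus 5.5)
We will argue by contradiction: we show that the natural approximating objectives $\varphi^\nu$, and hence any approximating Rockafellians $f^\nu$ for them, cannot epi-converge to a Rockafellian $f$ for $\varphi$. In the critical setting,
\[
\varphi^\nu(X) = -\sum_{j\in I} p_j^\nu \langle S_{T+1}(j),X_T(j^-)\rangle - \sum_{j\in\tilde I} p_j^\nu \langle S^\nu_{T+1}(j),X_T(j^-)\rangle + \iota_{\mathcal X}(X),
\]
where $S^\nu_{T+1}(j)$ has $i$-th component $(\kappa^i_j)^\nu S_T^i(j^-)$. The key observation is that the critical term does not vanish in the limit, even though $p^\nu_j\to 0$. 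By hypothesis, $c:=\liminf_\nu p_j^\nu(\kappa_j^i)^\nu>0$ for some $j\in\tilde I$ and some risky asset $i$.

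\textbf{Step 1: a point where the limit objective differs.} Choose $X\in\mathcal X$ with $X_T^i(j^-)>0$. Such a point exists: invest part of the available wealth in asset $i$ at node $j^-$, which is feasible because prices are positive and, by Remark~\ref{rem:compactness-X}, the bounds $M_T^i(j^-)$ leave room provided the budget at that node is positive. If the budget is not positive at $j^-$ for every feasible strategy, we restrict to a model in which it is; we will state this nondegeneracy as implicit in the hypothesis. All other terms converge because $p^\nu\to p$ and the remaining prices are fixed. The critical term, however, satisfies
\[
\limsup_\nu \bigl(-p_j^\nu(\kappa_j^i)^\nu S_T^i(j^-)X_T^i(j^-)\bigr) \le -c\,S_T^i(j^-)X_T^i(j^-) <0 .
\]
Since all quantities are nonnegative, the other critical coordinates only push in the same direction.

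\textbf{Step 2: contradiction with both epi-convergence inequalities.} We work with $u=0$ and rely on the Rockafellian identities $f^\nu(0,\cdot)=\varphi^\nu$ and $f(0,\cdot)=\varphi$. Take the constant sequence $(0,X)$. Then
\[
\liminf_\nu f^\nu(0,X)\le \varphi(X)-c\,S_T^i(j^-)X_T^i(j^-)<f(0,X),
\]
which violates Condition~\ref{epi:liminf}. If an approximating Rockafellian is perturbed in $u$ as well, we handle it by letting $(u^\nu,X^\nu)\to(0,X)$ and using the continuity of the linear objective in $X$ on the compact set $\mathcal X$. Even more directly, the infima diverge: $\inf\varphi^\nu\le\varphi(X)-c'$ eventually, while $\inf\varphi$ is finite. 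Since tightness holds thanks to the compactness of $\mathcal X$, epi-convergence would force $\inf f^\nu(0,\cdot)\to\inf f(0,\cdot)$, again a contradiction.

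\textbf{Main obstacle.} The main difficulty is interpreting ``any approximating Rockafellian.'' For an arbitrary Rockafellian the values at $u\neq 0$ are unconstrained, so the contradiction must be extracted at the anchor $u=0$. The plan is therefore to fix $u^\nu=0$ and invoke only the liminf condition along $(0,X)$. This uses nothing beyond the defining identity of a Rockafellian, so it applies to every choice of $f^\nu$ and $f$.
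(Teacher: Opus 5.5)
Your main argument is correct and is essentially the paper's proof. You anchor at $u=0$, use $f^\nu(0,\cdot)=\varphi^\nu$ and $f(0,\cdot)=\varphi$, and choose $X\in\mathcal X$ with $X_T^i(j^-)>0$. The terms indexed by $I$ then converge to $\varphi(X)$ (because $p_j=0$ on $\tilde I$), while the nonpositive critical terms contribute at most $-c\,S_T^i(j^-)X_T^i(j^-)<0$ in the limit, so Condition~\ref{epi:liminf} fails. Using the constant sequence is slightly cleaner than the paper's generic $X^\nu\to X$: one violating sequence suffices, so your sentence about handling $u^\nu\neq 0$ is unnecessary. You are also right that the existence of such an $X$ is a tacit nondegeneracy assumption; the paper uses it without comment.

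You should, however, delete the ``even more directly'' argument, because it is wrong. The bound $\inf\varphi^\nu\le\varphi(X)-c'$ contradicts nothing unless $X$ minimizes $\varphi$, and no minimizer of $\varphi$ need hold asset $i$ at $j^-$. In fact the optimal values need not separate at all. If $c$ is small compared with the return advantage of the other assets at $j^-$, then $\varphi^\nu$ and $\varphi$ have the same minimizers and minimum values, while the hypothesis of the proposition still holds. An instance is the example of Section~\ref{sec:counterexample} with $\lim_\nu q^\nu\kappa^\nu=\kappa>0$ small enough that the risky asset stays dominated by the risk-free one.

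Moreover, even if the infima did separate, that would only rule out epi-convergence of the slices $\varphi^\nu=f^\nu(0,\cdot)$ to $\varphi$. Epi-convergence of $f^\nu$ to $f$ jointly in $(u,X)$, together with tightness, controls $\inf f^\nu$ over $(u,X)$, not the infima of the slices at $u=0$. Also, tightness in $(u,X)$ for an arbitrary Rockafellian does not follow from compactness of $\mathcal X$, since $u$ is unrestricted. The liminf violation along the constant sequence $(0,X)$ is the right argument, and the only one you need.
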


\begin{proof}
Let $S_{T+1}\nup(j)$ denote the asset prices in scenario $j$ at time $T+1$, explicitly indicating their dependence on $\nu$. That is, for every $j\in\tilde I$, $S_{T+1}^\nu(j)= (\kappa_j^i)^\nu S_{T}(j^-)$.

To show that epi-convergence fails, we test Condition \ref{epi:liminf} at $u = 0$ for an $X \in \mathcal{X}$ where $X_T^i$ is strictly positive in at least one critical scenario. Let $X^\nu \to X$ and assume $\liminf_{\nu \in \N} f^{\nu}(0,X^\nu)<\infty$ without loss of generality. We have that
\begin{align*}
     \liminf_{\nu \in \N} f^{\nu}(0,X\nup) = & \liminf_{\nu \in \N} ~ \sum_{j=1}^m p_j^{\nu} \langle -S_{T+1}\nup(j), X_T^{\nu}(j^-) \rangle \\
   = &  \liminf_{\nu \in \N}~   \bigg(\sum_{j \in I} p_j\nup \langle -S_{T+1}(j), X_T^{\nu}(j^-)\rangle+ \sum_{j \in \tilde I} p_j\nup\langle -S_{T+1}\nup(j), X_T^{\nu}(j^-)\rangle \bigg) \\
    = & \sum_{j \in I} p_j \langle -S_{T+1}(j), X_T(j^-)\rangle+\liminf_{\nu \in \N}  \sum_{j \in \tilde I} p_j\nup \langle -S_{T+1}\nup(j), X_T^{\nu}(j^-)\rangle
\end{align*}
The first term matches $f(0,X)$ for any Rockafellian $f$. However, evaluating the second term under our growth assumption reveals that
\begin{align*}
   \liminf_{\nu \in \N} \sum_{j \in \tilde I} p_j^\nu \langle -S_{T+1}^\nu(j), X_T^\nu(j^-)\rangle \leq -\sum_{j \in \tilde I} \sum_{i=0}^k \liminf_{\nu \in \N} p_j^\nu (\kappa_j^i)^\nu S_{T}^i(j^-)\cdot (X_T^i(j^-))^\nu < 0,
\end{align*}
which prevents $\liminf_{\nu \in \N} f^{\nu}(0,X^\nu) \geq f(0,X)$ from holding. Thus $f^{\nu}\xrightarrow{e} f$  fails.
\end{proof}

This phenomenon underscores the boundaries of the stability framework under Rockafellian perturbations. As will be illustrated in Subsection~\ref{sec:counterexample}, introducing catastrophic events with vanishing probabilities but unbounded asset prices yields a sequence of problems where the probability distributions converge to the baseline model, yet the optimal solutions remain unstable. Consequently, the approximating Rockafellian functions fail to preserve stability in the sense of epi-convergence.

\section{Dual stability and shadow prices}
\label{sec:4}

We now turn to the dual problems associated with the Rockafellians introduced in Section~\ref{sec:3}. We interpret the dual variables as shadow prices, study the stability of the dual functions through hypo-convergence, and establish strong duality together with a characterization of the duality gap whenever strong duality fails.  Before proceeding, we comment on the role of the no-short-selling constraint imposed on the investment problem.

\begin{remark}\label{rem:positivity}
The no-short-selling constraint $X \geq 0$ is crucial for ensuring that the dual problem remains non-trivial. In contrast, the primal problem is well-defined under the no-arbitrage assumption.
Indeed, if this constraint is removed from \eqref{max_problem}, the feasible set loses compactness and thus admits a recession direction $d\in\R^n$ satisfying $\langle S_0, d_0 \rangle \leq 0$ and $\langle S_t, d_t - d_{t-1} \rangle \leq 0$ for every $t \in \{1, \dots, T\}$, with $\langle S_{T+1}(j), d_T(j^-) \rangle > 0$ for some scenario $j\in\{1,\dots,m\}$.
Consequently, the dual function
degenerates: for the probability-perturbation Rockafellian, choosing $\bar{u}_j = 0$ and $\bar{u}_k = -p_k$ for all $k \neq j$ yields
\begin{equation*}
\psi_1(y) \leq \sum_{k=1}^m (p_k + \bar{u}_k) \, g_k(X + \lambda d) - \langle y, \bar{u} \rangle = p_j \, g_j(X + \lambda d) - \langle y, \bar{u} \rangle \longrightarrow -\infty
\end{equation*}
as $\lambda \to \infty$, which implies $\psi_1 \equiv -\infty$. The same direction $d$ prevents the approximating tilted Rockafellian from being tight.
\end{remark}

\subsection{Probability perturbation}

\subsubsection{Lagrangian and dual problem}

We study the dual problems through the Lagrangians associated with the Rockafellians $f_1$ and $\tilde f_1$, defined in \eqref{R1} and \eqref{R1.1}, respectively. Cases 1.1 and 1.2 correspond to $f_1$ with $\theta = 0$ and $\theta > 0$, respectively, while Cases 2.1 and 2.2 correspond to $\tilde f_1$ under the same distinction. For convenience, we define the function $g_j(X):=\langle -S_{T+1}(j),X_T(j^-)\rangle$, and note that $g_j(X) \le 0$ for every $X \in \mathcal{X}$, since asset prices are nonnegative and short-selling is prohibited.

\setcounter{case}{0}
\renewcommand{\thecase}{1.\arabic{case}}

\refstepcounter{case}
\textbf{Case \thecase: Lagrangian associated with $f_1$ and $\theta=0$}. \label{case:1.1}

For any $X \in \R^n, y \in \R^m$, the Lagrangian associated with $f_1$ is given by
$$l_1(X,y) = \inf_{u \in \R^m} \{ f_1(u,X) - \langle y,u \rangle \} = \iota_{\mathcal{X}}(X) + \sum_{j=1}^m h_j(X,y),$$
with the functions
$$h_j(X,y) = \begin{cases}
    p_jy_j, \text{ if } g_j(X)  \geq y_j, \\
    -\infty, \text{ if } g_j(X) < y_j.
\end{cases}$$

Then, the dual function is given by
$$\psi_1(y) = \min_{X \in \R^n} l_1(X,y) = \begin{cases}
    -\infty, \text{ if } g_j(X) < y_j \text{ for some } X \in \mathcal{X} \text{ and } j\in\{1,..., m\}, \\
    \sum_{j=1}^m p_jy_j, \text{ if } g_j(X) \geq y_j \text{ for every } X \in \mathcal{X} \text{ and } j\in\{1,..., m\},
\end{cases}
$$
and the set of maximizers of $\psi_1$ is
\[
\argmax_{y\in\R^m} \psi_1(y) =\Pi_{j=1}^m Y_j,\quad\mbox{where}\quad
Y_j=\begin{cases}
    \{\min_{x \in \mathcal{X}} g_j(x)\}, \text{ if } p_j \neq 0, \\
    ]-\infty, \min_{x \in \mathcal{X}} g_j(x) ], \text{ if } p_j=0.
\end{cases}\]

\refstepcounter{case}
\textbf{Case \thecase: Lagrangian associated with $f_1$ and $\theta>0$}. \label{case:1.2}

For any $X \in \R^n, y \in \R^m$, the Lagrangian associated with $f_1$ is given by
$$l_1(X,y)= \sum_{j=1}^m h_j(X,y) + \iota_{\mathcal{X}}(X),$$
where
$$h_j(X,y) = \begin{cases}
    \frac{1}{2} \theta p_j^2 + y_jp_j, \text{ if } g_j(X) \in ]y_j+ \theta p_j, +\infty], \\
    p_jg_j(X) - \frac{1}{2\theta} (y_j - g_j(X))^2, \text{ if } g_j(X) \in ]-\infty, y_j + \theta p_j].
\end{cases}$$

Despite not having an explicit formulation of the dual function, we have that
$$
\sup_{y \in \mathbb{R}^m} \psi_1(y) \in [W, V],$$
where $ V := \min_{X \in \mathcal{X}} \varphi(X)$ and $W := \sum_{j=1}^m p_j \min_{X \in \mathcal{X}} g_j(X).$ This result is formally presented in Theorem~\ref{thm:SD1}.

\setcounter{case}{0}
\renewcommand{\thecase}{2.\arabic{case}}
\refstepcounter{case}
\textbf{Case \thecase: Lagrangian associated with $\tilde f_1$ and $\theta = 0$}. \label{case:2.1}

In this case, the Lagrangian takes the form
$$\tilde l_1(X,y) = \iota_{\mathcal{X}}(X) + \sum_{j=1}^m \tilde h_j(X,y),\quad\mbox{where}\quad
\tilde h_j(X,y) = \begin{cases}
    p_jy_j, \text{ if } g_j(X) \geq y_j, \\
    p_jg_j(X), \text{ if } g_j(X) < y_j,
\end{cases}
$$
and the dual function is given by
$$
\tilde \psi_1(y)=\min_{X \in \R^n} \tilde l_1(X,y) = \min_{X \in \mathcal{X}} \sum_{j=1}^m \tilde h_j(X,y) =\min_{X \in \mathcal{X}} \sum_{j=1}^m p_j \min \{g_j(X), y_j\}.
$$
Moreover, we have
\[
\argmax_{y\in\R^m} \tilde\psi_1(y) = \{ y\in\R^m:~ \tilde\psi_1(y)=\min_{X \in \mathcal{X}} \varphi(X)\}.
\]

\refstepcounter{case}
\textbf{Case \thecase: Lagrangian associated with $\tilde f_1$ and $\theta >0$}. \label{case:2.2}

In this case, the Lagrangian takes the form
$$\tilde l_1(X,y) = \iota_{\mathcal{X}}(X) + \sum_{j=1}^m \tilde h_j(X,y),$$
where
$$ \tilde h_j(X,y) = \begin{cases}
    \frac{1}{2} \theta p_j^2 + y_jp_j, \text{ if } g_j(X) \in ]y_j+ \theta p_j, +\infty], \\
    p_jg_j(X) - \frac{1}{2\theta} (y_j - g_j(X))^2, \text{ if } g_j(X) \in [y_j, y_j + \theta p_j], \\
    p_j g_j(X), \text{ if } g_j(X) \in ]-\infty, y_j[.
\end{cases}$$
Regarding the dual function, we have
\[
\argmax_{y\in\R^m} \tilde \psi_1(y) =\Pi_{j=1}^m Y_j,\quad\mbox{where}\quad
Y_j=\begin{cases}
    \R^m_+, \text{ if } p_j \neq 0, \\
   \R, \text{ if } p_j=0.
\end{cases}\]

The calculations can be found in Appendix~\ref{A}.

\begin{remark}
We point out that the same formulas apply for the dual functions, if we consider the approximating Rockafellians $f_1\nup$ and $\tilde f_1 \nup$ and replace  $p$ with $p\nup$.
\end{remark}

\subsubsection{Tightness of approximating tilted Rockafellian}

In this section, we are interested in studying the tightness of the approximating tilted Rockafellian
$$f\nup_{y\nup}(u,X):= f_1\nup(u,X) - \langle y\nup,u \rangle,$$
with $f_1\nup$ defined in~\eqref{AR1}. This is an important property which is used for establishing the hypo-convergence of the dual functions, see  \cite[Thm. 5.1]{deride2024approximations}. It turns out that tightness fails when the regularization parameters $\theta^{\nu} \equiv 0$. However, we still have hypo-convergence of the dual functions, as shown in  Theorem~\ref{thm:hypo1}.

\begin{lemma}[Tightness of the approximating tilted Rockafellian]\label{lem:tight of AR1}
Let $\{f\nup_{y\nup}\}_{\nu \in \N}$ be the approximating tilted Rockafellian associated with $f_1^\nu$.
\begin{enumerate}[label=(\roman*)]
    \item If $\liminf_{\nu \in \N} \theta\nup > 0$ and $y\nup \to y$, then $\{f\nup_{y\nup}\}_{\nu \in \N}$ is tight.
    \item Let $\liminf_{\nu\in\mathbb{N}}\theta^\nu=0$ and $y^\nu \to y$.

\begin{enumerate}
    \item[(a)] If there exist $X\in\mathcal{X}$ and $j\in\{1,\ldots,m\}$ such that $g_j(X)<y_j,$ then the sequence $\{f_{y^\nu}^\nu\}_{\nu\in\mathbb{N}}$ is not tight.

    \item[(b)] If
    $y_j\leq\min_{X\in\mathcal{X}}g_j(X)$ for every $j \in \{1,...,m\}$. Define $\mathcal{J}$ as the set of indices where equality holds and if $y_j^\nu\nearrow y_j,\forall\,j \in \mathcal{J}$, then $\{f_{y^\nu}^\nu\}_{\nu\in\mathbb{N}}$ is tight.
\end{enumerate}
\end{enumerate}
\end{lemma}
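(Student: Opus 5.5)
The approach is to use that $\mathcal X$ is compact (Remark~\ref{rem:compactness-X}), so tightness only concerns the perturbation variable $u$. I will change variables to $w:=p^\nu+u\in[0,+\infty[^m$ and write, for $X\in\mathcal X$,
\[
f^\nu_{y^\nu}(u,X)=\sum_{j=1}^m\Big( w_j\big(g_j(X)-y_j^\nu\big)+y_j^\nu p_j^\nu+\tfrac12\theta^\nu (w_j-p_j^\nu)^2\Big).
\]
Outside $\{w\ge 0\}\times\mathcal X$ the function equals $+\infty$. The functions $g_j$ are continuous, so on $\mathcal X$ there is a constant $C$ with $|g_j|\le C$. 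Since $y^\nu\to y$ and $p^\nu\to p$, the sequences $y^\nu$ and $p^\nu$ are bounded. In every case the global infimum is at most the value at $u=0$, $X\in\mathcal X$, namely $\sum_j p^\nu_j g_j(X)\le 0$.

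\emph{Part (i).} For $\nu$ large we have $\theta^\nu\ge\theta_0>0$. The bounds above give
\[
f^\nu_{y^\nu}(u,X)\ge \tfrac{\theta_0}{2}\|u\|^2-C_1\|u\|-C_2,
\]
with $C_1,C_2$ independent of $\nu$. Because the infimum is bounded above by $0$, every point with value within $\varepsilon$ of the infimum has $\|u\|\le R$ for some $R$ independent of $\nu$. Hence $B_\varepsilon:=\{\|u\|\le R\}\times\mathcal X$ works, and it even contains the minimizers.

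\emph{Part (ii)(a).} Pass to a subsequence with $\theta^\nu\to 0$. Fix the given $X$ and $j$, and set $\delta:=(y_j-g_j(X))/2>0$, so that $g_j(X)-y_j^\nu\le-\delta$ for large $\nu$. Take $u_k=0$ for $k\ne j$ and $w_j=t\ge 0$. The value is then at most $C_3-\delta t+\tfrac12\theta^\nu (t-p_j^\nu)^2$. Choosing $t\approx\delta/\theta^\nu$ (any $t\to\infty$ if $\theta^\nu=0$) shows that $\inf f^\nu_{y^\nu}\le C_3-\delta^2/(4\theta^\nu)\to-\infty$ along the subsequence. On the other hand, on any fixed compact $B$ all terms are uniformly bounded, so $\inf_B f^\nu_{y^\nu}\ge -C_B$ for all $\nu$. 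The tightness inequality therefore fails for every $\varepsilon$ along infinitely many $\nu$, so the sequence is not tight.

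\emph{Part (ii)(b).} For $j\notin\mathcal J$, compactness gives $\min_{\mathcal X}g_j-y_j=:2\delta_j>0$, so $g_j(X)-y_j^\nu\ge\delta_j$ for large $\nu$. For $j\in\mathcal J$, the hypothesis $y_j^\nu\nearrow y_j$ gives $y_j^\nu\le y_j\le g_j(X)$. Hence every coefficient $g_j(X)-y_j^\nu$ is nonnegative on $\mathcal X$. Now fix $R\ge 1\ge p_j^\nu$ and replace $w_j$ by $\min\{w_j,R\}$. This does not increase the linear term, because the coefficient is nonnegative. It does not increase the quadratic term either, because it moves $w_j$ toward $p_j^\nu$. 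So the infimum over the compact set $\{w\in[0,R]^m\}\times\mathcal X$, which lies inside $B:=[-1,R]^m\times\mathcal X$ in the $u$ variable, equals the global infimum. This gives tightness with the same $B$ for every $\varepsilon$.

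The main obstacle is (ii)(a). There the argument needs the liminf to be exactly $0$ and must handle the case $\theta^\nu=0$ separately, and it must certify a uniform lower bound on every compact set. The conventions $0\cdot\infty=0$ and the indicator terms do not interfere, since we work on $\mathcal X\times\{p^\nu+u\ge0\}$.
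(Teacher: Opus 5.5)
Your proof is correct and shares the paper's skeleton: compactness of $\mathcal X$ reduces tightness to bounding the $u$-component uniformly in $\nu$. The execution differs in each part.

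In (i), the paper solves the inner problem explicitly, $\bar u_j^\nu(X)=\max\{(y_j^\nu-g_j(X))/\theta^\nu,\,-p_j^\nu\}$, and bounds this minimizer uniformly in $X$ and $\nu$. You instead use the coercive bound $\tfrac{\theta_0}{2}\|u\|^2-C_1\|u\|-C_2$ together with $\inf f^\nu_{y^\nu}\le 0$. This avoids computing the minimizer and confines all $\varepsilon$-minimizers to a bounded set directly.

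In (ii)(a), the paper only observes that $\bar u_j^\nu(X)\to\infty$ and says tightness fails ``as before''. Escape of minimizers does not by itself contradict tightness, because tightness concerns values. Your estimate $\inf f^\nu_{y^\nu}\le C_3-\delta^2/(4\theta^\nu)\to-\infty$, paired with the uniform lower bound $\inf_B f^\nu_{y^\nu}\ge -C_B$ on each compact $B$, is exactly the quantitative step the paper leaves implicit.

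In (ii)(b), the paper writes $y_j^\nu\le y_j$ for every $j$, but the hypothesis gives this only for $j\in\mathcal J$. Your separate treatment of $j\notin\mathcal J$, via the slack $\delta_j>0$ for large $\nu$ (which the definition of tightness allows), closes that gap. Your truncation argument then replaces the explicit minimizer. Truncating $w_j$ at $p_j^\nu$ instead of at $R$ would recover the paper's smaller box $[-1,0]^m\times\mathcal X$.

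One minor point: boundedness of $p^\nu$ follows from $p^\nu\in\Delta_{m-1}$. The lemma itself does not assume $p^\nu\to p$.
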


\begin{proof}
(i) Set $c := \tfrac{1}{2}\liminf_\nu \theta\nup > 0$ and choose $\nu_0 \in \N$ such that $\theta\nup \geq c$ and $\|y\nup - y\|_\infty \leq 1$ for every $\nu \geq \nu_0$.

For each $X \in \mathcal{X}$ and $\nu \geq \nu_0$, the map $u \mapsto f\nup_{y\nup}(u, X)$ is strongly convex with modulus $\theta\nup \geq c$ on its effective domain $\{u \in \R^m : p\nup + u \geq 0\}$. Its unique minimizer is
\[
\bar u_j\nup(X) = \max\!\left\{\frac{y_j\nup - g_j(X)}{\theta\nup},\ -p_j\nup\right\}, \qquad \forall j\in\{1, \dots, m\}.
\]
From the compactness of $\mathcal{X}$, we have that $G := \max_{j} \max_{X \in \mathcal{X}} |g_j(X)| < \infty$. Combining this with $|y_j\nup| \leq |y_j| + 1$, $\theta\nup \geq c$, and $p_j\nup \in [0,1]$, we have
\[
|\bar u_j\nup(X)| \leq R := \max\!\left\{\frac{G + \|y\|_\infty + 1}{c},\ 1\right\}, \quad \forall \nu \geq \nu_0,\ \forall X \in \mathcal{X},\ \forall j\in\{1, \dots, m\}.
\]
Set $B := [-R, R]^m \times \mathcal{X}$, compact as a product of compact sets. By construction, the infimum of $f\nup_{y\nup}$ over $\R^m \times \R^n$ is attained inside $B$ for every $\nu \geq \nu_0$, so for any $\varepsilon > 0$,
\[
\inf_{(u, X) \in B} f\nup_{y\nup}(u, X) = \inf_{(u, X) \in \R^m \times \R^n} f\nup_{y\nup}(u, X) \leq \inf_{(u, X) \in \R^m \times \R^n} f\nup_{y\nup}(u, X) + \varepsilon.
\]

which implies directly that $\{f\nup_{y\nup}\}_{\nu \in \N}$ is tight.

(ii)(a) Suppose that $\liminf \theta\nup =0$, $y\nup \to y$ and there exist $X \in \mathcal{X}$ such that $g_j(X) < y_j$ for at least one $j \in \{1,...,m\}$.

\begin{itemize}
    \item[$\bullet$] If $\theta^\nu = 0$ for infinitely many terms, then, by selecting the corresponding subsequence (which, for convenience, we continue to index by $\nu$), we obtain that \begin{equation}
\label{rock lem 1}
    f\nup_{y\nup}(u,X) = \sum_{j=1}^m (p_j\nup+u_j)g_j(X) - u_jy_j\nup= \sum_{j=1}^m p_j\nup g_j(X)+ u_j(g_j(X)-y_j\nup).
\end{equation}
Noting that $g_j(X) - y_j\nup < 0$ for $\nu$ large enough, taking $u_j \rightarrow \infty$, the tightness fails.
\item[$\bullet$] If $\theta^\nu > 0$ for all $\nu \geq \nu_0$, since $\liminf_{\nu \in \N} \theta^\nu = 0$,
there exists a subsequence (we continue to index by $\nu$) $\theta^{\nu} \rightarrow 0$. Using the results obtained
in (i) for the optimal value in $u$, we have that
$$\bar u_j^{\nu}(X) = \max \left\{ \frac{y_j^{\nu} - g_j(X)}{\theta^{\nu}},\ -p_j^{\nu} \right\} \rightarrow \infty$$

and, as before, the tightness fails.
\end{itemize}

(ii)(b) Let $y_j \leq \min_{X\in\mathcal{X}} g_j(X)$ for all $j$, and $\{y^\nu\}_{\nu\in\mathbb{N}}\subset\mathbb{R}^m$ is a sequence such that $y_j^\nu \nearrow y_j$ for every $j \in \mathcal{J}$, then
\[
\bar u_j^\nu(X)=
\begin{cases}
-p_j^\nu, & \text{if } \theta^\nu=0,\\[2mm]
\displaystyle
\max\!\left\{
\frac{y_j^\nu-g_j(X)}{\theta^\nu},
\,-p_j^\nu
\right\}, & \text{if } \theta^\nu>0.
\end{cases}
\]
Since $y_j^\nu\le y_j\le \min_{X\in\mathcal{X}} g_j(X)$, it follows that
\[
\bar u_j^\nu(X)\in[-p_j^\nu,0]\subset[-1,0]
\]
in both cases. Therefore, taking $B=\mathcal{X}\times[-1,0],$ the sequence $\{f_{y^\nu}^\nu\}$ is tight.
\end{proof}

\begin{remark}\label{rem:tightness-tilde}
    For the approximating Rockafellian $\tilde f_1\nup$, defined in~\eqref{AR1.1}, the sequence $\{\tilde f\nup_{y\nup}\}_{\nu \in \N}$ is also tight for any sequences $\{y\nup\} \subset \R^m$ and $\{\theta^\nu\}\subset\R_+$. Indeed, the same argument used for $\mathcal{X}$ in Lemma~\ref{lem:tight of AR1} applies. In the perturbation space, we have
    $-p\nup_j \leq u_j \leq 0$.  Thus, since $p^\nu \in \Delta_{m-1}$, we define $U := [-1,0]^m$ and set $B := U \times \mathcal{X}.$ Then, for  all $\nu \in\N$ and $\varepsilon>0$, we obtain
$$\inf_{(u,X) \in B} f_{y\nup}\nup(u,X) \leq \inf_{(u,X) \in \R^m \times \R^n} f\nup_{y\nup}(u,X) + \varepsilon.$$
\end{remark}

\subsubsection{Hypo-convergence of dual functions}

\begin{theorem}[Hypo-convergence of Dual Functions]\label{thm:hypo1}
Let $\{p\nup\}_{\nu \in \N}$ be a sequence of probabilities converging to $p$ and let $\{\theta\nup\}_{\nu \in \N}$ be a sequence of regularizing parameters converging to $\theta$.
\begin{enumerate}[label=(\roman*)]
\item If $\theta = 0$, then the approximating dual function $\psi_1^\nu$ associated with $f_1^\nu$ hypo-converges to the dual function $\psi_1$ associated with $f_1$ (as in Case~\ref{case:1.1}).
\item If $\theta > 0$, then the approximating dual function $\psi_1^\nu$ associated with $f_1^\nu$ hypo-converges to the dual function $\psi_1$ associated with $f_1$ (as in Case~\ref{case:1.2}).
\item The approximating dual function $\tilde\psi_1^\nu$ associated with $\tilde f_1^\nu$ hypo-converges to the dual function $\tilde\psi_1$ associated with $\tilde f_1$ (as in Cases~\ref{case:2.1} and \ref{case:2.2}).
\end{enumerate}
\end{theorem}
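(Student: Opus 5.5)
The plan is to verify hypo-convergence directly in (i), and in (ii)–(iii) to reduce it to the general result \cite[Thm.~5.1]{deride2024approximations}. That theorem states that epi-convergence of the Rockafellians, together with tightness of the approximating tilted Rockafellians along convergent multiplier sequences, yields hypo-convergence of the dual functions $\psi^\nu(y)=\inf_{(u,X)} f^\nu_{y}(u,X)$. In each case the liminf half only requires $\limsup_\nu \psi^\nu(y^\nu)\le\psi(y)$ for every $y^\nu\to y$. This half follows from epi-convergence alone. Given $y^\nu\to y$ and any $(u,X)$, take the recovery sequence $(u^\nu,X^\nu)$ of Proposition~\ref{prop:epi1}. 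Then
\[
\psi^\nu(y^\nu)\le f^\nu(u^\nu,X^\nu)-\langle y^\nu,u^\nu\rangle\to f(u,X)-\langle y,u\rangle ,
\]
and taking the infimum over $(u,X)$ gives the claim. The work therefore lies in the limsup half: for each $y$ we must exhibit $y^\nu\to y$ with $\liminf_\nu\psi^\nu(y^\nu)\ge\psi(y)$.

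\textbf{(ii) and (iii).} For $\theta>0$ we have $\liminf\theta^\nu=\theta>0$, so Lemma~\ref{lem:tight of AR1}(i) gives tightness for every $y^\nu\to y$. For $\tilde f_1^\nu$, Remark~\ref{rem:tightness-tilde} gives tightness for any $\theta^\nu$. Combining these with Proposition~\ref{prop:epi1}, \cite[Thm.~5.1]{deride2024approximations} applies with the constant choice $y^\nu=y$. Concretely, epi-convergence plus tightness makes the infima $\inf f^\nu_{y}$ converge to $\inf f_y$, which is the required inequality. The tilt $-\langle y^\nu,u\rangle$ preserves epi-convergence because it is a continuously converging perturbation.

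\textbf{(i), the main obstacle.} Here $\theta=0$, so tightness can fail by Lemma~\ref{lem:tight of AR1}(ii)(a). I would instead use the explicit formula of Case~\ref{case:1.1}. That formula is valid for $f_1^\nu$ whenever $\theta^\nu=0$. When $\theta^\nu>0$ I would use the Case~\ref{case:1.2} form, noting that $\psi_1^\nu\ge$ the $\theta^\nu=0$ value, since the quadratic term is nonnegative. Fix $y$.
\begin{itemize}
\item If $\psi_1(y)=-\infty$, any sequence works.
\item Otherwise $y_j\le m_j:=\min_{\mathcal X}g_j$ for all $j$, and $\psi_1(y)=\sum_j p_jy_j$. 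Choose $y^\nu_j\nearrow y_j$, for instance $y_j^\nu=y_j-1/\nu$, which matches the hypothesis of Lemma~\ref{lem:tight of AR1}(ii)(b). Then $g_j(X)\ge y_j^\nu$ on $\mathcal X$, so the Case~\ref{case:1.1} formula gives $\psi_1^\nu(y^\nu)\ge\sum_j p^\nu_jy^\nu_j\to\sum_jp_jy_j$.
\end{itemize}
For $\theta^\nu>0$, I would check directly from $h_j$ that each term is at least $p_j^\nu y_j^\nu$ when $g_j\ge y_j^\nu$. Alternatively, I would invoke tightness from Lemma~\ref{lem:tight of AR1}(ii)(b) and rerun the Thm.~5.1 argument.

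The delicate point is that $\psi_1$ equals $-\infty$ off the box $\prod_j\,]-\infty,m_j]$, so the boundary $y_j=m_j$ must be approached from inside. This is exactly what the monotone choice $y^\nu\nearrow y$ handles.
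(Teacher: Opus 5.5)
Your proposal is correct. For (ii) and (iii) it matches the paper's proof: epi-convergence from Proposition~\ref{prop:epi1}, tightness from Lemma~\ref{lem:tight of AR1}(i) (since $\liminf_\nu\theta^\nu=\theta>0$) or from Remark~\ref{rem:tightness-tilde}, and then \cite[Thm.~5.1(a)]{deride2024approximations}.

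For (i) you take a different route. The paper keeps the same mechanism throughout. It picks $y^\nu\nearrow y$ for $y\in\operatorname{dom}(-\psi_1)$, gets tightness of the tilted Rockafellians from Lemma~\ref{lem:tight of AR1}(ii)(b), and invokes Thm.~5.1(a) again. The one-sided approach is what makes that lemma applicable: approaching a boundary component $y_j=m_j$ from outside lets the minimizers in $u$ escape.

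You instead split the proof into two halves:
\begin{itemize}
\item The upper-bound half comes directly from the recovery sequences of Proposition~\ref{prop:epi1}.
\item The lower-bound half comes from the closed form of Case~\ref{case:1.1}. The term $\tfrac12\theta^\nu\|u\|_2^2\ge 0$ can only raise $f_1^\nu$, and hence $\psi_1^\nu$. So $\psi_1^\nu(y^\nu)\ge\sum_j p_j^\nu y_j^\nu$ whenever $y^\nu\le m$.
\end{itemize}

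This is more elementary and self-contained. It needs neither tightness nor the external theorem in this case. It also handles mixed sequences, with $\theta^\nu=0$ for some $\nu$ and $\theta^\nu>0$ for others, without the subsequence split used in the lemma. On the other hand, it depends on having an explicit dual formula. The paper's tightness-based route is uniform across the cases and carries over to Rockafellians without a closed-form dual.

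One small point: in your direct argument the choice $y^\nu_j=y_j-1/\nu$ is not needed. The box $\prod_j\,]-\infty,m_j]$ does not depend on $\nu$, so the constant sequence $y^\nu=y$ already gives $\psi_1^\nu(y)\ge\sum_j p_j^\nu y_j\to\psi_1(y)$. The only thing that matters is not leaving the box.
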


\begin{proof}
(i) We note that $\operatorname{dom}(-\psi) = \left\{y \in \mathbb{R}^m : y_j \leq \min_{X \in \mathcal{X}} g_j(X) \ \forall j \in \{1,...,m\} \right\}$.
From Proposition~\ref{prop:epi1}, the epi-convergence $f_1^\nu \xrightarrow{e} f_1$ follows.
Furthermore, considering any sequence $\{y^\nu\}_{\nu \in \mathbb{N}} \subset \mathbb{R}^m$ such that
$y_j^\nu \nearrow y_j$ with $y \in \operatorname{dom}(-\psi)$, we have that $\{f^\nu_{y^\nu}\}$
is tight by Lemma~\ref{lem:tight of AR1}. Finally, hypo-convergence of the dual functions
then follows from \cite[Theorem~5.1(a)]{deride2024approximations}.

(ii)-(iii) From Proposition~\ref{prop:epi1}, the epi-convergence $f_1^\nu \xrightarrow{e} f_1$ follows. Moreover, tightness of $\{f^\nu_{y^\nu}\}$ holds in Case~\ref{case:1.2} by Lemma~\ref{lem:tight of AR1} (since $\liminf_\nu \theta^\nu > 0$) and in Cases~\ref{case:2.1} and \ref{case:2.2} by Remark~\ref{rem:tightness-tilde}. Hypo-convergence of the dual functions then follows from \cite[Theorem~5.1(a)]{deride2024approximations}.
\end{proof}

\subsubsection{Strong duality}

\begin{theorem}[Strong Duality]\label{thm:SD1}
Let $V := \min_{X \in \mathcal{X}} \varphi(X)$ denote the primal value, and define the \emph{wait-and-see value}
\[
W := \sum_{j=1}^m p_j \min_{X \in \mathcal{X}} g_j(X).
\]
\begin{enumerate}[label=(\roman*)]
\item In Cases~\ref{case:2.1} and \ref{case:2.2}, the dual value equals $V$. Strong duality holds.
\item In Case~\ref{case:1.1}, the dual value equals $W$, and the duality gap $V - W$ is non-negative. The gap vanishes if and only if there exists $X^* \in \mathcal{X}$ such that
\[
g_j(X^*) = \min_{X \in \mathcal{X}} g_j(X) \quad \text{for every } j \text{ with } p_j > 0.
\]
\item In Case~\ref{case:1.2}, the dual value $\sup_y \psi_1(y)$ lies in $[W, V]$. Strong duality holds whenever a common minimizer $X^*$ as in (ii) exists; in particular, the regularization term does not introduce a new duality gap beyond the wait-and-see gap of Case~\ref{case:1.1}.
\end{enumerate}
\end{theorem}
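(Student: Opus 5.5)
The plan is to use the closed-form Lagrangians and dual functions computed in Cases~\ref{case:1.1}--\ref{case:2.2}, together with the elementary inequalities $\psi(y)\le \inf_X f(0,X)=V$ (weak duality, from $u=0$) and $V=\min_{X\in\mathcal X}\sum_j p_j g_j(X)\ge \sum_j p_j\min_{X\in\mathcal X} g_j(X)=W$. The latter holds because $p_j\ge 0$ and $\mathcal X$ is compact (Remark~\ref{rem:compactness-X}), so every minimum exists. For each case it then suffices to identify $\sup_y\psi$, or to bracket it between $W$ and $V$.

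\textbf{Case (i).} In Case~\ref{case:2.1} we have $\tilde\psi_1(y)=\min_{X\in\mathcal X}\sum_j p_j\min\{g_j(X),y_j\}\le V$. Since each $g_j$ is bounded on $\mathcal X$, we can take $y_j\ge \max_{X\in\mathcal X} g_j(X)$ for every $j$. Then $\min\{g_j(X),y_j\}=g_j(X)$ on $\mathcal X$, so $\tilde\psi_1(y)=V$ and the supremum is attained.

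In Case~\ref{case:2.2} the same choice of large $y$ places every $X\in\mathcal X$ in the third branch $g_j(X)<y_j$ of $\tilde h_j$. Hence $\tilde l_1(X,y)=\varphi(X)$ on $\mathcal X$, which again gives $\tilde\psi_1(y)=V$. Together with weak duality, this proves (i). (This is also consistent with the argmax description given for Case~\ref{case:2.2}.)

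\textbf{Case (ii).} By the formula in Case~\ref{case:1.1}, $\psi_1(y)=\sum_j p_j y_j$ when $y_j\le m_j:=\min_{\mathcal X} g_j$ for all $j$, and $\psi_1(y)=-\infty$ otherwise. Since $p\ge0$, the supremum equals $\sum_j p_j m_j=W$, attained at $y=(m_j)_j$. Nonnegativity of $V-W$ is the inequality recorded above.

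For the equivalence, write
\[
V-W=\min_{X\in\mathcal X}\sum_j p_j\bigl(g_j(X)-m_j\bigr).
\]
This is a minimum over a compact set of a continuous function that is a sum of nonnegative terms. It vanishes if and only if some $X^*$ makes every term with $p_j>0$ equal to zero, that is, $g_j(X^*)=m_j$ for all $j$ with $p_j>0$.

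\textbf{Case (iii).} Weak duality gives the upper bound $V$. For the lower bound I would evaluate $\psi_1$ at $y_j=m_j-\theta p_j$. For $X\in\mathcal X$ we have $g_j(X)\ge m_j=y_j+\theta p_j$, so each $h_j$ is in its first branch except at the boundary point. There the second branch gives
\[
p_j g_j-\tfrac{1}{2\theta}(\theta p_j)^2=p_jm_j-\tfrac12\theta p_j^2,
\]
which coincides with the first branch value $\tfrac12\theta p_j^2+y_jp_j=p_jm_j-\tfrac12\theta p_j^2$, as it must by continuity. So $h_j=p_jm_j-\tfrac12\theta p_j^2$ on $\mathcal X$, and $\psi_1(y)=W-\tfrac12\theta\|p\|^2$. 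This is slightly below $W$, so the simple choice does not give the bound.

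\textbf{Main obstacle: the lower bound $W$ in Case (iii).} I would instead minimize the tilted Rockafellian directly in $u$. Since $\phi(u)=\inf_X f_1(u,X)$ is convex and finite near $0$ in the relevant directions, I would use
\[
\sup_y\psi_1=\phi^{**}(0)=\liminf_{u\to0}\phi(u)
\]
from the preliminaries. Then $\phi^{**}(0)\ge \tilde\phi(0)$, where $\tilde\phi$ is the minorant obtained by exchanging $\inf_X$ and the sum:
\[
\phi(u)\ge \sum_j(p_j+u_j)m_j+\tfrac12\theta\|u\|^2 .
\]
The right-hand side is continuous in $u$ with value $W$ at $u=0$. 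Hence $\liminf_{u\to0}\phi(u)\ge W$, and the dual value lies in $[W,V]$.

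Finally, when a common minimizer $X^*$ exists, part (ii) gives $W=V$, so the sandwich $[W,V]$ collapses and strong duality holds in Case~\ref{case:1.2}.
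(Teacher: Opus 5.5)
Parts (i) and (ii) are correct and follow the paper's argument: weak duality at $u=0$ combined with the closed-form dual functions. In (i), your large $y$ does the same job as the paper's $y=0$.

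The gap is the lower bound in (iii). You assert that $\phi(u)=\inf_X f_1(u,X)$ is convex and then invoke $\sup_y\psi_1=\phi^{**}(0)=\liminf_{u\to0}\phi(u)$. But $f_1$ is not jointly convex, because the coupling $u_j g_j(X)$ is bilinear. The map $u\mapsto\min_{X\in\mathcal X}\sum_j(p_j+u_j)g_j(X)$ is a pointwise minimum of affine functions of $u$, hence concave, so $\phi$ is in general not convex.

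The identity you use genuinely fails here. $\phi$ is continuous on its domain $\{u:p+u\ge0\}$, being a minimum over the compact set $\mathcal X$ of a jointly continuous function. Hence $\liminf_{u\to0}\phi(u)=\phi(0)=V$. Your chain would therefore give $\sup_y\psi_1=V$ for every $\theta\ge0$, in particular for $\theta=0$. That contradicts part (ii) whenever no common minimizer exists. The inequality $\liminf_{u\to0}\phi(u)\ge W$ that you derive is true, but it says nothing about the dual value.

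The repair is to put the convexity on the minorant, not on $\phi$. The identity $\sup_y\psi_1=\phi^{**}(0)$ needs no convexity, since $\psi_1=-\phi^*$. Your minorant $\tilde\phi(u):=\sum_j(p_j+u_j)m_j+\tfrac12\theta\|u\|^2+\iota_{[0,+\infty[^m}(p+u)$ is proper, lsc and convex, and $\phi\ge\tilde\phi$. Therefore $\phi^{**}\ge\tilde\phi^{**}=\tilde\phi$, and $\sup_y\psi_1\ge\tilde\phi(0)=W$.

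This is the perturbation-space version of the paper's argument. The paper bounds $\psi_1(y)\ge L(y):=\sum_j\min_X h_j(X,y)$, and one checks that $L=-\tilde\phi^*$, so maximizing over $y$ gives $W$.

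More directly, your test point should have been $y=m$ rather than $y=m-\theta p$:
\begin{itemize}
\item Each $h_j(\cdot,y)$ is nondecreasing in $g_j$, since for every admissible $u_j\ge-p_j$ the coefficient $p_j+u_j$ of $g_j$ is nonnegative.
\item At $g_j=m_j$ and $y_j=m_j$ the inner minimizer is $u_j=0$.
\item Hence $h_j(X,m)\ge p_jm_j$ on $\mathcal X$, and $\psi_1(m)\ge W$.
\end{itemize}
The shift by $-\theta p_j$ is exactly what cost you $\tfrac12\theta\|p\|^2$.

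Once the lower bound is repaired, your collapse of $[W,V]$ under a common minimizer is correct. It is also slightly cleaner than the paper's separate evaluation at $y=m$.
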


\begin{proof}
(i) By the calculations of Cases~\ref{case:2.1} and \ref{case:2.2}, $0 \in \argmax \tilde\psi_1$. Evaluating $\tilde\psi_1$ at $y = 0$ and using $g_j(X) \le 0$ on $\mathcal{X}$, both case formulas reduce to $\tilde\psi_1(0) = \min_{X \in \mathcal{X}} \sum_j p_j g_j(X) = V$. Hence $\sup_y \tilde\psi_1(y) = V$.

(ii) The argmax of $\psi_1$ in Case~\ref{case:1.1} consists of vectors $y$ with $y_j = \min_{X \in \mathcal{X}} g_j(X)$ for every $j$ with $p_j > 0$, whence
\[
\sup_y \psi_1(y) = \sum_{j=1}^m p_j \min_{X \in \mathcal{X}} g_j(X) = W.
\]
The inequality $V \ge W$ follows from $\sum_j p_j g_j(X) \ge \sum_j p_j \min_{X'} g_j(X')$ for every $X \in \mathcal{X}$, by taking the infimum on the left.

Suppose first that some $X^* \in \mathcal{X}$ satisfies $g_j(X^*) = \min_{X} g_j(X)$ for every $j$ with $p_j > 0$. Then $V \le \sum_j p_j g_j(X^*) = W$, which combined with $V \ge W$ gives $V = W$. Conversely, if $V = W$ and $X^* \in \argmin\varphi$, then $0 = V - W = \sum_{j: p_j > 0} p_j(g_j(X^*) - \min_X g_j(X))$, and since each summand is non-negative with $p_j > 0$, we conclude $g_j(X^*) = \min_X g_j(X)$ for every $j$ with $p_j > 0$.

(iii) The inequality $\sup_y \psi_1(y) \le V$ is weak duality. For the lower bound, the inequality $\psi_1(y) \ge L(y) := \sum_j \min_X h_j(X, y)$ (pointwise minimization) combined with the maximum-over-$y$ calculation and defining $m_j$ as $m_j :=\min_X g_j(X)$, we obtain $\sup_y L(y) = \sum_j p_j m_j = W$ gives $\sup_y \psi_1(y) \ge W$.

Suppose now that a common minimizer $X^*$ exists, that is, $g_j(X^*) = m_j$ for every $j$ with $p_j > 0$. Choose $y = m$ (so $y_j = m_j$). Then at $X^*$, the lower-region branch of $h_j$ gives $h_j(X^*, m) = p_j m_j$. For any other $X \in \mathcal{X}$, $g_j(X) \ge m_j$, and since $h_j(\cdot, m)$ is non-decreasing in $g_j$, $h_j(X, m) \ge p_j m_j$. Hence $\psi_1(m) = \sum_j p_j m_j = W = V$, establishing strong duality.
\end{proof}

\begin{remark}[Strict duality gap in Case~\ref{case:1.1}]\label{rem:strict-gap}
The duality gap in Case~\ref{case:1.1} is strictly positive whenever the family $\{g_j : p_j > 0\}$ does not admit a common minimizer in $\mathcal{X}$. This is the generic situation: if, at some parent node $i \in I_T$, two children with positive probability favor different optimal allocations of $X_T(i)$, then no common minimizer exists. The gap vanishes only in degenerate cases: for example, when, at every parent node, the price vectors $S_{T+1}(j)$ corresponding to all children with positive probability induce the same optimal allocation.
\end{remark}

\begin{lemma}[Vanishing of the duality gap as $\theta \to \infty$]\label{lem:theta-limit}
Let $D(\theta) := \sup_{y \in \R^m} \psi_1(y)$ denote the Case~\ref{case:1.2} dual value as a function of the regularization parameter $\theta > 0$. Then $D(\theta) \to V$ as $\theta \to \infty$.
\end{lemma}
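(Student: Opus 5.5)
Since $D(\theta) \le V$ for every $\theta>0$ by weak duality (Theorem~\ref{thm:SD1}(iii)), it suffices to produce, for each $\theta$, a multiplier $y$ with $\psi_1(y) \ge V - \varepsilon(\theta)$, where $\varepsilon(\theta)\to 0$. We take $y = 0$ and show $\psi_1(0) \ge V - C/\theta$ for a constant $C$ independent of $\theta$. Together with the upper bound $D(\theta)\le V$, this yields $V - C/\theta \le D(\theta) \le V$ and hence the claim.

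\textbf{Step 1 (monotonicity).} Since $f_1$ is nondecreasing in $\theta$ pointwise, so is $\psi_1(y)$ for each $y$. Hence $D(\theta)$ is nondecreasing and bounded above by $V$, so the limit exists. This step is not strictly needed, but it organizes the argument.

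\textbf{Step 2 (explicit bound on $\psi_1(0)$).} At $y=0$ we have $g_j(X)\le 0 < \theta p_j$ whenever $p_j>0$, so by the formula for $h_j$ in Case~\ref{case:1.2} we are always in the lower branch:
\[
h_j(X,0) = p_j g_j(X) - \tfrac{1}{2\theta} g_j(X)^2 .
\]
When $p_j=0$, the two branches agree at $g_j=0$. In the upper branch the value is $0$, and in the lower branch it is $-\tfrac{1}{2\theta}g_j^2$; in either case $h_j(X,0) \ge p_j g_j(X) - \tfrac{1}{2\theta}g_j(X)^2$. Summing over $j$ gives, for $X \in \mathcal{X}$,
\[
l_1(X,0) \ge \varphi(X) - \frac{1}{2\theta}\sum_{j=1}^m g_j(X)^2 \ge V - \frac{mG^2}{2\theta},
\]
where $G := \max_j \max_{X\in\mathcal{X}} |g_j(X)|$ is finite by compactness of $\mathcal{X}$ (Remark~\ref{rem:compactness-X}). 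This is the same constant already used in the proof of Lemma~\ref{lem:tight of AR1}(i). Taking the minimum over $X$ gives $\psi_1(0) \ge V - mG^2/(2\theta)$.

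\textbf{Step 3 (conclusion).} Combining Step 2 with weak duality,
\[
V - \frac{mG^2}{2\theta} \le \psi_1(0) \le D(\theta) \le V,
\]
and letting $\theta\to\infty$ gives $D(\theta)\to V$.

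\textbf{Main obstacle.} The delicate point is verifying the branch bookkeeping in the formula for $h_j$, in particular for the indices with $p_j = 0$ (the critical scenarios). If the case formula in the paper is ambiguous there, I would recompute directly:
\[
\inf_{u_j\ge -p_j}\Bigl\{(p_j+u_j)g_j + \tfrac{\theta}{2}u_j^2\Bigr\} \ge p_j g_j + \inf_{u_j\in\R}\Bigl\{u_j g_j + \tfrac{\theta}{2}u_j^2\Bigr\} = p_j g_j - \frac{g_j^2}{2\theta},
\]
which gives the required lower bound without any case analysis.
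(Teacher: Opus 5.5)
Your proof is correct and takes the same approach as the paper. Both evaluate the dual at $y=0$, land in the lower branch of $h_j$, bound the correction $\sum_j g_j(X)^2/(2\theta)$ by $mG^2/(2\theta)$ using compactness of $\mathcal{X}$, and sandwich $D(\theta)$ between $V-mG^2/(2\theta)$ and $V$ via weak duality. Your observation that $g_j\le 0$ on $\mathcal{X}$ puts every index, including those with $p_j=0$, in the lower branch for all $\theta>0$ is a small sharpening: it makes the paper's threshold $\theta_0$ unnecessary and covers explicitly the $p_j=0$ indices that the paper's justification only covers implicitly.
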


\begin{proof}
Since $\mathcal{X}$ is compact and each $g_j$ is continuous, $G := \max_j \max_{X \in \mathcal{X}} |g_j(X)| < \infty$. Choose $\theta_0$ large enough that $\theta_0 p_j \geq G$ for every $j$ with $p_j > 0$; then for all $\theta \geq \theta_0$ and every $X \in \mathcal{X}$, $g_j(X) \leq G \leq \theta p_j \leq y_j + \theta p_j$ at $y=0$, so evaluating $h_j(\cdot, 0)$ falls in the lower-region branch for every $j$ and every $X \in \mathcal{X}$. Hence
\[
\psi_1(0) = \min_{X \in \mathcal{X}} \sum_{j=1}^m \left( p_j g_j(X) - \frac{g_j(X)^2}{2\theta} \right) = \min_{X \in \mathcal{X}} \left( \varphi(X) - \sum_{j=1}^m \frac{g_j(X)^2}{2\theta} \right).
\]
The correction term is bounded by $mG^2/(2\theta)$ uniformly over $X \in \mathcal{X}$, so
\[
V - \frac{mG^2}{2\theta} \leq \psi_1(0) \leq V,
\]
for all $\theta \geq \theta_0$. Since $D(\theta) \geq \psi_1(0)$ (evaluating the sup at $y=0$) and $D(\theta) \leq V$ (weak duality), we conclude $V - mG^2/(2\theta) \leq D(\theta) \leq V$, and letting $\theta \to \infty$ gives $D(\theta) \to V$.
\end{proof}

\begin{remark}[Regularization and the duality gap in Case~\ref{case:1.2}]\label{rem:theta-gap}
Theorem~\ref{thm:SD1}(iii) shows that the regularization parameter $\theta$ does not by itself create a duality gap: when a common minimizer exists, strong duality holds for every $\theta \ge 0$. In the absence of a common minimizer, Lemma~\ref{lem:theta-limit} shows that the gap closes as $\theta \to \infty$, i.e., $D(\theta) \to V$. We conjecture that $D(\theta)$ is in addition strictly increasing in $\theta$ for $\theta>0$, so that the gap $V-D(\theta)$ decreases monotonically from $V-W$ toward $0$ without reaching either endpoint at finite $\theta$. We do not have a proof of strict monotonicity, nor of a uniform lower bound on $V - D(\theta)$ away from $0$ for finite $\theta$; a complete characterization of $D(\theta)$ is left for future work.
\end{remark}

\subsubsection{Shadow prices} \label{sec:4.1.3}

In this section, we analyze the economic interpretation of the dual variables viewed as shadow prices associated with the constraints. Recall that the primal problem is in minimization form, so dual variables expressing prices appear with the opposite sign.

\emph{Case~\ref{case:1.1}.} Let $y^*$ be an optimal solution of the dual problem. By Theorem~\ref{thm:SD1}(ii),
\[
-\sup_{y \in \R^m} \psi_1(y) = -W = \sum_{j=1}^m p_j \max_{X \in \mathcal{X}} \langle S_{T+1}(j), X_T(j^-)\rangle,
\]
so $-W$ is the expected value of the wait-and-see strategy: in each scenario $j$, the maximum portfolio value attainable when the realization is observed before the allocation. Theorem~\ref{thm:SD1}(ii) thus identifies the duality gap as the \emph{expected value of perfect information}: the cost of being unable to anticipate future realizations. By Remark~\ref{rem:strict-gap}, this gap is strictly positive in generic markets.

For each $j$ with $p_j > 0$, the component $-y_j^* = \max_{X \in \mathcal{X}} \langle S_{T+1}(j), X_T(j^-)\rangle$ is the maximum portfolio return attainable in scenario $j$, the value of having perfect information conditional on scenario $j$. For $j$ with $p_j = 0$, the variable $y_j$ does not contribute to the dual value and admits any value in $(-\infty, \min_{X\in\mathcal{X}} g_j(X)]$.

\emph{Case~\ref{case:1.2}.} The regularization term $\frac{1}{2}\theta\|u\|^2_2$ penalizes deviations of the probability vector from $p$. By Remark~\ref{rem:theta-gap}, the regularization does not introduce a new duality gap; rather, it partially closes the wait-and-see gap of Case~\ref{case:1.1}. Economically, the penalty makes the dual problem less responsive to extreme reallocations of probability mass, dampening the wait-and-see anticipation effect.

\emph{Case~\ref{case:2.1}.} In this case, after some simple computations, it can be proved that
\begin{equation*}
\left\{y \in \mathbb{R}^m :y_j \geq 0,~ \forall j \in J_+\right\}
\subset
\argmax_{y\in\R^m} \tilde{\psi_1}(y)
\subset
\left\{y \in \mathbb{R}^m : y_j \geq \max_{X \in \argmin \varphi}
g_j(X),~ \forall j \in J_+ \right\}.
\end{equation*}
This shows that the dual is always solvable at $y^* \ge 0$. Moreover, due to Theorem~\ref{thm:SD1}(i), the dual admits solutions with negative components. For such components, the value $-y_j^* > 0$ represents the maximum price the investor is willing to pay for additional information in scenario $j$.

\emph{Case~\ref{case:2.2}.} Here $\argmax \tilde\psi = \mathbb{R}^m_+$. The regularization term, combined with the constraint $u \le 0$, removes the incentive to acquire additional information: every dual optimum satisfies $y^* \ge 0$, and the implicit information price is zero.

By Theorem~\ref{thm:SD1}(i), Cases~\ref{case:2.1} and \ref{case:2.2} exhibit no duality gap. The constraint $u \le 0$, which forbids redistributing probability mass to scenarios with higher expected payoff, eliminates the wait-and-see channel from the dual.

\subsection{Claim perturbation}

\subsubsection{Dual function}
\label{sec:4.2.1}

For the claim perturbation, the dual function can be computed directly from a linear programming reformulation of~\eqref{P}, without resorting to a Lagrangian.

Let $I_T := \{j^- : j = 1, \dots, m\}$ denote the set of parent nodes of the terminal scenarios, and recursively $I_t := \{i^- : i \in I_{t+1}\}$ for $t = T-1, \dots, 0$, so $|I_0| = 1$ (the root) and $|I_t| = n_t$. We label the nodes of the event tree level by level from $I_0$ to $I_T$, and within each level from top to bottom, indexing the strategy $X$ blockwise by $\bigcup_{t=0}^T I_t$.

In this notation,~\eqref{P} reads as the linear program
\[
\min_{X \in \R^n} c^T X \quad \text{subject to} \quad AX \leq b, \quad X \geq 0,
\]
where $c \in \R^n$, $b \in \R^{N_T}$ (with $N_T := 1 + \sum_{t=1}^T n_t$), and $A \in \R^{N_T \times n}$ are given blockwise by
\[
c_i = \begin{cases}
    0, & i \in I_t,\ t \in \{0, 1, \dots, T-1\}, \\
    -\sum_{j \in i^+} p_j S_{T+1}(j), & i \in I_T,
\end{cases}
\qquad
b = \begin{pmatrix} G_0 \\ (G_1(i))_{i \in I_1} \\ \vdots \\ (G_T(i))_{i \in I_T} \end{pmatrix},
\]
\[
A_{k, j} = \begin{cases}
    S_0^T(1), & k = j = 1, \\
    S_t^T(k), & k \in I_t,\ j = k, \\
    -S_t^T(k), & k \in I_t,\ j = k^-, \\
    0, & \text{otherwise}.
\end{cases}
\]
Under this construction, $c^T X = \sum_{j=1}^m p_j \langle -S_{T+1}(j), X_T(j^-)\rangle = \varphi(X)$ and $\{X \geq 0 : AX \leq b\} = \mathcal{X}$.

The Rockafellian~\eqref{R2} perturbs the right-hand side of the constraints:
\[
f_2(u, X) = c^T X + \iota_{\{X \geq 0,\ AX \leq b + u\}}(X).
\]
The associated dual function follows by direct computation:
\begin{align*}
\psi_2(y) = \inf_{u, X}\{f_2(u, X) - \langle y, u\rangle\}
&= \inf_{X \geq 0} \Bigl( c^T X - \sup_{u \geq AX - b} \langle y, u\rangle \Bigr) \\
&= y^T b - \iota_{\{A^T y \leq c\}}(y) - \iota_{\{y \leq 0\}}(y),
\end{align*}
where the second equality uses $\sup_{u \geq AX - b} \langle y, u\rangle = \langle y, AX - b\rangle$ for $y \leq 0$ ($+\infty$ otherwise), and the inner minimization over $X \geq 0$ is bounded below iff $c - A^T y \geq 0$.

\begin{remark}[Dual function of the approximating Rockafellian]\label{rem:psi-nu-R2}
The dual function of~\eqref{AR2} is
\[
\psi_2^\nu(y) = y^T b - \iota_{\{A^T y \leq c^\nu\}}(y) - \iota_{\{y \leq 0\}}(y),
\]
where $c^\nu \in \R^n$ is given blockwise by $c_i^\nu = 0$ for $i \in I_t$ with $t < T$, and $c_i^\nu = -\sum_{j \in i^+} p_j^\nu S_{T+1}(j)$ for $i \in I_T$.
\end{remark}

\subsubsection{Tightness of the Approximating Rockafellian}

In this section, we are interested in studying the tightness of the approximating tilted Rockafellian
$$f\nup_{y\nup}(u,X):= f_2\nup(u,X) - \langle y\nup,u \rangle,$$
with $f_2\nup$ defined in~\eqref{AR2}.

\begin{lemma}[Tightness of $\{f^\nu_{y^\nu}\}$]\label{lem:tightness of AR2}
Let $\{y\nup\} \subset \R^{N_T}$ be a sequence satisfying $y\nup \leq 0$ and $A^T y\nup \leq c\nup$ for all $\nu \geq \nu_0$, and let $\{f\nup_{y\nup}\}_{\nu \in \N}$ denote the tilted approximating Rockafellian associated with~\eqref{AR2}. Then $\{f\nup_{y\nup}\}_{\nu \in \N}$ is tight.
\end{lemma}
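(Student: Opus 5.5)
We will exhibit, for every $\nu \ge \nu_0$, a single compact set $B \subset \R^{N_T} \times \R^n$ that contains a minimizer of $f^\nu_{y^\nu}$. Tightness then holds trivially for every $\varepsilon>0$, exactly as in the proof of Lemma~\ref{lem:tight of AR1}(i).

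\emph{Step 1: eliminate $u$ for fixed $X$.} Fix $X \in \mathcal{X}$, which is compact by Remark~\ref{rem:compactness-X}, and minimize
\[
u \mapsto -\langle y^\nu, u\rangle + \iota_{\{AX \le b+u\}}(X)
\]
over the set $\{u : u \ge AX - b\}$. Since $y^\nu \le 0$, we have $-\langle y^\nu,u\rangle = \sum_k |y^\nu_k| u_k$, which is nondecreasing in each coordinate. Hence $\bar u^\nu(X) := AX - b$ is a minimizer for every $\nu$, regardless of which coordinates of $y^\nu$ vanish. This gives
\[
\inf_u f^\nu_{y^\nu}(u,X) = f^\nu_{y^\nu}(AX-b,X) = (c^\nu - A^Ty^\nu)^T X + \langle y^\nu, b\rangle .
\]

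\emph{Step 2: handle $X$ outside $\mathcal{X}$.} The subtle point is that the effective domain in $X$ is $\{X\ge0\}$, not $\mathcal{X}$: if $u$ is large enough, any $X \ge 0$ is feasible. We therefore apply Step~1 to every $X\ge 0$, obtaining the reduced function
\[
X \mapsto (c^\nu - A^T y^\nu)^T X + \langle y^\nu,b\rangle \quad \text{on } \{X \ge 0\}.
\]
The hypothesis $A^Ty^\nu \le c^\nu$ makes the linear coefficient nonnegative, so this function is minimized at $X=0$, with value $\langle y^\nu,b\rangle$. This is exactly $\psi_2^\nu(y^\nu)$, consistent with Remark~\ref{rem:psi-nu-R2}, and the minimizer is $(u,X) = (-b, 0)$.

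\emph{Step 3: choose the compact set.} Take $B := \{-b\}\times\{0\}$, or more generously $B := [-\|b\|_\infty,\|b\|_\infty]^{N_T}\times\mathcal{X}$, which contains $(-b,0)$ because $0\in\mathcal{X}$ as $G_0>0$. Then for all $\nu\ge\nu_0$,
\[
\inf_B f^\nu_{y^\nu} = \inf f^\nu_{y^\nu} \le \inf f^\nu_{y^\nu} + \varepsilon .
\]
The set $B$ is independent of $\nu$ and of the particular sequence $\{y^\nu\}$.

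The main obstacle is Step~2. We must check that the reduction to $X \in \{X\ge0\}$ is correct, since $\mathcal{X}(u)$ is unbounded as $u$ grows. We must also confirm that the condition $A^Ty^\nu\le c^\nu$ is precisely what prevents the infimum from escaping to infinity along rays in $X$. Once the explicit minimizer $(-b,0)$ is identified, tightness is immediate.
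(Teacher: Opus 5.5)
Your argument is correct and essentially the paper's. Both identify $(u,X)=(-b,0)$ as a global minimizer of $f^\nu_{y^\nu}$ with value $(y^\nu)^T b$ and take the singleton $B=\{(-b,0)\}$; you derive the infimum by explicit partial minimization (over $u$, then over $X\ge 0$), whereas the paper reads it off the dual formula in Remark~\ref{rem:psi-nu-R2}.

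Drop your ``more generous'' alternative, though. $0\notin\mathcal{X}$ because $G_t<0$ for $t\ge 1$, so $[-\|b\|_\infty,\|b\|_\infty]^{N_T}\times\mathcal{X}$ does not contain $(-b,0)$. Moreover, for dual-feasible $y^\nu$ with $c^\nu-A^Ty^\nu>0$ componentwise, the infimum over that set stays strictly above $\inf f^\nu_{y^\nu}$, so it fails the tightness inequality for small $\varepsilon$.
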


\begin{proof}
At $(u, X) = (-b, 0)$, the constraints of~\eqref{AR2} read $X = 0 \geq 0$ and $AX - b = -b \leq u = -b$, so both are satisfied and $f\nup_2(-b, 0) = 0$. Hence
\[
f\nup_{y\nup}(-b, 0) = -\langle y\nup, -b\rangle = (y\nup)^T b.
\]
By Remark~\ref{rem:psi-nu-R2} and dual feasibility of $y\nup$,
\[
\inf_{(u, X) \in \R^{N_T} \times \R^n} f\nup_{y\nup}(u, X) = \psi_2\nup(y\nup) = (y\nup)^T b.
\]
Setting $B := \{(-b, 0)\}$ (compact, a singleton, independent of $\varepsilon$), we obtain for every $\varepsilon > 0$ and every $\nu \geq \nu_0$,
\[
\inf_{(u, X) \in B} f\nup_{y\nup}(u, X) = \inf_{(u, X) \in \R^{N_T} \times \R^n} f\nup_{y\nup}(u, X) \leq \inf_{(u, X) \in \R^{N_T} \times \R^n} f\nup_{y\nup}(u, X) + \varepsilon.
\]
\end{proof}

\subsubsection{Hypo-convergence of dual functions}

\begin{lemma}[Existence of an interior dual direction]\label{lem:interior-d}
There exists $d \in \R^{N_T}$ with $d \leq 0$ and $A^T d < 0$ componentwise.
\end{lemma}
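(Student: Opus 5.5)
The plan is to build $d$ explicitly by backward recursion on the event tree. The key fact is that $A$ has a bidiagonal block structure: each node's column block is coupled only to its own row and to its children's rows. Together with strict positivity of all asset prices, this lets us make the components of $d$ more and more negative as we move toward the root.

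\emph{Computing $A^T d$ blockwise.} Index $d$ by the nodes $\bigcup_{t=0}^T I_t$, with a scalar $d_k$ attached to the constraint row of node $k$. From the definition of $A$, the column block of node $j\in I_t$ has nonzero entries in only two places:
\begin{itemize}
\item its own row, with entry $S_t^T(j)$;
\item the rows of its children $i\in I_{t+1}$ with $i^-=j$ (present only when $t<T$), with entry $-S_{t+1}^T(i)$.
\end{itemize}
Hence the $(k+1)$-dimensional block of $A^T d$ at node $j$ is
\[
(A^T d)_j = d_j\, S_t(j) - \sum_{i\in I_{t+1}:\, i^-=j} d_i\, S_{t+1}(i),
\]
where the sum is empty when $j\in I_T$, since no constraint rows exist at time $T+1$. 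I will write out this formula carefully from the listed cases $A_{k,j}$, including the root case $S_0^T(1)$.

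\emph{Backward construction.} For $j\in I_T$, set $d_j:=-1$. Then $(A^T d)_j=-S_T(j)<0$ componentwise, because all prices, including the risk-free price, are strictly positive.

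Now suppose $d_i<0$ has been fixed for every node $i\in I_{t+1}$. For $j\in I_t$, the condition $(A^T d)_j<0$ reads, for each asset $a\in\{0,\dots,k\}$,
\[
d_j < \frac{1}{S^a_t(j)}\sum_{i:\, i^-=j} d_i\, S^a_{t+1}(i).
\]
The right-hand side is a finite real number, because $S_t^a(j)>0$. So I take
\[
d_j := \min_{a} \frac{1}{S^a_t(j)}\sum_{i:\,i^-=j} d_i\, S^a_{t+1}(i) \;-\; 1 .
\]
This choice satisfies all $k+1$ inequalities. It is also negative, since each sum is a sum of negative terms, so it is $\le 0$, and then we subtract $1$.

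Proceeding down to the root $I_0$ gives $d\le 0$ (indeed $d<0$) with $A^T d<0$ componentwise.

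\emph{Main obstacle.} The only delicate step is the bookkeeping: confirming that the transpose of the paper's $A$ really has the parent/child form above. In particular, I need to check two things:
\begin{itemize}
\item the terminal-level blocks $j\in I_T$ receive no contribution from children, so their columns involve only $S_T(j)$;
\item the root block uses $S_0$ consistently.
\end{itemize}
Once this structure is verified, the positivity of prices makes each recursive step immediate, and finiteness of the tree ensures the recursion terminates.
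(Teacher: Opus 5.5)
Your proof is correct and follows the same backward recursion as the paper: set $d_i=-1$ on $I_T$, use the parent/child form $(A^Td)_j = S_t(j)\,d_j-\sum_{i:\,i^-=j}S_{t+1}(i)\,d_i$, and make each earlier $d_j$ sufficiently negative using strict positivity of prices. Your explicit choice $d_j=\min_a \frac{1}{S_t^a(j)}\sum_{i:\,i^-=j} d_i\,S^a_{t+1}(i)-1$ just makes the paper's ``sufficiently negative'' step concrete. Your sign bookkeeping is also right: the child term $-\sum_i S_{t+1}(i)\,d_i$ is positive, which is exactly why $d_j$ has to be pushed down. The paper's text slips here, calling that term negative.
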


\begin{proof}
We construct $d$ recursively, backward in $t$. For each $i \in I_T$, set $d_i := -1$, so the block of $A^T d$ at $i$ reads $(A^T d)_i = S_T(i) d_i = -S_T(i) < 0$ (using $S_T(i) > 0$ componentwise). Inductively, suppose $d_j < 0$ has been chosen for every $j \in I_{t+1}$. For each $i \in I_t$,
\[
(A^T d)_i = S_t(i) d_i - \sum_{j \in i^+} S_{t+1}(j) d_j,
\]
and the second term is strictly negative (componentwise) by the induction hypothesis. Choosing $d_i$ sufficiently negative ensures $(A^T d)_i < 0$ componentwise. The resulting vector satisfies $d \leq 0$ and $A^T d < 0$.
\end{proof}

\begin{theorem}[Hypo-convergence of Dual Functions]\label{thm:hypo2}
If $p\nup \to p$, then the dual function $\psi_2\nup$ of~\eqref{AR2} hypo-converges to the dual function $\psi_2$ of~\eqref{R2}.
\end{theorem}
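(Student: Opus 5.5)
The plan is to prove hypo-convergence directly from the explicit formulas $\psi_2(y) = y^T b - \iota_{\{A^T y \le c,\ y\le 0\}}(y)$ and $\psi_2^\nu(y) = y^T b - \iota_{\{A^T y \le c^\nu,\ y\le 0\}}(y)$ (Remark~\ref{rem:psi-nu-R2}). Since $y\mapsto y^Tb$ is continuous and finite, hypo-convergence of $\psi_2^\nu$ to $\psi_2$ reduces to Painlevé–Kuratowski set convergence of the polyhedral feasible sets $D^\nu := \{y\le 0 : A^Ty\le c^\nu\}$ to $D := \{y\le 0 : A^Ty\le c\}$. Equivalently, we need the two conditions of epi-convergence for $-\psi_2^\nu$, using $c^\nu\to c$, which is immediate from $p^\nu\to p$ and the blockwise formula for $c^\nu$.

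\textbf{Liminf condition for $-\psi_2^\nu$ (outer limit).} Take $y^\nu\to y$. If $y^\nu\in D^\nu$ along a subsequence, passing to the limit in $y^\nu\le 0$ and $A^Ty^\nu\le c^\nu$ gives $y\in D$, so $\limsup\psi_2^\nu(y^\nu)\le y^Tb=\psi_2(y)$. Otherwise $\psi_2^\nu(y^\nu)=-\infty$ eventually and the inequality is trivial. This step is routine closedness.

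\textbf{Limsup condition for $-\psi_2^\nu$ (inner limit).} For $y\in D$ we need $y^\nu\in D^\nu$ with $y^\nu\to y$. This is the main obstacle: if $c^\nu_i<c_i$ in some component where $(A^Ty)_i=c_i$, then $y$ itself is infeasible for $\nu$. Here Lemma~\ref{lem:interior-d} enters. Take $d\le 0$ with $A^Td<0$ componentwise, set $\delta := \min_i |(A^Td)_i|>0$ and $\varepsilon^\nu := \|c^\nu-c\|_\infty/\delta \to 0$, and define $y^\nu := y+\varepsilon^\nu d$. Then $y^\nu\le 0$, and
\[
A^Ty^\nu \le c + \varepsilon^\nu A^Td \le c - \|c^\nu - c\|_\infty \mathbf{1} \le c^\nu,
\]
so $y^\nu\in D^\nu$ and $y^\nu\to y$. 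Moreover $\psi_2^\nu(y^\nu)=(y^\nu)^Tb\to y^Tb=\psi_2(y)$. For $y\notin D$ we have $\psi_2(y)=-\infty$ and any sequence works.

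Combining both conditions gives $-\psi_2^\nu \xrightarrow{e} -\psi_2$, i.e.\ $\psi_2^\nu\xrightarrow{h}\psi_2$. An alternative route is to invoke \cite[Theorem~5.1(a)]{deride2024approximations} with Proposition~\ref{prop:epi2} and Lemma~\ref{lem:tightness of AR2}. However, that lemma requires dual-feasible $y^\nu$, and producing such sequences is exactly the construction above. The direct argument is therefore cleaner, and I would present it, mentioning the other route only as a remark.
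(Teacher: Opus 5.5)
Your proof is correct. Its key step is the same as the paper's. The paper also fixes $y\in\operatorname{dom}(-\psi_2)$ and sets $y^\nu=y+\varepsilon_\nu d$, with $d$ from Lemma~\ref{lem:interior-d}. It uses the componentwise choice $\varepsilon_\nu=\max_l\max\{0,(c^\nu_l-c_l)/(A^Td)_l\}$ instead of your cruder but equally valid $\|c^\nu-c\|_\infty/\delta$.

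The two routes differ in how this construction becomes hypo-convergence. The paper does not check the two conditions on $\psi_2^\nu$ directly. It applies \cite[Theorem~5.1(a)]{deride2024approximations} to two ingredients. The first is the epi-convergence $f_2^\nu\xrightarrow{e}f_2$ of Proposition~\ref{prop:epi2}, which bounds $\limsup_\nu\psi_2^\nu(y^\nu)$ from above along every $y^\nu\to y$. The second is tightness of $\{f^\nu_{y^\nu}\}$ along the constructed sequence (Lemma~\ref{lem:tightness of AR2}), which gives convergence of the infima there.

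You instead work with the closed forms and reduce everything to set convergence $D^\nu\to D$ of the dual feasible polyhedra. Closedness of $\{(y,c):y\le 0,\ A^Ty\le c\}$ gives the outer limit, and the shift along $d$ gives the inner limit.

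Your version is self-contained and avoids the external theorem. It also gives a little more: an explicit rate $\|y^\nu-y\|\le\|d\|\,\|c^\nu-c\|_\infty/\delta$. Nothing is lost by relying on the formula, since the paper's tightness lemma itself uses Remark~\ref{rem:psi-nu-R2} to identify $\inf f^\nu_{y^\nu}=(y^\nu)^Tb$.

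The paper's route buys uniformity. It is the same epi-convergence-plus-tightness template used in Theorems~\ref{thm:hypo1} and~\ref{thm:hypo3}. That template also covers Case~\ref{case:1.2}, where no explicit dual function is available and your direct argument would not apply.
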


\begin{proof}
Epi-convergence $f_2\nup \xrightarrow{e} f_2$ holds by Proposition~\ref{prop:epi2}. By Theorem~5.1(a) in~\cite{deride2024approximations}, it suffices to exhibit, for every $y \in \mathrm{dom}(-\psi_2)$, a sequence $y\nup \to y$ for which $\{f\nup_{y\nup}\}$ is tight.

Fix $y \in \mathrm{dom}(-\psi_2)$, so $y \leq 0$ and $A^T y \leq c$. Let $d$ be the vector from Lemma~\ref{lem:interior-d}, and set

\[
\varepsilon_\nu := \max_l \max\!\left\{0,\ \frac{c_l\nup - c_l}{(A^T d)_l}\right\}, \qquad y\nup := y + \varepsilon_\nu d.
\]
Both numerator and denominator in the ratio are negative when the term is non-zero, so $\varepsilon_\nu \geq 0$. By construction $A^T y\nup \leq c\nup$, and since $y \leq 0$, $d \leq 0$, $\varepsilon_\nu \geq 0$, also $y\nup \leq 0$. As $c\nup \to c$ and $A^T y \leq c$, the ratio inside the max tends to $\max\{0, c_l\nup-c_l)/(A^T d)_l\} = 0$, so $\varepsilon_\nu \to 0$ and $y\nup \to y$. Tightness of $\{f\nup_{y\nup}\}$ follows from Lemma~\ref{lem:tightness of AR2}.
\end{proof}

\subsubsection{Strong duality}

\begin{theorem}[Strong Duality for \eqref{R2}]\label{thm:SD2}
The dual value of~\eqref{R2} coincides with the primal value: $\sup_y \psi_2(y) = \min_X \varphi(X)$.
\end{theorem}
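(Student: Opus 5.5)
The plan is to use linear programming duality for the reformulation of Section~\ref{sec:4.2.1}. There, \eqref{P} is the LP $\min\{c^T X : AX \le b,\ X\ge 0\}$ with feasible set $\mathcal{X}$, and the computation there gives $\psi_2(y) = y^T b$ on $\{y\le 0,\ A^T y \le c\}$ and $-\infty$ otherwise. So $\sup_y \psi_2(y)$ is exactly the optimal value of the standard LP dual
\[
\max_{y} \ b^T y \quad \text{s.t.} \quad A^T y \le c,\ y \le 0.
\]
It therefore suffices to check the hypotheses of the LP strong duality theorem: the primal is feasible with finite optimal value, and then the dual is feasible with equal value, and both optima are attained.

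\textbf{Step 1: primal feasibility and finiteness.} I first check that $\mathcal{X}\ne\emptyset$. Taking $X=0$ gives $\langle S_0,X_0\rangle = 0 \le G_0$, which holds since $G_0>0$. The later constraints then read $0\le G_t$, and these fail because $G_t<0$. So feasibility is not automatic, and I will rely on the standing assumption implicit throughout the paper that $V=\min_{X\in\mathcal{X}}\varphi(X)$ exists, which presupposes $\mathcal{X}\neq\emptyset$. Once $\mathcal{X}$ is nonempty, Remark~\ref{rem:compactness-X} makes it compact and $c^TX$ is continuous, so the primal value $V$ is finite and attained.

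\textbf{Step 2: weak duality.} This follows from the Rockafellian framework: $\psi_2(y)\le f_2(0,X)-0=\varphi(X)$ for every $X$, hence $\sup\psi_2 \le V$. A direct check also works: for $y\le 0$ with $A^Ty\le c$ and $X\in\mathcal{X}$, we have $b^Ty \le (AX)^Ty = X^T A^T y \le c^TX$, using $y\le 0$, $AX\le b$ and $X\ge 0$.

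\textbf{Step 3: reverse inequality.} I invoke the LP duality theorem (equivalently, Farkas' lemma): since the primal LP is feasible and bounded, its dual is feasible and $\max b^Ty = \min c^TX = V$. As an alternative within the paper's framework, one can show that $\phi(u)=\inf_X f_2(u,X)$ is finite near $0$ and then use $\phi^{**}(0)=\sup\psi_2$. To get finiteness, note that Lemma~\ref{lem:interior-d} supplies a dual direction $d$ with $d\le 0$ and $A^Td<0$. Combined with compactness of $\mathcal{X}(u)$ for $u$ near $0$, with every coordinate bounded as in Remark~\ref{rem:compactness-X}, this shows $\phi$ is polyhedral convex and proper. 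A proper polyhedral function is lsc on its domain, so $\phi^{**}(0)=\phi(0)=V$.

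The main obstacle is the primal-feasibility issue in Step 1, given that $G_t<0$. I will state it explicitly as the implicit assumption under which $V$ is defined. Beyond that, the only delicate point is ensuring no duality gap in the polyhedral setting, which LP duality handles without any Slater condition. That is why the claim perturbation, unlike Case~\ref{case:1.1}, yields strong duality unconditionally.
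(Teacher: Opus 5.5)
Your proof is correct, but it takes a different route from the paper. The paper does not invoke LP duality. Following Theorem~3 of \cite{rockafellar1976stochastic}, it shows that the perturbation function $\phi(u)=\inf_X f_2(u,X)$ is proper, convex and lower semicontinuous. Lower semicontinuity comes from boundedness of $\bigcup_{u\in K}\mathcal{X}(u)$ over compact $K$ together with \cite[Theorem~1.17]{rockafellar1998variational}. The paper then uses $\phi^{**}=\phi$ to get $\sup_y\psi_2(y)=\phi^{**}(0)=\phi(0)$.

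You instead identify $\sup_y\psi_2$ as the value of the standard LP dual $\max\{b^Ty : A^Ty\le c,\ y\le 0\}$ and apply the LP duality theorem to the feasible, bounded primal. Your treatment of feasibility is right: $X=0$ is infeasible because $G_t<0$, so $\mathcal{X}\neq\emptyset$ must be assumed, exactly as the paper does with ``feasibility of~\eqref{P}''.

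The two routes buy different things. The paper's argument stays inside the Rockafellian/biconjugate framework, and its level-boundedness mechanism for lsc would survive a non-polyhedral convex objective. Your argument exploits polyhedrality, needs no constraint qualification, and also gives attainment of the dual supremum. That extra matters here. Proposition~\ref{prop:dual tight} asserts $\argmax\psi_2\neq\emptyset$ ``by the Strong Duality Theorem'', and Section~\ref{sec:4.2.5} works with an optimal $y^*$. Yet the dual maximizers are $\partial\phi(0)$, and for a general closed proper convex $\phi$ the identity $\phi^{**}(0)=\phi(0)$ does not make this set nonempty.

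One correction concerns the alternative you sketch at the end of Step~3: $\phi$ need not be finite near $0$. Suppose $G_0$ is exactly the minimal initial capital for which $\mathcal{X}\neq\emptyset$. Then $\mathcal{X}(u)=\emptyset$ for every $u=(u_0,0,\dots,0)$ with $u_0<0$, so $0$ lies on the boundary of $\operatorname{dom}\phi$. The direction $d$ of Lemma~\ref{lem:interior-d} does not give finiteness near $0$; it gives that $td$ is dual feasible for large $t>0$. Weak duality then yields $\phi(u)\ge (td)^T(b+u)>-\infty$ for every $u$. Together with $\phi(0)<\infty$ and polyhedrality, this makes $\phi$ proper and closed, so $\phi^{**}(0)=\phi(0)$ holds without any finiteness near $0$.
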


\begin{proof}
We follow Theorem~3 in~\cite{rockafellar1976stochastic}, which yields strong duality once we show that the optimal value function $\phi(u) := \inf_{X \in \R^n} f_2(u, X)$ is convex, proper, and lower semicontinuous on $\R^{N_T}$.

\emph{Properness.} The Rockafellian $f_2(u, X) = c^T X + \iota_{\{X \geq 0,\ AX \leq b + u\}}(X)$ never takes the value $-\infty$, so neither does $\phi$. Moreover, $\phi(0) = \min_{X \in \mathcal{X}} \varphi(X)$ is finite by feasibility of~\eqref{P}.

\emph{Convexity.} $f_2$ is jointly convex in $(u, X)$ (linear part plus indicator of a convex set), and $\phi$ is the inf-projection of a jointly convex function onto $u$, hence convex.

\emph{Lower semicontinuity.} For each $u \in \R^{N_T}$ with $\mathcal{X}(u) := \{X \geq 0 : AX \leq b + u\} \neq \emptyset$, the same recursive argument as in Remark~\ref{rem:compactness-X} (with $b$ replaced by $b + u$) shows that $\mathcal{X}(u)$ is compact; otherwise $\phi(u) = +\infty$. Moreover, on any compact set $K \subset \R^{N_T}$, the union $\bigcup_{u \in K} \mathcal{X}(u)$ is bounded uniformly. Consequently, the inf-projection $\phi$ is lower semicontinuous (\cite[Theorem~1.17]{rockafellar1998variational}).

Since $\phi$ is convex, proper, and lsc, $\phi^{**} = \phi$, and in particular $\sup_y \psi_2(y) = \phi^{**}(0) = \phi(0) = \min_X \varphi(X)$.
\end{proof}

\begin{proposition}\label{prop:dual tight}
    Suppose that $p\nup \rightarrow p$. Then the approximating dual function $-\psi_2^\nu(y) = -y^T b +\iota_{\{A^T y \leq c^\nu\}}(y)+ \iota_{\{y \leq 0\}}(y)$ is tight
\end{proposition}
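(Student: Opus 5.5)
The plan is to show that, for every $\nu$ large enough, $\sup_y \psi_2^\nu(y)$ is attained at a point $y^\nu$ lying in a fixed ball $B_R\subset\R^{N_T}$. Taking $B_\varepsilon := B_R$ for every $\varepsilon>0$ then gives $\inf_{B_R}(-\psi_2^\nu) = \inf_{\R^{N_T}}(-\psi_2^\nu) \le \inf(-\psi_2^\nu)+\varepsilon$, which is exactly tightness. The approach is pure linear programming. By Remark~\ref{rem:psi-nu-R2}, maximizing $\psi_2^\nu$ is the LP
\[
\max\ b^T y \quad\text{s.t.}\quad y \le 0,\ A^T y \le c^\nu .
\]

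\emph{Step 1: the dual LP is solvable for each $\nu$.} The primal LP $\min\{(c^\nu)^T X : X\ge0,\ AX\le b\}$ has feasible set $\mathcal X$, which does not depend on $\nu$ because only the objective changes. This set is nonempty by the standing feasibility of \eqref{P} and compact by Remark~\ref{rem:compactness-X}. Hence the primal optimal value is finite for every $\nu$. By LP duality (equivalently, Theorem~\ref{thm:SD2} applied verbatim with $c$ replaced by $c^\nu$), the dual value is finite and is attained.

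\emph{Step 2: the maximum is attained at a vertex.} The feasible polyhedron $P^\nu := \{y : y\le 0,\ A^T y\le c^\nu\}$ contains the constraints $-y\le 0$, whose coefficient matrix $-I$ has full column rank. So $P^\nu$ contains no lines, i.e.\ it is pointed. By the fundamental theorem of LP, a finite maximum of a linear function over a pointed polyhedron is attained at a vertex. Every vertex is the unique solution of a square subsystem $M y = r$. Here $M$ is a nonsingular $N_T\times N_T$ matrix made of rows of $-I$ and $A^T$, and $r$ is made of the corresponding entries of $0$ and $c^\nu$.

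\emph{Step 3: uniform bound.} There are only finitely many choices of the row set defining $M$, and none of them depends on $\nu$. Hence every vertex of every $P^\nu$ has the form $y = M^{-1} r(c^\nu)$ for one of finitely many fixed nonsingular matrices $M$. This gives $\|y\|\le \max_M \|M^{-1}\|\,\|c^\nu\|$. Since $p^\nu\to p$, we have $c^\nu\to c$, so $\|c^\nu\|\le \|c\|+1$ for all $\nu\ge\nu_0$. This yields the radius $R$ uniformly in $\nu$ and completes the argument.

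The main obstacle I expect is Step 1, and more precisely resisting a detour through the recession cone of $P^\nu$. By Lemma~\ref{lem:interior-d} that cone is nontrivial, so the set of maximizers may well be unbounded and no compact-level-set argument is available. The vertex/Cramer argument sidesteps this: tightness only requires \emph{some} maximizer in a fixed compact set, not that all of them lie there. The only genuine inputs are primal feasibility and boundedness uniformly in $\nu$, and both come for free because $\mathcal X$ is independent of $\nu$.
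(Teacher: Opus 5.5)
Your proof is correct, but it takes a different route from the paper's.

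\textbf{The paper's route.} It fixes a maximizer $\bar y$ of the limiting dual LP and invokes the Lipschitz stability theorem of Mangasarian and Shiau for LP solution sets under perturbations of the cost vector. This gives $y^\nu \in \argmax \psi_2^\nu$ with $\|y^\nu-\bar y\|_\infty \le L\|c^\nu-c\|$. Hence for large $\nu$ some maximizer lies in the compact ball $B[\bar y,1]$.

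\textbf{Your route.} You bound every vertex of the pointed polyhedra $P^\nu$ uniformly in $\nu$. This uses the finitely many nonsingular basis matrices drawn from the $\nu$-independent matrix $\begin{pmatrix} I \\ A^T\end{pmatrix}$. The argument is elementary and self-contained; it is the kind of finite-basis reasoning on which such Lipschitz constants rest anyway.

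\textbf{What your route buys.} It uses only boundedness of $\{c^\nu\}$, which holds for any sequence $p^\nu\in\Delta_{m-1}$. So tightness does not actually require $p^\nu\to p$; convergence is needed only afterwards, together with Theorem~\ref{thm:hypo2}, to identify cluster points of approximate maximizers as maximizers of $\psi_2$. Your Step 1 also explicitly establishes that each approximating dual LP is solvable. The paper's proof relies on this tacitly when it picks $y^\nu\in S(c^\nu)$, but it only justifies $S(c)\neq\emptyset$.

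\textbf{What the paper's route buys.} It gives localization with a rate: every maximizer of the limit problem is approached by maximizers of the approximating problems at rate $O(\|c^\nu-c\|)$. That is strictly more than tightness.

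\textbf{Cosmetic slips.}
\begin{itemize}
\item The constraint $y\le 0$ corresponds to rows of $I$. You wrote ``$-y\le 0$'', which means $y\ge 0$; pointedness is unaffected.
\item You should take the closed ball $\bar B_R$ so that the set in the tightness definition is compact.
\end{itemize}
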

\begin{proof}
Let $S(c)=\arg\max \psi_2(y)$, where we regard
$S:\mathbb{R}^{N_T}\rightrightarrows \mathbb{R}^{N_T}$ as a set-valued mapping and note that $S(c)$ is nonempty by the Strong Duality Theorem~\ref{thm:SD2}. Let $\bar y \in S(c)$ a solution, then by Theorem 2.4 \cite{mangasarian1987lipschitz}, there exist a solution $y\nup \in S(c\nup)$ such that $\|\bar y - y\nup\|_{\infty} \leq L \|c-c\nup\|.$ Note that, since $p^\nu \rightarrow p$ it follows that $c^\nu \rightarrow c$. Finally, taking $B_\varepsilon = B[\bar y,1]$, there exist $\nu_0 \in \N$ such that $S(c^\nu)\cap B_\varepsilon \neq\emptyset$ for all $\nu \geq \nu_0$. Hence, for every $\varepsilon>0$, $\inf_{y\in B_\varepsilon} -\psi_2^\nu(y) \leq \inf_{y\in \R^m} -\psi_2(y)\nup +\varepsilon,\quad \forall \nu \geq \nu_0$
\end{proof}

\begin{remark}\label{rem:convergencia de soluciones}
    Let $\{y\nup, \nu \in \N\}$ be the solutions of the approximating dual problems, then, by Proposition~\ref{prop:dual tight} and Theorem~\ref{thm:hypo2}, every cluster point of $\{y\nup, \nu \in \N\}$ are maximizers of $\psi_2$ and $\sup \psi\nup_2 \rightarrow \sup \psi_2$.
\end{remark}

\subsubsection{Shadow prices}
\label{sec:4.2.5}

The dual variables $y^* = (y_0^*, \{y_t(i)^*\}_{i \in I_t,\ t \geq 1})$ are Lagrange multipliers associated with the cash-flow constraints. Since the primal is in minimization form and the constraints are of $\le$ type, $y^* \leq 0$.

For each non-terminal node $i \in I_t$, the quantity $\lambda_t(i) := -y_t(i)^* \geq 0$ is the \emph{shadow price of cash at node $i$}: the marginal increase in $-\min\varphi = \max \mathbb{E}\langle S_{T+1}, X_T\rangle$ produced by relaxing the cash-flow constraint at $i$ by one unit. Strong duality (Theorem~\ref{thm:SD2}) then gives the decomposition
\[
\max_{X \in \mathcal{X}} \mathbb{E}\langle S_{T+1}, X_T\rangle = -\min_X \varphi(X) = -\sup_y \psi_2(y) = \sum_{t=0}^T \sum_{i \in I_t} \lambda_t(i)\, G_t(i),
\]
expressing the optimal portfolio value as an inner product of node-level shadow prices and the contingent claim's cash flows.

Dual feasibility $A^T y^* \leq c$ admits a clean economic reading in terms of the deflators $\lambda$. Translating the block-row constraints gives
\begin{align}
S_t(i)\, \lambda_t(i) &\geq \sum_{j \in i^+} S_{t+1}(j)\, \lambda_{t+1}(j), && i \in I_t,\ t \in \{0, \dots, T-1\}, \label{eq:sm-interior} \\
S_T(i)\, \lambda_T(i) &\geq \sum_{j \in i^+} p_j\, S_{T+1}(j), && i \in I_T. \label{eq:sm-terminal}
\end{align}
The interior condition~\eqref{eq:sm-interior} is a \emph{supermartingale property} (up to normalization) of the deflated price process: the deflated price at a parent node dominates the sum of deflated prices at its children. The terminal condition~\eqref{eq:sm-terminal} closes the recursion against the probability-weighted terminal asset prices. The asymmetry between~\eqref{eq:sm-interior} and~\eqref{eq:sm-terminal} — the absence of probabilities at non-terminal levels — is a direct consequence of the inequality (rather than equality) form of the budget constraints, which allows the investor to leave cash unused on the table.

This supermartingale structure is the discrete-time skeleton of the dual representation in $L^p$ developed in Section~\ref{sec:5}, where it becomes a property of measures absolutely continuous with respect to $\mathbb{P}$.

\subsection{Joint perturbation}

In the case of the Rockafellian defined in \eqref{R3}, the same observation as in Remark~\ref{rem:positivity} holds, arising from the combination of the two previously defined Rockafellians.

\subsubsection{Dual function}
In this section, we show that the dual function can be computed directly, without explicitly using the Lagrangian formulation.

Recall $A\in \R^{(N_T) \times n}$ and $b\in \R^{N_T}$ from Subsection~\ref{sec:4.2.1}. We define the linear cost vector $c(u)\in \R^n$ by
\begin{equation*}
    c_i(u) = \begin{cases}
        0, & \forall i \in I_t, ~\forall t \in\{1,...,T-1\}, \\
        -\sum_{j \in i^+} (p_j+u_j)S_{T+1}(j), &\forall i \in I_T,
    \end{cases}
\end{equation*}
so that $c(u)^TX = -\sum_{j=1}^m (p_j+u_j) \langle S_{T+1}(j), X_T(j^-)\rangle.$ Then we compute the dual function of \eqref{R3} in the following two cases.

\textbf{Case 1: $\theta=0$}
\begin{align}
    \psi_3(y,z) &= \inf_{u,v,X} f_3(u,v,X)- \langle y,u \rangle - \langle z,v\rangle \notag\\
    &= \inf_{\begin{array}{c}
         X \geq 0  \\
         -p \leq u \leq 0 \\
         AX \leq b+v
    \end{array}} c(u)^TX -\langle y,u \rangle-\langle z,v\rangle \notag\\
    &= \inf_{X\geq0,-p \leq u \leq 0 } \left (c(u)^TX -\langle y,u \rangle - \sup_{v \geq AX-b} \langle z,v\rangle \right ) \notag\\
    &= \begin{cases}
        \inf_{X\geq 0,-p \leq u \leq 0 } c(u)^TX -z^TAX +z^Tb - \langle y,u \rangle, \text{ if } z \leq 0  \\
        -\infty \  \text{ in other case}
    \end{cases} \notag\\
    &=  \begin{cases}
        \inf_{X\geq 0,-p \leq u \leq 0 } (c(u)-A^Tz)^TX +z^Tb - \langle y,u \rangle, \text{ if } z \leq 0  \label{*} \\
        -\infty \  \text{ in other case}
    \end{cases} \\
     &=  \begin{cases}
        \inf_{-p \leq u \leq 0 } z^Tb - \langle y,u \rangle, \text{ if } z \leq 0 \text{ and } A^Tz \leq c(u) \ \forall u \text{ such that } -p \leq u \leq 0  \\
        -\infty \  \text{ in other case}
    \end{cases}\notag
\end{align}

Note that $c(u)$ is decreasing in the sense that $c(0) \leq c(u)$ for all $u$ such that $ -p \leq u \leq 0,$ where the inequality is understood componentwise. That means the condition $A^Tz \leq c(u) \ \text{for all } u \text{ such that } -p \leq u \leq 0$ is equivalent to $A^Tz \leq c(0)$. With this, we obtain

\begin{align*}
         &=  \begin{cases}
        \inf_{-p \leq u \leq 0 } z^Tb - \langle y,u \rangle, \text{ if } z \leq 0 \text{ and } A^Tz \leq c(u) \ \forall u \text{ such that } -p \leq u \leq 0  \\
        -\infty \  \text{ in other case}
    \end{cases} \\
    &=  \begin{cases}
        \inf_{-p \leq u \leq 0 } z^Tb - \langle y,u \rangle, \text{ if } z \leq 0 \text{ and } A^Tz \leq c(0)\\
        -\infty \  \text{ in other case}
    \end{cases} \\
    &= \begin{cases}
        z^Tb+ \sum_{j \in J} p_jy_j, \text{ if } z \leq 0 \text{ and } A^Tz \leq c(0)\\
        -\infty \  \text{ in other case}
        \end{cases}
\end{align*}
Where $J=\{j : y_j \leq 0\}$.
In this case
\begin{equation}
\label{argmax psi3.1}
    \argmax \psi_3(y,z)= \Pi_{j=1}^m Y_j \times \argmax \{  z^Tb, \text{ s.t. } A^Tz \leq c(0), z \leq 0\},
\end{equation}
with $Y_j$ is equals to $\R^m_+$ if $p_j \neq 0$, and equals to $\R$ if $p_j=0$.  Note that $z$ is a solution to the dual problem of right-side Rockafellian.

\textbf{Case 2: $\theta >0$}

\begin{align}
    \psi_3(y,z) &= \inf_{u,v,X} f_3(u,v,X)- \langle y,u \rangle - \langle z,v\rangle \notag\\
    &= \inf_{\begin{array}{c}
         X \geq 0  \\
         -p \leq u \leq 0 \\
         AX \leq b+v
    \end{array}} c(u)^TX -\langle y,u \rangle-\langle z,v\rangle +\frac{1}{2}\theta\|u\|^2 \notag\\
    &= \inf_{X\geq0,-p \leq u \leq 0 } \left (c(u)^TX -\langle y,u \rangle + \frac{1}{2}\theta\|u\|^2 - \sup_{v \geq AX-b} \langle z,v\rangle \right ) \notag\\
    &= \begin{cases}
        \inf_{X\geq 0,-p \leq u \leq 0 } c(u)^TX -z^TAX +z^Tb - \langle y,u \rangle+\frac{1}{2}\theta\|u\|^2, \text{ if } z \leq 0  \\
        -\infty \  \text{ in other case}
    \end{cases} \notag\\
    &=  \begin{cases}
        \inf_{X\geq 0,-p \leq u \leq 0 } (c(u)-A^Tz)^TX +z^Tb - \langle y,u \rangle+\frac{1}{2}\theta\|u\|^2, \text{ if } z \leq 0  \label{**}  \\
        -\infty \  \text{ in other case}
    \end{cases} \\
     &=  \begin{cases}
        \inf_{-p \leq u \leq 0 } z^Tb - \langle y,u \rangle+\frac{1}{2}\theta\|u\|^2, \text{ if } z \leq 0 \text{ and } A^Tz \leq c(0) \\
        -\infty \  \text{ in other case}
    \end{cases} \notag\\
    &=\begin{cases}
        z^Tb+\inf_{-p \leq u \leq 0 }\frac{1}{2}\theta\|u\|^2  - \langle y,u \rangle, \text{ if } z \leq 0 \text{ and } A^Tz \leq c(0) \\
        -\infty \  \text{ in other case}
    \end{cases} \notag \\
     &=\begin{cases}
        z^Tb+ \sum_{j=1}^m\inf_{-p_j \leq u_j \leq 0 }\frac{1}{2}\theta u_j^2  - y_ju_j, \text{ if } z \leq 0 \text{ and } A^Tz \leq c(0) \\
        -\infty \  \text{ in other case}
    \end{cases} \notag \\
    &=\begin{cases}
        z^Tb+ \sum_{j=1}^m h_j(y), \text{ if } z \leq 0 \text{ and } A^Tz \leq c(0) \\
        -\infty \  \text{ in other case}
    \end{cases} \notag
\end{align}

Where

\begin{equation*}
    h_j(y)= \begin{cases}
        0, \text{ if } \frac{y_j}{\theta} >0 \\
        -\frac{1}{2\theta} y_j^2 , \text{ if } \frac{y_j}{\theta} \in[-p_j,0] \\

        \frac{1}{2}\theta p_j^2 + y_jp_j, \text{ if } \frac{y_j}{ \theta} < -p_j
    \end{cases}
\end{equation*}

Finally,

\begin{equation}
\label{argmax psi3.2}
    \argmax \psi_3(y,z)= \Pi_{j=1}^m Y_j \times \argmax \{  z^Tb, \text{ s.t. } A^Tz \leq c(0), z \leq 0\}, \text{with } Y_j=\begin{cases}
    \R^m_+, \text{ if } p_j \neq 0, \\
   \R, \text{ if } p_j=0.
   \end{cases}
\end{equation}

\begin{remark}
    Similarly, the dual function of \eqref{AR3}, when $\theta\nup =0$, is given by  $\psi_3\nup (y,z) = z^T b+ \sum_{j \in J} p_j\nup y_j - \iota_{\{A^Tz \leq c\nup(0)\}}(z) - \iota_{ \{z\leq 0\} }(z)$, where $c\nup(u) \in\R^n$ is the vector, given blockwise by
\begin{equation*}
    c_i\nup(u) = \begin{cases}
        0, & \forall i \in I_t, ~\forall t\in\{ 1,...,T-1\}, \\
        -\sum_{j \in i^+} (p_j\nup+u_j) S_{T+1}(j), &\forall i \in I_T.
    \end{cases}
\end{equation*}
    In the case $\theta\nup >0$, the dual function is $\psi_3\nup (y,z)=z^T b+ \sum_{j =1}^m h_j\nup(y) - \iota_{\{A^Tz \leq c\nup(0)\}}(z) - \iota_{ \{z\leq 0\} }(z),$
    where
    \begin{equation*}
    h_j\nup(y)= \begin{cases}
        0, \text{ if } \frac{y_j}{\theta} >0 \\
        -\frac{1}{2\theta} y_j^2 , \text{ if } \frac{y_j}{\theta} \in[-p_j\nup,0] \\

        \frac{1}{2}\theta p_j^{\nu2} + y_jp_j\nup, \text{ if } \frac{y_j}{ \theta} < -p_j\nup
    \end{cases}
\end{equation*}
\end{remark}

\begin{remark}
    Note that the dual functions of the Rockafellian \eqref{R3} and the Approximating Rockafellian \eqref{AR3} require the constraint $u \leq 0$. Otherwise, since $c(u)$ is decreasing, there exists some $u^* \in \mathbb{R}^m$ such that $(c(u^*) - A^T z) \leq 0$ and letting $X$ tend to infinity in \eqref{*} for the Rockafellian perturbation \eqref{R3} with $\theta =0$,  or \eqref{**} in the case $\theta >0$, the dual functions become degenerate, that is, $\psi_3 \equiv -\infty$.
\end{remark}

\subsubsection{Tightness of Approximating Rockafellian}

In this section, we are interested in studying the tightness of the approximating tilted Rockafellian
$$f\nup_{y\nup,z\nup}(u,v,X):= f_3\nup(u,v,X) - \langle y\nup,u \rangle - \langle z\nup, v\rangle$$
with $f_3\nup$ defined in~\eqref{AR3}

\begin{lemma}[Tightness of $\{f^\nu_{y^\nu,z^\nu}\}$]
    Let $\{z\nup\} \subset \R^{N_T}$ be a sequence such that $z\nup \leq 0$ and $A^Tz\nup \leq c\nup(0)$ for $\nu \geq \nu_0$, let $\{y\nup\} \subset \R^m$ a sequence and let $\{f\nup_{y\nup,z\nup}\}_{\nu \in \N}$ the tilted Approximating Rockafellian of $f\nup$ \eqref{AR3}. Then the family $\{f\nup_{y\nup,z\nup}\}_{\nu \in \N}$ is tight.
\end{lemma}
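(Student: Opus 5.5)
The plan is to copy the structure of Lemma~\ref{lem:tightness of AR2}. Fix $\nu\ge\nu_0$ and $u$. I will show that the infimum of $f\nup_{y\nup,z\nup}(u,\cdot,\cdot)$ over $(v,X)$ is attained at $(v,X)=(-b,0)$, whatever $u$ is. Once that holds, the only variable left free is $u$, and it already lies in a fixed compact box. Then $B:=[-1,0]^m\times\{-b\}\times\{0\}$ works as a single compact set for every $\varepsilon>0$.

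\emph{Step 1 (the $u$-domain is uniformly bounded).} The indicator terms in~\eqref{AR3} give $-p\nup\le u\le 0$ on the effective domain. Since $p\nup\in\Delta_{m-1}$, every feasible $u$ lies in $U:=[-1,0]^m$, for every $\nu$. The rest of the argument is the same whether $\theta\nup=0$ or $\theta\nup>0$, because the regularization term depends only on $u$.

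\emph{Step 2 (inner minimization in $(v,X)$ for fixed $u\in[-p\nup,0]$).} Repeat the computation that led to~\eqref{*} and~\eqref{**}, with $p$ replaced by $p\nup$. Since $z\nup\le 0$, the term $-\langle z\nup,v\rangle$ is nondecreasing in each component of $v$. Over $v\ge AX-b$ it is therefore minimized at $v=AX-b$, and the objective becomes
\[
(c\nup(u)-A^Tz\nup)^TX+(z\nup)^Tb-\langle y\nup,u\rangle+\tfrac12\theta\nup\|u\|_2^2 .
\]
Next I use the monotonicity of $c\nup(\cdot)$. Asset prices are nonnegative and $u\le 0$, so $c\nup(u)\ge c\nup(0)\ge A^Tz\nup$ componentwise. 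The coefficient of $X$ is then nonnegative, and the infimum over $X\ge 0$ is attained at $X=0$, which gives $v=A\cdot 0-b=-b$. Hence
\[
\inf_{v,X} f\nup_{y\nup,z\nup}(u,v,X)=f\nup_{y\nup,z\nup}(u,-b,0)=(z\nup)^Tb-\langle y\nup,u\rangle+\tfrac12\theta\nup\|u\|_2^2 .
\]

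\emph{Step 3 (conclusion).} Taking the infimum over $u$ as well, Steps 1 and 2 give
\[
\inf_{\R^m\times\R^{N_T}\times\R^n} f\nup_{y\nup,z\nup}=\inf_{u\in[-p\nup,0]} f\nup_{y\nup,z\nup}(u,-b,0)=\inf_{B} f\nup_{y\nup,z\nup},
\]
because $[-p\nup,0]\times\{-b\}\times\{0\}\subset B$. The infimum is finite, being that of a continuous function over a compact set. So the tightness inequality holds for every $\varepsilon>0$ and every $\nu\ge\nu_0$, with no restriction on $\{y\nup\}$.

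The main point to check is Step 2. There I need $c\nup(u)\ge c\nup(0)$ for all $-p\nup\le u\le 0$. This is where the constraint $u\le 0$ and the nonnegativity of $S_{T+1}$ are essential; compare the remark on degeneracy when $u\le 0$ is dropped. I also need to confirm that dual feasibility $A^Tz\nup\le c\nup(0)$ is what makes the $X$-coefficient nonnegative. If either failed, the infimum in $X$ would escape to infinity, exactly as in the degenerate case.
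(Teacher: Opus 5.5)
Your proof is correct and follows the paper's argument. The paper uses the same compact set, written there as $\{-b\}\times[-1,0]^m\times\{0\}$ (your $[-1,0]^m\times\{-b\}\times\{0\}$ up to the ordering of factors), and the same identity $\inf_B f^\nu_{y^\nu,z^\nu} = (z^\nu)^T b + \inf_{-p^\nu\le u\le 0}\{\tfrac12\theta^\nu\|u\|_2^2 - \langle y^\nu,u\rangle\} = \inf f^\nu_{y^\nu,z^\nu}$. Your Step~2 makes explicit the inner minimization over $(v,X)$, via $c^\nu(u)\ge c^\nu(0)\ge A^Tz^\nu$ for $-p^\nu\le u\le 0$, which the paper asserts without argument by implicitly reusing its computation of $\psi_3$.
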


\begin{proof}
    Let $\varepsilon >0$ and the sequence $\{z\nup\}_{\nu \in \N}$ satisfies $z\nup \leq 0$ and $A^Tz\nup \leq c\nup(0)$ for $\nu \geq \nu_0$, we define $B_\varepsilon = \{-b\} \times [-1,0]^m \times \{0\}$ (independent of $\varepsilon$), then
    \begin{align*}
        \inf_{(u,v,X) \in B_\varepsilon} f\nup_{y\nup,z\nup}(u,v,X) &= {z\nup}^Tb + \inf_{-p\nup \leq u \leq 0}\frac{1}{2}\theta\nup\|u\|^2  - \langle y\nup,u \rangle\\
        &= \inf_{(u,v,X) \in \R^m \times \R^{N_T}\times \R^n} f\nup_{y\nup,z\nup}(u,v,X) \\
        &\leq \inf_{(u,v,X) \in \R^m \times \R^{N_T}\times \R^n} f\nup_{y\nup,z\nup}(u,v,X) + \varepsilon \ \ \forall \nu \geq\nu_0
    \end{align*}
    Note that this proof works regardless of whether $\theta\nup > 0$ or $\theta\nup = 0$.
\end{proof}

\subsubsection{Hypo-convergence of dual functions}

\begin{theorem}[Hypo-convergence of Dual Functions]\label{thm:hypo3}
    If $\theta\nup$ converge to $\theta$ and $p\nup$ converge to $p$, then the dual function of \eqref{AR3}  hypo-converge to the dual function of \eqref{R3}. Furthermore, for any sequence $\{y\nup,z\nup\}$ that converges to $y,z \in$ dom$(-\psi_3)$, we have that $\psi_3\nup(y\nup,z\nup) \rightarrow \psi_3(y,z)$.
\end{theorem}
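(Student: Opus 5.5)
The plan is to split $\psi_3^\nu$ into a finite smooth part and an indicator part, as in the explicit formulas computed above:
\[
\psi_3^\nu(y,z)=z^Tb+H^\nu(y)-\iota_{\{A^Tz\le c^\nu(0)\}}(z)-\iota_{\{z\le 0\}}(z).
\]
Here $H^\nu(y)=\sum_{j=1}^m h_j^\nu(y)$, and in $h_j^\nu$ I read $\theta$ as $\theta^\nu$. When $\theta^\nu=0$ we have $H^\nu(y)=\sum_{j\in J}p_j^\nu y_j=\sum_j p_j^\nu\min\{y_j,0\}$. The function $\psi_3$ has the same form, with $H$, $p$, $\theta$ and $c(0)$ in place of their approximations.

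\textbf{Step 1: hypo-convergence.} I would argue exactly as in Theorem~\ref{thm:hypo2}. Epi-convergence $f_3^\nu\xrightarrow{e}f_3$ is Proposition~\ref{prop:epi3}. By \cite[Theorem~5.1(a)]{deride2024approximations}, it then suffices to produce, for each $(y,z)\in\operatorname{dom}(-\psi_3)$, a sequence $(y^\nu,z^\nu)\to(y,z)$ along which the tilted Rockafellians are tight. I take $y^\nu:=y$ and $z^\nu:=z+\varepsilon_\nu d$. Here $d$ is the vector of Lemma~\ref{lem:interior-d}, and
\[
\varepsilon_\nu:=\max_l\max\{0,(c_l^\nu(0)-c_l(0))/(A^Td)_l\}.
\]
Since $c^\nu(0)\to c(0)$ follows from $p^\nu\to p$, we get $\varepsilon_\nu\to0$, $z^\nu\le 0$ and $A^Tz^\nu\le c^\nu(0)$. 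The tightness lemma for $f^\nu_{y^\nu,z^\nu}$ then applies, for both $\theta^\nu>0$ and $\theta^\nu=0$.

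\textbf{Step 2: convergence along sequences.} Let $(y^\nu,z^\nu)\to(y,z)\in\operatorname{dom}(-\psi_3)$. The upper estimate $\limsup_\nu\psi_3^\nu(y^\nu,z^\nu)\le\psi_3(y,z)$ is the hypo-liminf half of Step 1. For the reverse inequality I handle the finite part directly. First, $z^{\nu T}b\to z^Tb$. Next I would show $H^\nu(y^\nu)\to H(y)$, in three cases.
\begin{itemize}
\item If $\theta>0$, then $(y,p,\theta)\mapsto\inf_{-p_j\le u_j\le0}\{\tfrac12\theta u_j^2-y_ju_j\}$ is jointly continuous on $\R\times[0,1]\times]0,\infty[$, since it is the minimum of a continuous function over a continuously moving compact interval.
\item If $\theta=0$ and $\theta^\nu=0$, the term is $p_j^\nu\min\{y_j^\nu,0\}$, which is continuous in $(p_j^\nu,y_j^\nu)$.
\item If $\theta=0$ and $\theta^\nu>0$, I use the sandwich
\[
p_j\min\{y_j,0\}\le \inf_{-p_j\le u_j\le 0}\bigl\{\tfrac12\theta^\nu u_j^2-y_ju_j\bigr\}\le p_j\min\{y_j,0\}+\tfrac12\theta^\nu p_j^2 .
\]
It holds because $-y_ju_j\ge p_j\min\{y_j,0\}$ on $[-p_j,0]$, and the infimum is at most the value at the minimizer of the linear part. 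This gives uniform convergence to the $\theta=0$ limit.
\end{itemize}
Together these give $z^{\nu T}b+H^\nu(y^\nu)\to z^Tb+H(y)=\psi_3(y,z)$.

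\textbf{Main obstacle: the indicator terms.} The finite parts converge, so the claim reduces to showing that the indicator terms of $\psi_3^\nu$ at $(y^\nu,z^\nu)$ vanish eventually. The first indicator does not involve $y$, which simplifies this. I would first try to deduce $z^\nu\le0$ and $A^Tz^\nu\le c^\nu(0)$ for large $\nu$ from $z\le0$, $A^Tz\le c(0)$, and the strict slack $A^Td<0$ of Lemma~\ref{lem:interior-d}. This works directly whenever $(y,z)$ has slack in the active constraints. On the boundary of $\operatorname{dom}(-\psi_3)$ the constraints are only limits of the approximate ones, so membership can fail for an arbitrary sequence. There I would use the sequences of Step 1, perturbed along $d$, which lie in $\operatorname{dom}(-\psi_3^\nu)$. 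In other words, I would use the statement for sequences that eventually lie in $\operatorname{dom}(-\psi_3^\nu)$, which is the reading under which ``converges to $(y,z)\in\operatorname{dom}(-\psi_3)$'' gives finite values. Then $\psi_3^\nu(y^\nu,z^\nu)$ equals the finite part, and the limit follows from the computation above.
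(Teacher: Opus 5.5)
Your Step 1 is the paper's proof. It uses the same three ingredients: epi-convergence from Proposition~\ref{prop:epi3}, tightness of the tilted Rockafellians along $y^\nu=y$, $z^\nu=z+\varepsilon_\nu d$ (the construction of Theorem~\ref{thm:hypo2}, which the paper invokes), and \cite[Thm.~5.1]{deride2024approximations}.

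For the ``furthermore'' clause you take a different route. The paper simply cites the same theorem again. You instead work from the closed-form dual and show that the finite part $z^Tb+H^\nu(y)$ converges along every sequence. Your three cases can be merged: $(y_j,p_j,\theta)\mapsto\min_{u_j\in[-p_j,0]}\{\tfrac12\theta u_j^2-y_ju_j\}$ is jointly continuous on $\R\times[0,1]\times[0,\infty[$ by Berge's theorem, $\theta=0$ included, and at $\theta=0$ it equals $p_j\min\{y_j,0\}$. Your explicit computation would in fact give both hypo-convergence inequalities directly. Outside $\R^m\times Z$ the indicator terms are eventually $-\infty$. Inside, the finite part converges, and $z+\varepsilon_\nu d$ is a recovery sequence. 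So Step 1's appeal to Proposition~\ref{prop:epi3}, the tightness lemma and the abstract theorem becomes unnecessary. What the paper's structural route buys is that it does not depend on having a closed form.

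Your restriction to sequences that eventually lie in $\operatorname{dom}(-\psi_3^\nu)$ is not a weakness of your argument. It is forced, because the clause as literally stated is false. Take $p^\nu\equiv p$ and $\theta^\nu\equiv\theta$, so that $\psi_3^\nu=\psi_3$. Let $Z:=\{z\le 0:\ A^Tz\le c(0)\}$ and let $\bar z$ be any boundary point of $Z$, for instance a maximizer of $z^Tb$ over $Z$ (recall $b\neq 0$). Some constraint is active at $\bar z$:
\begin{itemize}
\item if $\bar z_i=0$, put $z^\nu:=\bar z+e_i/\nu$;
\item if $(A^T\bar z)_l=c_l(0)$, put $z^\nu:=\bar z-d/\nu$, with $d$ from Lemma~\ref{lem:interior-d}.
\end{itemize}
In both cases $z^\nu\notin Z$ and $z^\nu\to\bar z$. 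Hence $\psi_3^\nu(y,z^\nu)=-\infty$ for every $\nu$, while $\psi_3(y,\bar z)$ is finite.

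The paper's proof does not avoid this. Convergence through the epi-convergence/tightness mechanism requires $\{f^\nu_{y^\nu,z^\nu}\}$ to be tight, and the tightness lemma supplies that only when $z^\nu\le 0$ and $A^Tz^\nu\le c^\nu(0)$ eventually. So the provable statement is the one you prove. Convergence $\psi_3^\nu(y^\nu,z^\nu)\to\psi_3(y,z)$ holds along sequences eventually in $\operatorname{dom}(-\psi_3^\nu)$. Along arbitrary sequences it holds exactly when $z\in\operatorname{int}Z$.
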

\begin{proof}
    Note that since $p\nup \to p$ and $\theta\nup \to \theta$ we have that the epi-convergence of $f\nup$ to $f$ is given by Propositions \ref{prop:epi3}. Let $(y,z) \in$ dom$(-\psi)$, since $\{f\nup_{y\nup,z\nup}\}_{\nu \in \mathbb{N}}$ is tight for sequence $z\nup$ such that $z\nup \leq 0$ and $A^Tz\nup \leq c\nup(0)$ for $\nu \geq \nu_0$ and $c\nup(0)$ converge to $c(0)$, then exist $z\nup$ such that converge to $z$ (the proof of the existence of this sequence is identical to the one given in Lemma \ref{lem:interior-d}) and we can chose any sequence $\{y\nup\}$ that converge to $y$. Then, $\{f\nup_{y\nup,z\nup}\}$ is tight for the sequence $\{y\nup,z\nup\}_{\nu \in \N}$. Finally, using \cite[Thm.~5.1]{deride2024approximations} the result follows.
\end{proof}

\subsubsection{Strong duality}

\begin{theorem}[Strong duality]\label{thm:SD3}
The dual value of \eqref{R3} coincides with the primal value, regardless of whether $\theta = 0$ or $\theta > 0$.
\end{theorem}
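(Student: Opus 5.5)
The proof computes both sides explicitly and compares them, using the dual function $\psi_3$ obtained above together with Theorem~\ref{thm:SD2}.

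\textbf{Primal side.} At $(u,v)=(0,0)$ the Rockafellian \eqref{R3} reduces to $\varphi$, so the primal value is $V=\min_{X\in\mathcal{X}}\varphi(X)=\min\{c(0)^TX : AX\le b,\ X\ge 0\}$.

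\textbf{Dual side.} The computation above shows that in both cases $\theta=0$ and $\theta>0$
\[
\psi_3(y,z)=z^Tb+H(y)-\iota_{\{A^Tz\le c(0)\}}(z)-\iota_{\{z\le 0\}}(z),
\]
so the dual value splits as
\[
\sup_{y,z}\psi_3(y,z)=\sup_{y}H(y)+\sup\{z^Tb : A^Tz\le c(0),\ z\le 0\}.
\]
Here $H(y)=\sum_{j\in J}p_jy_j$ when $\theta=0$, and $H(y)=\sum_j h_j(y)$ when $\theta>0$.

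\textbf{The $y$-part.} We show $\sup_y H(y)=0$. For $\theta=0$, each term $p_jy_j$ with $y_j\le 0$ is nonpositive, and $H(y)=0$ for any $y\ge 0$. For $\theta>0$, the definition gives $h_j(y)=\inf_{-p_j\le u_j\le 0}\{\tfrac12\theta u_j^2-y_ju_j\}$. This infimum is at most $0$ (take $u_j=0$), and it equals $0$ when $y_j\ge 0$. Hence in both cases $\sup_y H=0$, attained on the sets $Y_j$ in \eqref{argmax psi3.1} and \eqref{argmax psi3.2}.

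\textbf{The $z$-part.} Since $c(0)=c$ (the cost vector of Subsection~\ref{sec:4.2.1}), the $z$-part is exactly $\sup_z\psi_2(z)$. By Theorem~\ref{thm:SD2} this equals $V$. Alternatively, it is the LP dual of $\min\{c^TX : AX\le b,\ X\ge 0\}$, and LP strong duality applies because the primal is feasible and bounded, $\mathcal{X}$ being nonempty and compact by Remark~\ref{rem:compactness-X}. Adding the two parts gives $\sup\psi_3=0+V=V$.

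\textbf{Main obstacle.} The delicate step is justifying the dual formula itself, namely the passage from the infimum over $(u,X)$ to the condition $A^Tz\le c(0)$. This is where the constraint $u\le 0$ is essential: because $c(u)\ge c(0)$ componentwise on $-p\le u\le 0$, the infimum over $X\ge 0$ of $(c(u)-A^Tz)^TX$ is finite for all such $u$ exactly when $A^Tz\le c(0)$. It then equals $0$, so the $X$ and $u$ minimizations decouple. I would verify this decoupling carefully. A cross-check is the one-sided bound $\psi_3(0,z^*)=z^{*T}b+\sum_j h_j(0)=V$ at an LP dual optimum $z^*$, where $h_j(0)=0$. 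Combined with weak duality $\sup\psi_3\le\inf f_3(0,0,\cdot)=V$, this proves the theorem directly without needing the full argmax description.
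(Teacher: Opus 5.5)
Your proposal is correct and follows the paper's own route. The paper also splits $\psi_3$ into a $y$-part whose supremum is $0$, attained on $y\ge 0$ (this is what the argmax descriptions \eqref{argmax psi3.1}--\eqref{argmax psi3.2} encode), and a $z$-part that is exactly $\sup\psi_2$, hence the primal value by Theorem~\ref{thm:SD2}. Your explicit check that $\sup_y H=0$ and the weak-duality cross-check at $(0,z^*)$ make the argument more self-contained than the paper's terse version. One small correction: feasibility of \eqref{P} is a standing assumption of the paper, not a consequence of Remark~\ref{rem:compactness-X}, which only gives compactness.
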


\begin{proof}
    By Theorem \eqref{thm:SD2}, the problem
    \begin{align*}
        &\max_{z \in \mathbb{R}^{N_T}} z^T b \\
        &\text{s.t. } A^T z \leq c(0), \quad z \leq 0
    \end{align*}
    has the same optimal value as the primal problem.
    Recalling that $\argmax \psi_3$ is characterized by~\eqref{argmax psi3.1} for $\theta=0$ and by~\eqref{argmax psi3.2} for $\theta>0$, with both characterizations being equivalent. Hence, for any $y \in \mathbb{R}_+^m$, the value of the dual function $\psi_3(y,z)$ remains unchanged. Therefore, the dual optimal value coincides with the primal optimal value.
\end{proof}

\begin{proposition}\label{prop:dual tight 2}
    Suppose that $p\nup \rightarrow p$. Then the approximating dual function $-\psi_3\nup(y,z)$ is tight.
\end{proposition}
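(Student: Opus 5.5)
The plan mirrors Proposition~\ref{prop:dual tight}, exploiting the separable structure of $\psi_3^\nu$. Write
\[
-\psi_3^\nu(y,z) = \Bigl(-z^Tb + \iota_{\{A^Tz\le c^\nu(0)\}}(z) + \iota_{\{z\le 0\}}(z)\Bigr) + \Bigl(-\textstyle\sum_j h_j^\nu(y_j)\Bigr).
\]
In the case $\theta^\nu=0$, the second summand is replaced by $-\sum_{j\in J}p_j^\nu y_j$. The $z$-part is exactly $-\psi_2^\nu$ with $c^\nu$ replaced by $c^\nu(0)$, since $c^\nu(0)=c^\nu$ blockwise. The $y$-part decouples into one-dimensional functions. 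Because the infimum of a separable sum is the sum of the infima, it suffices to produce a compact set $B^z$ for the $z$-part and a compact box $B^y$ for the $y$-part. We then take $B_\varepsilon:=B^y\times B^z$; the near-optimality defects add, so we apply each piece with $\varepsilon/2$.

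\textbf{The $z$-part.} I would invoke Proposition~\ref{prop:dual tight} directly. Since $p^\nu\to p$ gives $c^\nu(0)\to c(0)$, the Lipschitz stability of LP solution sets (Theorem~2.4 of \cite{mangasarian1987lipschitz}) yields maximizers $z^\nu\in S(c^\nu(0))$ with $\|z^\nu-\bar z\|_\infty\le L\|c^\nu(0)-c(0)\|$. Here $\bar z$ is a maximizer of the limit LP, which exists by Theorem~\ref{thm:SD2}. Hence $B^z:=B[\bar z,1]$ contains a minimizer of the $z$-part for all large $\nu$.

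\textbf{The $y$-part.} Each term $-h_j^\nu(y_j)=\sup_{-p_j^\nu\le u_j\le 0}\{y_ju_j-\tfrac12\theta^\nu u_j^2\}$ is a nonnegative function. It equals $0$ for every $y_j\ge 0$, from the branch $y_j/\theta>0$ and also when $\theta^\nu=0$. Therefore its infimum $0$ is attained at $y_j=0$ for every $\nu$, independently of $p^\nu$ and $\theta^\nu$. I would take $B^y:=\{0\}\subset\R^m$, or $[0,1]^m$ for safety. In the $\theta^\nu=0$ case I would use the explicit formula $\sum_{j\in J}p^\nu_jy_j$: here $J=\{j:y_j\le 0\}$, so the term is $\le0$ and vanishes at $y=0$. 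This matches the argmax description \eqref{argmax psi3.1}--\eqref{argmax psi3.2}, where $Y_j\supseteq\R_+$.

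\textbf{Main obstacle.} The only delicate point is the $z$-part, namely the uniform localization of LP maximizers under perturbation of the cost vector $c^\nu(0)$. For this I rely entirely on the Mangasarian--Shiau Lipschitz result, exactly as in Proposition~\ref{prop:dual tight}. One detail must be checked: $S(c^\nu(0))\neq\emptyset$ for large $\nu$. This follows from Theorem~\ref{thm:SD2} applied with $p^\nu$ in place of $p$, using feasibility of the approximate primal; the feasible set $\mathcal X$ does not depend on $p^\nu$. Finiteness of the infima is then guaranteed, and the inequality $\inf_{B_\varepsilon}(-\psi_3^\nu)\le\inf(-\psi_3^\nu)+\varepsilon$ holds for all $\nu\ge\nu_0$.
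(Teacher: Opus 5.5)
Your proposal is correct and takes essentially the same route as the paper. The $z$-part of $-\psi_3^\nu$ is exactly $-\psi_2^\nu$ (since $c^\nu(0)=c^\nu$), and its maximizers are localized near $\bar z$ by the Mangasarian--Shiau Lipschitz bound as in Proposition~\ref{prop:dual tight}, while the $y$-part is minimized on $\R^m_+$ (you take $y=0$); your separability bookkeeping just spells out what the paper's one-line proof leaves implicit. One small precision: nonemptiness of $S(c^\nu(0))$ comes from LP duality (a finite primal value forces dual attainment), not from the value equality in Theorem~\ref{thm:SD2} alone, though the paper makes the same appeal.
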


\begin{proof}
    Similar as in Proposition~\ref{prop:dual tight}, the problem $\max_{z \in \mathbb{R}^{N_T}} z^T b \text{ s.t. } A^T z \leq c^\nu(0), z \leq 0$ has the same Lipschitz properties in the set of optimizers. Then, since $p^\nu \to p$, we have $c^\nu(0) \to c(0)$, there exists a compact set $B$ such that, for any $y \in \mathbb{R}^m_+$, $-\psi_3^\nu(y,z)$ is tight.
\end{proof}

\begin{remark}
    By Proposition~\ref{prop:dual tight 2} and Theorem~\ref{thm:hypo3}, we have that the maximizers of the approximate dual problems converge to the original ones.
\end{remark}

\subsubsection{Economic interpretation}

The interpretation of the dual variables is similar to that presented previously, since $\argmax \psi_3(y,z)$
can be decomposed as $\R^m_+ \times \argmax \psi(z),$ where $z$ corresponds to the dual variable associated with the claim perturbation. Therefore, in both cases ($\theta =0$ and $\theta >0$),  the information provided by $y$ is the same as in case~2.2 of Section~\ref{sec:4.1.3}, while the information provided by $z$ coincides with that described in Section~\ref{sec:4.2.5}.

\section{Supermartingale property and equivalent dual formulation}\label{sec:5}

Consider the classical Lagrangian associated with the claim perturbation introduced in Section~\ref{sec:3.2}, with the duality pairing defined by $\langle a,b \rangle = \mathbb{E}[ab]$. To ensure that the duality pairing is well-defined, we consider perturbations $u \in L^p(\Omega,\mathcal{A},\mathbb{P})$ and multipliers $\lambda \in L^q(\Omega,\mathcal{A},\mathbb{P})$, where $p$ and $q$ are conjugate exponents satisfying $\frac{1}{p}+\frac{1}{q}=1$. The Lagrangian is then given by
\begin{align*}
l(X,\lambda,\mu)
={}& -\mathbb{E}\big[\langle S_{T+1},X_T\rangle\big]
+ \lambda_0\big(\langle S_0,X_0\rangle-G_0\big) \\
&\qquad\qquad+ \sum_{t=1}^T
\mathbb{E}\big[
\lambda_t\big(\langle S_t,X_t-X_{t-1}\rangle-G_t\big)
\big]
- \sum_{t=0}^T
\mathbb{E}\big[\langle\mu_t,X_t\rangle\big].
\end{align*}
Let us examine the necessary conditions for the infimum over $X$ to be finite. Isolating the terms involving $X_0$ yields
\begin{align*}
\inf_{X_0 \in \R} \mathbb{E}\big[\langle \lambda_0 S_0 - \mu_0 - \mathbb{E}[\lambda_1 S_1 \mid \mathcal{F}_0], X_0 \rangle\big].
\end{align*}
For this infimum to be finite, the term inside the inner product must vanish almost surely. Since $\mu_0 \geq 0$, we have $\mathbb{E}[\lambda_1 S_1 \mid \mathcal{F}_0] \leq \lambda_0 S_0$. Analogously, optimizing over $X_t$ for $t\in\{ 1, \dots, T-1\}$ yields the conditions $\mathbb{E}[\lambda_{t+1}S_{t+1} \mid \mathcal{F}_t] \leq \lambda_t S_t$. Optimizing over $X_T$ results in $\mathbb{E}[S_{T+1} \mid \mathcal{F}_T] \leq \lambda_T S_T$. Consequently, the dual problem associated with the Lagrangian is given by
\begin{align}
    \max_{\lambda \geq 0} \quad & -\lambda_0 G_0 - \sum_{t=1}^T \mathbb{E}[\lambda_t G_t] \label{dual problem lp} \\
    \text{s.t.} \quad & \mathbb{E}[\lambda_{t+1} S_{t+1} \mid \mathcal{F}_t] \leq \lambda_t S_t, \quad \forall t \in \{0, \dots, T-1\}, \label{restriction lp} \\
    & \mathbb{E}[S_{T+1} \mid \mathcal{F}_T] \leq \lambda_T S_T. \label{restriction final}
\end{align}

We now derive an equivalent formulation of the dual problem \eqref{dual problem lp} in terms of risk-neutral measures. This characterization enables us to establish a connection with the mathematical finance literature, specifically with the Fundamental Theorem of Asset Pricing under short-selling prohibitions \cite{pulido2014fundamental}, following an approach similar to that of \cite{Dahl}. Before doing so, we present a couple of definitions.

\begin{definition}
 A discounted market is a market where there exists an asset whose value is equal to one at every time $t \in \{0,\ldots,T+1\}$ and for every possible scenario. Since $S^0$ is a risk--free asset, the normalization is obtained by dividing  the risky assets by the value of $S^0_t$ for each period $t$, thus obtaining:
\[
S_t^0(\xi_{0:t}) = 1,
\qquad
\forall t \in \{0,\ldots,T+1\},\ \forall \xi_{0:t}.
\]

\end{definition}

\begin{definition}
Let $\mathcal{M}$ be the set of finite measures defined by
\begin{equation*}
    \mathcal{M} := \left\{ Q \sim \mathbb{P} : Q(\Omega) < \infty, \, \{S_t\}_{t=0}^T \text{ is a } \tilde{Q}\text{-supermartingale}, \text{ and } \frac{dQ}{d\mathbb{P}}S_T \geq \mathbb{E}[S_{T+1} \mid \mathcal{F}_T] \right\},
\end{equation*}
where $\tilde{Q}$ is the normalized probability measure defined by $\tilde{Q}(A) = \frac{Q(A)}{Q(\Omega)}$ for all $A \in \mathcal{F}$.
\end{definition}

\begin{theorem}[Equivalent supermartingale-measure formulation]\label{thm:FTAP}
Consider a discounted market. Assume that the optimal portfolio satisfies $X_t^0 > 0$ almost surely for all $t \in \{0, \dots, T\}$ (i.e., the risk--free asset is strictly held in the portfolio). Then, the dual problem \eqref{dual problem lp}-\eqref{restriction final} is equivalent to
\begin{equation}
        \sup_{Q \in \mathcal{M}} -\sum_{t=0}^T \mathbb{E}^{\tilde{Q}}[G_t] \cdot Q(\Omega). \label{Qmartingale}
    \end{equation}
\end{theorem}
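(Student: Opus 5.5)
The plan is to build an explicit, value-preserving correspondence between feasible multipliers $\lambda=(\lambda_0,\dots,\lambda_T)$ of \eqref{dual problem lp}--\eqref{restriction final} and measures $Q\in\mathcal{M}$. The natural choice is the density $\frac{dQ}{d\mathbb{P}}:=\lambda_T$. The first step is to use the discounted-market normalization $S^0_t\equiv 1$ together with the hypothesis $X^0_t>0$ a.s. Complementary slackness for the multiplier $\mu_t$ of $X_t\geq 0$ then forces $\mu^0_t=0$. The zeroth component of \eqref{restriction lp} therefore holds with equality: $\mathbb{E}[\lambda_{t+1}\mid\mathcal{F}_t]=\lambda_t$ for $t\le T-1$. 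Likewise, the zeroth component of \eqref{restriction final} gives $\lambda_T=\mathbb{E}[S^0_{T+1}\mid\mathcal{F}_T]=1$ a.s. Actually, $\lambda$ is then a $\mathbb{P}$-martingale. This is the step that converts the dual multipliers into a density process, and I would restrict the dual feasible set to multipliers compatible with an optimal primal solution. By strong duality (Theorem~\ref{thm:SD2}, carried over to this Lagrangian) this restriction does not change the optimal value.

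Next, I will define $Z:=\lambda_T\ge 0$, $Q(A):=\mathbb{E}[Z\mathbf 1_A]$, and $\tilde Q=Q/Q(\Omega)$, with $Q(\Omega)=\mathbb{E}[\lambda_T]=\lambda_0$. The martingale property gives $\lambda_t=\mathbb{E}[Z\mid\mathcal{F}_t]$. Bayes' formula then gives
\[
\mathbb{E}^{\tilde Q}[S_{t+1}\mid\mathcal{F}_t]=\frac{\mathbb{E}[\lambda_{t+1}S_{t+1}\mid\mathcal{F}_t]}{\lambda_t}\le S_t,
\]
using \eqref{restriction lp}, so $S$ is a $\tilde Q$-supermartingale. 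The terminal constraint \eqref{restriction final} reads $\frac{dQ}{d\mathbb{P}}S_T\ge\mathbb{E}[S_{T+1}\mid\mathcal{F}_T]$, which is exactly the last requirement in $\mathcal{M}$. For the objective, the tower property gives $\mathbb{E}[\lambda_tG_t]=\mathbb{E}[ZG_t]=\mathbb{E}^{Q}[G_t]=Q(\Omega)\,\mathbb{E}^{\tilde Q}[G_t]$, so the dual objective equals $-\sum_t\mathbb{E}^{\tilde Q}[G_t]\,Q(\Omega)$. For the converse, given $Q\in\mathcal{M}$ I will set $\lambda_t:=\mathbb{E}[\frac{dQ}{d\mathbb{P}}\mid\mathcal{F}_t]$ and reverse each of these computations, obtaining a feasible $\lambda$ with the same value.

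The main obstacle is the equivalence $Q\sim\mathbb{P}$, i.e.\ $Z>0$ a.s., whereas the dual only gives $\lambda\ge0$. There is also a consistency issue with the zeroth-component equalities, since the normalization seems to pin $\lambda_T\equiv1$. I would handle positivity via the terminal constraint: $\lambda_TS_T\ge\mathbb{E}[S_{T+1}\mid\mathcal{F}_T]>0$ because prices are strictly positive, so $\lambda_T>0$, and the martingale property then propagates positivity to every $\lambda_t$. Finiteness of $Q(\Omega)$ follows from integrability, since on a finite tree everything is bounded. If the equality $\mathbb{E}[\lambda_{t+1}\mid\mathcal{F}_t]=\lambda_t$ turns out not to follow cleanly, I would fall back on the $S^0$-component inequality. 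Then $\lambda$ is a positive supermartingale, and I would replace it by the martingale $\mathbb{E}[\lambda_T\mid\mathcal{F}_t]\le\lambda_t$. This replacement preserves feasibility and does not decrease the objective, since $-G_t>0$ multiplies smaller $\lambda_t$ only at $t=0$. This step requires checking the sign argument carefully.
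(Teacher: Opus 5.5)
Your main line is correct and is essentially the paper's proof. You send $Q\in\mathcal{M}$ to $\lambda_t=\mathbb{E}[\frac{dQ}{d\mathbb{P}}\mid\mathcal{F}_t]$, and you send an optimal $\lambda$ to the measure with density $\lambda_T$. Complementary slackness ($\mu^0_t=0$ because $X^0_t>0$) makes $\lambda$ a $\mathbb{P}$-martingale, and the terminal constraint gives positivity. The ``consistency issue'' you flag is harmless: it only says the optimal multiplier is $\lambda\equiv 1$, i.e.\ $Q=\mathbb{P}$, which dual feasibility places in $\mathcal{M}$.

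Drop the fallback paragraph, though. For $t\ge 1$ one has $-G_t>0$, so replacing $\lambda_t$ by the smaller $\mathbb{E}[\lambda_T\mid\mathcal{F}_t]$ can lower the objective when $T\ge 2$, and it need not preserve \eqref{restriction lp} either. Since the complementary-slackness step goes through, you never need the fallback.
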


\begin{remark}
As noted in the modeling discussion, the condition $X_t^0 > 0$ is not an explicit restriction hardcoded into the dual optimization problem itself. Instead, it is an interiority condition on the primal optimal allocation. By complementary slackness, this condition nullifies the corresponding Lagrange multipliers for the short-selling constraint on the risk--free asset ($\mu_t^0 = 0$), which structurally forces the dual state-price density process $(\lambda_t)_{t=0}^T$ to be a pure $\mathbb{P}$-martingale rather than a supermartingale.
\end{remark}

\begin{proof}
(i) Let $Q \in \mathcal{M}$. We construct a vector $\lambda$ that is feasible for the dual problem \eqref{dual problem lp}--\eqref{restriction final}. Let $\lambda_T := \frac{dQ}{d\mathbb{P}}$ be the Radon-Nikodym derivative and $\lambda_t = \mathbb{E}[\lambda_T \mid \mathcal{F}_t]$ for $t\in\{ 0, \dots, T-1\}$. Then $\frac{d\tilde{Q}}{d\mathbb{P}} = \frac{\lambda_T}{\mathbb{E}[\lambda_T]} =: \nu_T$ and $\nu_t := \frac{\lambda_t}{\mathbb{E}[\lambda_T]}$. Since $Q \in \mathcal{M}$, constraint \eqref{restriction final} holds by definition. Let us now verify that $(\lambda_t)_{t=0}^T$ satisfies the constraints in \eqref{restriction lp}.

For $t \in\{ 0, \dots, T-1\}$, we have to prove
\begin{equation}
\mathbb{E}[\lambda_{t+1}S_{t+1} \mid \mathcal{F}_{t}] \leq \lambda_{t}S_{t} = \mathbb{E}[\lambda_{t}S_{t} \mid \mathcal{F}_{t}]. \label{eq4}
\end{equation}
For this, note that by Bayes' rule for conditional expectations under a change of measure:
\begin{equation*}
\mathbb{E}[\nu_{t+1}S_{t+1} \mid \mathcal{F}_{t}] = \mathbb{E}[\nu_T \mid \mathcal{F}_t] \mathbb{E}^{\tilde{Q}}[S_{t+1} \mid \mathcal{F}_t],
\end{equation*}
which, upon multiplying both sides by $\mathbb{E}[\lambda_T]$, is equivalent to
\begin{equation}
\mathbb{E}[\lambda_{t+1}S_{t+1} \mid \mathcal{F}_{t}] = \mathbb{E}[\lambda_T \mid \mathcal{F}_t] \mathbb{E}^{\tilde{Q}}[S_{t+1} \mid \mathcal{F}_t]. \label{eq5}
\end{equation}
On the other hand, we have
\begin{equation*}
\mathbb{E}[\nu_{t}S_{t} \mid \mathcal{F}_{t}] = \mathbb{E}[\nu_T \mid \mathcal{F}_t] \mathbb{E}^{\tilde{Q}}[S_t \mid \mathcal{F}_t].
\end{equation*}
Then, multiplying by $\mathbb{E}[\lambda_T]$, we obtain
\begin{equation}
\mathbb{E}[\lambda_{t}S_{t} \mid \mathcal{F}_{t}] = \mathbb{E}[\lambda_T \mid \mathcal{F}_t] \mathbb{E}^{\tilde{Q}}[S_t \mid \mathcal{F}_t]. \label{eq6}
\end{equation}
Combining \eqref{eq5} and \eqref{eq6}, proving \eqref{eq4} is equivalent to proving
\begin{equation*}
\mathbb{E}[\lambda_T \mid \mathcal{F}_t] \mathbb{E}^{\tilde{Q}}[S_{t+1} \mid \mathcal{F}_t] \leq \mathbb{E}[\lambda_T \mid \mathcal{F}_t] \mathbb{E}^{\tilde{Q}}[S_t \mid \mathcal{F}_t],
\end{equation*}
which always holds since $\lambda \geq 0$ and $Q \in \mathcal{M}$, meaning $\{S_t\}_{t=0}^T$ is a $\tilde{Q}$-supermartingale.

(ii) Now, let us prove that if $\lambda$ is optimal solution for problem \eqref{dual problem lp}--\eqref{restriction final}, then there exists a measure $Q \in \mathcal{M}$.

Let $Q(A) = \int_{A} \lambda_T \, d\mathbb{P}$ for all $A \in \mathcal{F}$, then the condition $\frac{dQ}{d\mathbb{P}}S_T \geq \mathbb{E}[S_{T+1} \mid \mathcal{F}_T]$ holds by definition.

Let us prove that, for all $t \in \{0, \dots, T-1\}$ the following holds
\begin{equation*}
    \mathbb{E}^{\tilde{Q}}[S_{t+1} \mid \mathcal{F}_t] \leq \mathbb{E}^{\tilde{Q}}[S_t \mid \mathcal{F}_t] = S_t.
\end{equation*}
Note that, for all $A \in \mathcal{F}_t$
\begin{align}
    \int_{A} \lambda_t S_t \, d\mathbb{P} &\geq \int_A \lambda_{t+1} S_{t+1} \, d\mathbb{P} \nonumber \\
    &= \int_A \mathbb{E}[\lambda_{t+1} S_{t+1} \mid \mathcal{F}_t] \, d\mathbb{P} \nonumber \\
    &= \int_A \mathbb{E}\big[\mathbb{E}[\lambda_{T} \mid \mathcal{F}_{t+1}] S_{t+1} \mid \mathcal{F}_t\big] \, d\mathbb{P} \nonumber \\
    &= \int_A \mathbb{E}\big[\mathbb{E}[\lambda_{T} S_{t+1} \mid \mathcal{F}_{t+1}] \mid \mathcal{F}_t\big] \, d\mathbb{P} \nonumber \\
    &= \int_A \mathbb{E}[\lambda_{T} S_{t+1} \mid \mathcal{F}_t] \, d\mathbb{P} = \int_A \lambda_t \mathbb{E}^{\tilde{Q}}[ S_{t+1} \mid \mathcal{F}_t] \, d\mathbb{P}. \label{martingala de l}
\end{align}
Where the last equality in \eqref{martingala de l} follows by the condition $X_t^0 > 0$ for all $t \in \{0, \dots, T\}$ together with complementary slackness; from this, we obtain that $\mathbb{E}[\lambda_{t+1} \mid \mathcal{F}_t] = \lambda_t$, which means that $(\lambda_t)_{t=0}^T$ is a true $\mathbb{P}$-martingale. Then,
\begin{equation*}
\int_A \lambda_t \big( S_t - \mathbb{E}^{\tilde{Q}}[ S_{t+1} \mid \mathcal{F}_t] \big) \, d\mathbb{P} \geq 0.
\end{equation*}
Since $A \in \mathcal{F}_t$ is arbitrary, if $\lambda_t > 0$ it follows that
\begin{equation*}
S_t - \mathbb{E}^{\tilde{Q}}[S_{t+1} \mid \mathcal{F}_t] \geq 0 \quad \mathbb{P}\text{-a.s.},
\end{equation*}
and note that $\lambda_t \neq 0$ a.s., since the market is discounted and from \eqref{restriction final}, we obtain that $\lambda_T \geq 1$, and by \eqref{restriction lp} we obtain that $(\lambda_t)_{t=0}^T \geq 1$.
\end{proof}

The result obtained here is analogous to that in \cite{Dahl}. The primary distinction lies in the modeling perspective: while \cite{Dahl} formulates the problem from the viewpoint of the contingent claim's writer, the characterization in Theorem~\ref{thm:FTAP} is derived from the perspective of an investor seeking to maximize expected terminal wealth.

Furthermore, this characterization provides a bridge between our duality framework and the Fundamental Theorem of Asset Pricing. Because a no-short-selling constraint is imposed on the portfolio, the absence of arbitrage is known to be equivalent to the existence of at least one supermartingale measure (see \cite{pulido2014fundamental}). In our setting, these pricing measures are explicitly determined by the optimal dual variables of problem \eqref{dual problem lp}--\eqref{restriction final}.

\begin{remark}
The interiority condition $X_t^0 > 0$ almost surely for all $t \in \{0, \dots, T\}$ in Theorem~\ref{thm:FTAP} is superfluous if short selling of the risky assets is permitted. In an unconstrained market, the inequalities in the dual constraints \eqref{restriction lp} and \eqref{restriction final} tighten into equalities. Consequently, the normalized price process $\{S_t\}_{t=0}^T$ becomes a true $\tilde{Q}$-martingale, and the proof of Theorem~\ref{thm:FTAP} carries through directly upon replacing the respective inequalities with equalities.
\end{remark}

\section{Numerical examples} \label{sec:6}

\subsection{Failure of epi-convergence} \label{sec:counterexample}

In Section~\ref{sec:counterproposition}, we showed that the convergence of the underlying probability measures alone does not guarantee the stability of the associated Rockafellian functions, if other components of the market are perturbed at the same time. We now illustrate this phenomenon with an explicit example in which, although the perturbed probabilities converge to the original distribution, the corresponding optimal solutions fail to converge.

The perturbation consists of introducing catastrophic events with vanishing probabilities but sufficiently large asset prices to alter the investor's optimal decisions. Despite becoming increasingly unlikely, these events have a persistent effect on the objective function, causing the investor to switch from a fully risk--free portfolio to one invested entirely in the risky asset. This example provides a concrete illustration of the instability proved in Proposition~\ref{prop:no epi}.

Consider a market with two assets and horizon $T=2$. For $t\in\{0,1,2,3\}$, the random variables $\xi_t$ are i.i.d. and distribute as $\xi_t \sim \text{Ber}(\{-1,1\},p)$. The dynamics of the assets are the following
\begin{itemize}
    \item $S^0_t$: risk--free asset with dynamics $S^0_{t+1}=(1+r)S_t^0$.
    \item $S^1_t$: risky asset (stock) with dynamics $S_{t+1}^1(\xi_{0:t+1}) = S_t^1(\xi_{0:t}) + \mu + \sigma \xi_{t+1}$.
\end{itemize}

We consider the numerical parameters in Table~\ref{tab:model-parameters}.
\begin{table}[tbhp]
    \centering
    \caption{Model parameters for epi-convergence failure example (Ex.~\ref{sec:counterexample}).}
    \label{tab:model-parameters}
    \begin{tabular}{
        S[table-format=1.3]
        S[table-format=1.1]
        S[table-format=1.2]
        S[table-format=2.2]
        S[table-format=1.0]
        S[table-format=4.2]
        S[table-format=6.0]
        S[table-format=-5.0]
        S[table-format=-5.0]
    }
        \toprule
        {$r$} & {$p$} & {$\mu$} & {$\sigma$}
        & {$S_0^0$} & {$S_0^1$} & {$G_0$} & {$G_1$} & {$G_2$} \\
        \midrule
        0.005 & 0.5 & 9.91 & 91.03
        & 1 & 8041.14 & 100000 & -50000 & -50100 \\
        \bottomrule
    \end{tabular}
\end{table}

The stock-price tree generated by these parameters is shown in Figure~\ref{fig:stock-prices}.

\begin{figure}[t]
\centering
\includegraphics[width=.75\textwidth]{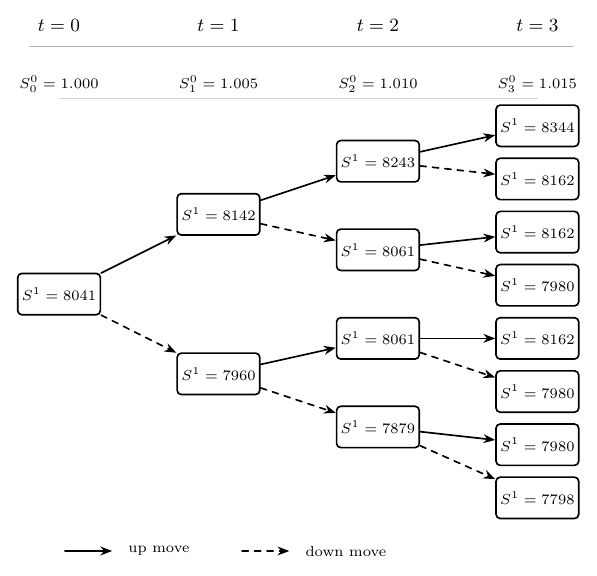}
\caption{Stock-price tree for the risky asset, together with the deterministic evolution of the risk--free asset. Solid branches denote up moves and dashed branches denote down moves.}
\label{fig:stock-prices}
\end{figure}

The solution to the investment problem, that is the minimizer of $\varphi(X) = - \mathbb{E} [\langle S_{T+1},X_T \rangle] + \iota_{\mathcal{X}}(X)$ is shown in Figure~\ref{fig:solution-example}. The results indicate  that the risky asset is unattractive and all the money is invested in the risk--free asset.

\begin{figure}[t]
\centering
\includegraphics[width=.75\textwidth]{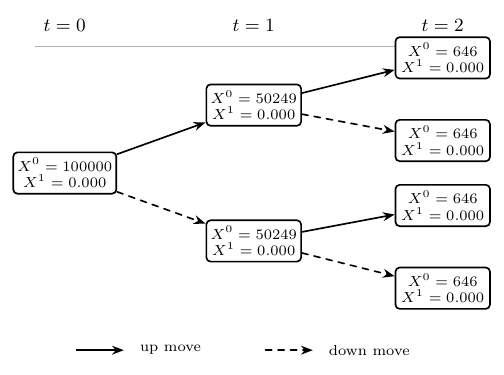}
\caption{Optimal allocation in the initial problem.}
\label{fig:solution-example}
\end{figure}

We introduce now a catastrophic event. From each node at time $t=2$, we add an additional successor node. If the probability of a parent node is $p_j$, the catastrophic successor is assigned probability $p_j q^\nu$. The remaining probability, $p_j(1-q^\nu)$, is distributed among the two original successors nodes proportionally to their original probabilities. At the catastrophic node, the value of the risky asset $S^1$ is multiplied by the factor $\kappa\nup = (1+ \kappa / q\nup)$, where $q\nup$ is any sequence converging to $0$ and $\kappa \geq 0.0075$. Figure~\ref{fig:perturbed-market} illustrates the resulting tree for $q^\nu = 0.001$ and $\kappa = 0.0075$.

\begin{figure}[t]
\centering
\includegraphics[width=.75\textwidth]{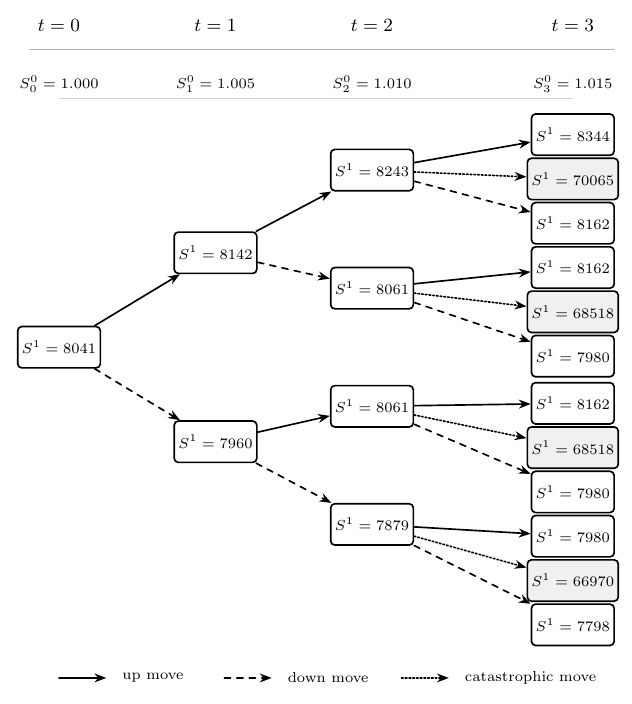}
\caption{Perturbed market tree for $q^\nu=0.001$ and $\kappa=0.0075$. Dotted branches denote catastrophic successors.}
\label{fig:perturbed-market}
\end{figure}

We define the perturbed problem as the one minimizing $\varphi\nup(X )=  -\mathbb{E}^{p\nup}[\langle S_{T+1},X_T \rangle] + \iota_{\mathcal{X}}(X)$, where $p^\nu$ is the probability as in Remark  \ref{rem:definition-p}, after the introduction of the catastrophic nodes. Observe that the `original' problem of minimizing $\varphi(X)$ is recovered if $q^\nu=0$, given the convention $0\cdot\infty=0$\footnote{To be more precise, the original problem is equivalent to the perturbed one with $q^\nu=0$.}. Therefore, the question of interest lies in the convergence of the minimizer $X^\nu$ of $\varphi\nup$ towards the minimizer $X$ of $\varphi$.

Before the perturbation, the coefficient of $X_T^1(j)$ in the objective function is
\[
\sum_{i\in j^+}\frac{1}{8}S_{T+1}^1(i)
=\frac{1}{4}\bigl(S_T^1(j)+\mu\bigr).
\]
After introducing the catastrophic nodes, this coefficient becomes
\[
\frac{1}{4}\bigl(S_T^1(j)(1+\kappa)+\mu\bigr)
+\frac{q^\nu\mu}{4}.
\]
Thus, the expected payoff associated with the risky asset increases at every node, creating an incentive to invest more heavily in it. Consequently, the optimal solutions differ significantly from those of the original problem, yielding the behavior shown in Figure~\ref{fig:solution-perturbation} for every $\nu\in\mathbb{N}$.

\begin{figure}[t]
\centering
\includegraphics[width=.72\textwidth]{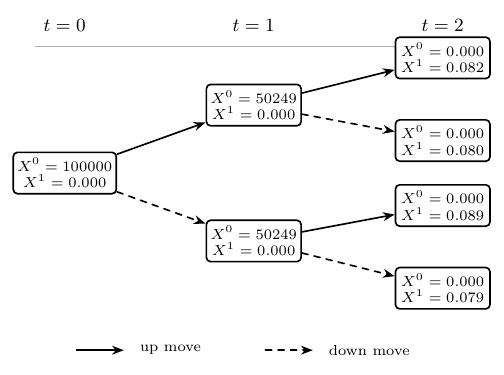}
\caption{Optimal allocation in the perturbed problem for any $\nu\in\mathbb{N}$.}
\label{fig:solution-perturbation}
\end{figure}

We emphasize that, although the perturbed market converges to the original one, the corresponding optimal solutions do not converge to an optimal solution of the original problem.

Since $\lim_{\nu\to\infty} q^\nu\kappa^\nu=\kappa>0$, it follows from Proposition~\ref{prop:no epi} that the Rockafellian associated with the perturbed problem does not epi-converge to the Rockafellian of the original problem. Moreover, in this example, the sequence of optimal solutions of the perturbed problems fails to converge to an optimal solution of the original problem.

This example highlights the limitations of the stability framework based on Rockafellian perturbations. By introducing catastrophic events with vanishing probabilities but unbounded stock prices, one obtains a sequence of optimization problems whose underlying market models converge to the original one, while their optimal solutions remain unstable. At the same time, the associated Rockafellian functions fail to be stable in the sense of epi-convergence.

\subsection{Dual stability}\label{ex:dual}

Now, we analyze the convergence of the dual values associated with claim perturbations defined in Section~\ref{sec:3.2}. To this end, we consider the same market model as before, with the parameters defined in Table~\ref{tab:model-parameters1}.

\begin{table}[tbhp]
    \centering
    \caption{Model parameters for the dual stability example (Ex.~\ref{ex:dual}).}
    \label{tab:model-parameters1}
    \begin{tabular}{
        S[table-format=1.1]
        S[table-format=3.2]
        S[table-format=2.2]
        S[table-format=1.0]
        S[table-format=4.2]
        S[table-format=6.0]
        S[table-format=-5.0]
        S[table-format=-5.0]
    }
        \toprule
        {$p$} & {$\mu$} & {$\sigma$}
        & {$S_0^0$} & {$S_0^1$}
        & {$G_0$} & {$G_1$} & {$G_2$} \\
        \midrule
        0.5 & 109.91 & 91.03
        & 1 & 8041.14
        & 100000 & -50000 & -50100 \\
        \bottomrule
    \end{tabular}
\end{table}

As in Section~\ref{sec:3}, we consider a critical event in which the value of the risky asset is multiplied by $k = 0.5$ with probability $q\nup$.

For this example, we consider the values
$q\nup \in \left\{0,\;10^{-2},\;10^{-3},\;10^{-4},\;10^{-5}\right\}.$ and with $q\nup=0$ we recover the original case.

The corresponding dual solutions are depicted in Figure~\ref{fig:dual-solution-convergence}.  In this example, the approximate dual solutions converge to the original dual solution, as illustrated in Figure~\ref{fig:dual-solution-convergence}, by Remark~\ref{rem:convergencia de soluciones} implies that
$\sup \psi_2^\nu \longrightarrow \sup \psi_2.$ As a consequence, the dual problem is stable with respect to critical event, recovering the original solution when the approximate probabilities converge to the original probability.

\begin{figure}[t]
\centering
\includegraphics[width=0.95\textwidth]{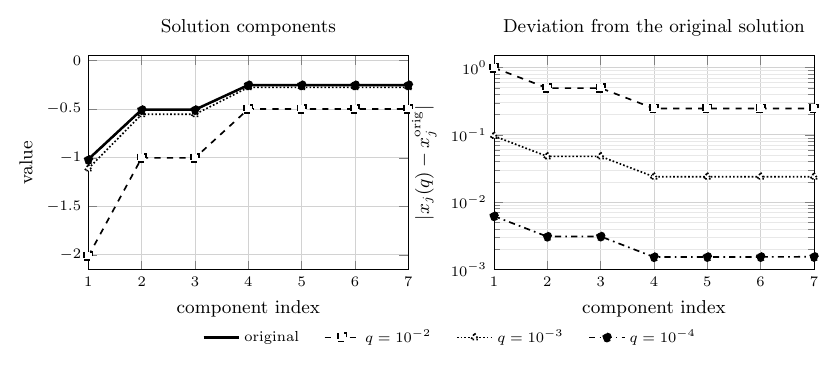}
\caption{Convergence of the perturbed solution to the original solution. The left panel reports the seven solution components for the original model and for perturbation levels $q\in\{10^{-2},10^{-3},10^{-4}\}$. The right panel reports the componentwise absolute deviation from the original solution on a logarithmic scale.}
\label{fig:dual-solution-convergence}
\end{figure}

\section{Conclusions and further research}

In this work, we provide a new framework for analyzing the stability and robustness of discrete-time contingent-claim problems over a finite horizon. We use Rockafellian perturbations to embed the wealth maximization problem of the writer into a family of perturbed models and show the stability of this class of perturbations. We provided theoretical guarantees for the epi-convergence of tilted Rockafellians and the hypo-convergence of dual functions, ensuring that dual approximate solutions converge to the real solution under conditions of tightness and convergence of probability. The dual analysis provides a rich economic interpretation of the multipliers by viewing them as shadow prices, while also establishing a connection with the Fundamental Theorem of Asset Pricing (FTAP).

Future work has different edges. The first potential line of research is on relaxing the no-short-selling condition. The no-short-selling condition was enforced to ensure the tightness of approximating tilted Rockafellians and in order for the dual problem to be well defined. Exploring alternate regularity conditions to accommodate short positions without inducing model collapse would significantly enlarge the variety of solvable problems. The second line consists of extending this framework to a continuous-time setting, transitioning from finite discrete support to prices driven by semimartingales. Finally, a further avenue for future research is to consider alternative objective functions incorporating risk aversion, such as the Conditional Value-at-Risk (CVaR) or expected gain-loss criteria.
\appendix
\section{Calculations for the probability-perturbation dual}\label{A}

Case \eqref{case:1.1}:  Let $X \in \mathcal{X}$ and $p+u \in [0,+ \infty[^m$, in other case, the Lagrangian $l(X,y)= \infty$ and let $\theta = 0$, then
\begin{align*}
    \inf_{u} f(u,X) - \langle u,y\rangle &= \inf_u \sum_{j=1}^m (p_j+u_j) \langle-S_{T+1}(j),X_T(j^-)\rangle -u_jy_j \\
    &= \inf_u \sum_{j=1}^m p_j \langle-S_{T+1}(j),X_T(j^-)\rangle +u_j(\langle-S_{T+1}(j),X_T(j^-)\rangle -y_j)
\end{align*}

Consequently, if $\langle-S_{T+1}(j),X_T(j^-)\rangle -y_j <0$ for any $j$, then taking $u_j \rightarrow \infty$, we have $h_j(X)= -\infty$, and if $\langle-S_{T+1}(j),X_T(j^-)\rangle -y_j \geq 0$ we take $u_j=-p_j$ and $h_j(X,y)= p_jy_j$. Then

$$h_j(X,y) = \begin{cases}
    p_jy_j \text{, if }\langle-S_{T+1}(j),X_T(j^-)\rangle -y_j \geq 0 \\
    -\infty \text{, in other case}
\end{cases}$$

Case \eqref{case:2.1}: In the case, when $\langle-S_{T+1}(j),X_T(j^-)\rangle -y_j <0$, the minimum is reached at $u=0$, and
$$h_j(X,y) = \begin{cases}
    p_jy_j, &\text{ if }\langle-S_{T+1}(j),X_T(j^-)\rangle -y_j \geq 0 \\
     p_j\langle-S_{T+1}(j),X_T(j^-)\rangle, &\text{ in other case}
\end{cases}$$

Case \eqref{case:1.2}
\begin{align*}
    \inf_{u} f(u,X) - \langle u,y\rangle &= \inf_{-p \leq u} \sum_{j=1}^m (p_j+u_j) \langle-S_{T+1}(j),X_T(j^-)\rangle -u_jy_j + \frac{1}{2}\theta u_j^2 \\
    &= \inf_{-p \leq u} \sum_{j=1}^m p_j \langle-S_{T+1}(j),X_T(j^-)\rangle +u_j(\langle-S_{T+1}(j),X_T(j^-)\rangle -y_j) + \frac{1}{2} \theta u_j^2
\end{align*}

Let $b_j := \langle-S_{T+1}(j),X_T(j^-)\rangle -y_j$, then

If $\frac{-b_j}{\theta} \geq -p_j$, then $u_j=\frac{-b_j}{\theta}$ and $h_j(X,y)= p_j\langle-S_{T+1}(j), X_T(j^-)\rangle - \frac{1}{2\theta} (y_j - \langle -S_{T+1}(j), X_T(j^-)\rangle)^2$.

If $\frac{-b_j}{\theta} < -p_j$, the minimum is attained at $u_j = -p_j$ and $h_j(X,y)= \frac{1}{2} \theta p_j^2 + y_jp_j$

Then
$$h_j(X,y) = \begin{cases}
    \frac{1}{2} \theta p_j^2 + y_jp_j, & \text{if } \langle-S_{T+1}(j), X_T(j^-)\rangle \in ]y_j+ \theta p_j, +\infty] \\[1ex]
    \begin{aligned}
        & p_j\langle-S_{T+1}(j), X_T(j^-)\rangle \\
        & - \frac{1}{2\theta} (y_j + \langle S_{T+1}(j), X_T(j^-)\rangle)^2,
    \end{aligned} & \text{if } \langle-S_{T+1}(j), X_T(j^-)\rangle \in ]-\infty, y_j + \theta p_j]
\end{cases}$$

Case~\eqref{case:2.2}

If $-p_j \leq \frac{-b_j}{\theta} \leq 0$, $h_j(X)=p_j\langle-S_{T+1}(j), X_T(j^-)\rangle - \frac{1}{2\theta} (y_j - \langle -S_{T+1}(j), X_T(j^-)\rangle )^2$, the case $\frac{-b_j}{\theta} <-p_j$ is the same as the previous one, and finally the case $\frac{-b_j}{\theta} >0$ is minimized by $u=0$ and $h_j(X)=p_j \langle-S_{T+1}(j), X_T(j^-)\rangle $

Then

$$h_j(X,y) = \begin{cases}
    \frac{1}{2} \theta p_j^2 + y_jp_j, & \text{if } \langle-S_{T+1}(j), X_T(j^-)\rangle \in ]y_j+ \theta p_j, +\infty] \\[1.5ex]
    \begin{aligned}
        & p_j\langle-S_{T+1}(j), X_T(j^-)\rangle \\
        & - \frac{1}{2\theta} (y_j + \langle S_{T+1}(j), X_T(j^-)\rangle)^2,
    \end{aligned} & \text{if } \langle-S_{T+1}(j), X_T(j^-)\rangle \in [y_j, y_j + \theta p_j] \\[1.5ex]
    p_j \langle-S_{T+1}(j), X_T(j^-)\rangle, & \text{if } \langle-S_{T+1}(j), X_T(j^-)\rangle \in ]-\infty, y_j[
\end{cases}$$

\newpage
\bibliographystyle{plain}
\bibliography{ref}

\end{document}